\documentclass[12pt]{amsart}

\usepackage{graphicx}
\usepackage{subfigure}
\usepackage{eepic}
\usepackage{xcolor}
\usepackage{tikz}
\usepackage{amsmath}
\usepackage{a4wide}
\usepackage[utf8]{inputenc}
\usepackage{amssymb}
\usepackage{amsopn}
\usepackage{epsfig}
\usepackage{amsfonts}
\usepackage{latexsym}
\usepackage{amsthm}
\usepackage{enumerate}
\usepackage[UKenglish]{babel}
\usepackage{verbatim}
\usepackage{color}


\newtheorem{theorem}{Theorem}[section]
\newtheorem{lemma}[theorem]{Lemma}
\newtheorem{proposition}[theorem]{Proposition}

\theoremstyle{definition}

\newtheorem{example}[theorem]{Example}

\theoremstyle{remark}

\numberwithin{equation}{section}

\newcommand{\R}{\ensuremath{\mathbb{R}}}

\renewcommand{\a}{\mathbf{a}}
\renewcommand{\b}{\mathbf{b}}

\newcommand{\w}{\mathbf{w}}
\newcommand{\W}{\mathbf{W}}

\newcommand{\D}{\mathcal{D}}
\newcommand{\set}[1]{\left\{#1\right\}}

\newcommand{\la}{\lambda}

\newcommand{\p}{\mathbf{p}}
\newcommand{\e}{\mathbf{e}}
\newcommand{\x}{\mathbf{x}}

\newcommand{\y}{\mathbf{y}}

\renewcommand{\L}{\mathcal{L}}

\begin{document}

\title[On the complexity of the set of codings]{On the complexity of the set of codings for self-similar sets and a variation on the construction of Champernowne.}

\author{Simon Baker}
\address{Mathematics institute, University of Warwick, Coventry, CV4 7AL, UK}
\email{simonbaker412@gmail.com}
\author{Derong Kong}
\address{College of Mathematics and Statistics, Chongqing University, 401331, Chongqing, P.R.China}
\email{{derongkong@126.com}}

\date{\today}

\subjclass[2010]{Primary 28A80; Secondary 11K16, 11K55}

\begin{abstract}
Let $F=\{\p_0,\ldots,\p_n\}$ be a collection of points in $\mathbb{R}^d.$ The set $F$ naturally gives rise to a family of iterated function systems consisting of contractions of the form $$S_i(\x)=\lambda \x +(1-\lambda)\p_i,$$ 
 where $\lambda \in(0,1)$. Given $F$ and $\lambda$ it is well known that there exists a unique non-empty compact set $X$ satisfying $X=\cup_{i=0}^n S_i(X)$. For each $\x \in X$ there exists a sequence $\a\in\{0,\ldots,n\}^{\mathbb{N}}$ satisfying $$\x=\lim_{j\to\infty}(S_{a_1}\circ \cdots \circ S_{a_j})(\mathbf{0}).$$ We call such a sequence a coding of $\x$. In this paper we prove that for any $F$ and $k \in\mathbb{N},$ there exists $\delta_k(F)>0$ such that if $\lambda\in(1-\delta_k(F),1),$ then every point in the interior of $X$ has a coding which is $k$-simply normal. Similarly, we prove that there exists $\delta_{uni}(F)>0$ such that if $\lambda\in(1-\delta_{uni}(F),1),$ then every point in the interior of $X$ has a coding containing all finite words. For some specific choices of $F$ we obtain lower bounds for $\delta_k(F)$ and $\delta_{uni}(F)$. We also prove some weaker statements that hold in the more general setting when the similarities in our iterated function systems exhibit different rates of contraction.  Our proofs rely on a variation of a well known construction of a normal number due to Champernowne, and an approach introduced by Erd\H{o}s and Komornik. 
 

\end{abstract}

\keywords{Expansions in non-integer bases, Digit frequencies.}
\maketitle

\section{Introduction}\label{sec:1}

A map $S:\mathbb{R}^d\to \mathbb{R}^d$ is called a contracting similitude if there exists $\lambda\in(0,1)$ such that $|S(\x)-S(\y)|=\la |\x-\y|$ for all $\x,\y\in \mathbb{R}^d$. We call a finite set of contracting similitudes an iterated function system or IFS for short. A well known result due to Hutchinson \cite{Hut} states that given an IFS $\Phi:=\{S_i\}_{i=0}^n,$ then there exists a unique non-empty compact set $X\subset \R^d$ satisfying $$X=\bigcup_{i=0}^n S_i(X).$$ 
We call $X$ the \emph{self-similar set} generated by $\Phi$. Many of the most well known examples of fractal sets are self-similar sets. For example the middle third Cantor set and the von Koch snowflake can be realised as self-similar sets for appropriate choices of iterated function systems (see \cite{Fal}).

When the images $\{S_i(X)\}_{i=0}^n$ are disjoint or have controlled overlaps, much is known about the properties of the attractor $X$ (see \cite{Fal}). Much less is known when the images $\{S_i(X)\}_{i=0}^n$ overlap significantly. One of the most important problems in Fractal Geometry is to describe the properties of $X,$ and measures supported on $X,$ when the $\{S_i(X)\}_{i=0}^n$ overlap significantly (see \cite{Hochman,Hochman2} and the references therein). To make progress with this problem it is often convenient to view $X$ as the image of a sequence space under a particular projection map. To avoid cumbersome notation, in what follows we will regularly adopt the convention: 
\[\D:=\{0,\ldots,n\}, \quad\D^*:=\bigcup_{j=0}^{\infty}\D^j,\quad \textrm{and}\quad \D^{\mathbb{N}}:=\{0,\ldots,n\}^{\mathbb{N}},\] where $\D^0$ consists of the empty word. We typically use $\a,$ $\mathbf{b}$ to denote an element of $\D^*$ or $\D^{\mathbb{N}}$. When we want to emphasise the digits appearing in $\a$ we use $(a_j)_{j=1}^{\infty}.$ Let $\pi:\D^{\mathbb{N}}\to X$ be  defined as follows: $$\pi(\a):=\lim_{j\to\infty} (S_{a_1}\circ \cdots \circ S_{a_j})(\mathbf{0}).$$ $\pi$ is the aforementioned projection map. Equipping $\D^{\mathbb{N}}$ with the product topology it can be shown that $\pi$ is continuous and surjective. Given $\x \in X$ we call a sequence $\a\in\D^{\mathbb{N}}$ a coding of $\x$ if $\pi(\a)=\x.$ In what follows we let $$\Sigma_{\Phi}(\x):=\{\a\in\D^{\mathbb{N}}:\pi(\a)=\x\}.$$ When the elements of the set $\{S_i(X)\}_{i\in \D}$ are well separated, then typically an $\x\in X$ will have a unique coding and so the set $\Sigma_{\Phi}(\x)$ does not exhibit any interesting behaviour. However, when the images $\{S_i(X)\}_{i\in \D}$ overlap significantly it can be the case that for a typical $\x$ the set of codings will be a large and complicated set. It is possible for $\Sigma_{\Phi}(\x)$ to be uncountable and even have positive Hausdorff dimension when $\D^{\mathbb{N}}$ is equipped with some reasonable metric (see \cite{BakG,Bak4,Bak3,Sid2,Sid3}). As a heuristic, it is reasonable to say that the more an IFS overlaps the larger the set $\Sigma_{\Phi}(\x)$ will be for a typical $\x,$ and vice-versa. As such the set of codings are important in the study of self-similar sets because their size provides a quantitative description of how an IFS overlaps. For more on this phenomenon and some analysis where this heuristic correspondence is made precise, we refer the reader to \cite{DK,FengHu,FS,Fur,Kem,Kem2}. These papers also demonstrate the important role the set of codings plays in the study of self-similar measures.

In this paper we study the combinatorial properties of the set of codings. We are motivated by the following general question. Suppose we are interested in a particular property of sequences in $\D^{\mathbb{N}},$ if our IFS overlaps sufficiently, does it guarantee that for a typical $\x\in X$ there will exist $\a\in \Sigma_{\Phi}(\x)$ satisfying this property? An affirmative answer to this question seems reasonable, since by the above heuristic, the more an IFS overlaps the larger we should expect $\Sigma_{\Phi}(\x)$ to be, and so we should expect a greater variety of sequences to appear within $\Sigma_{\Phi}(\x)$. Versions of this question were studied previously in \cite{BakD,BakE,BakKong,DKK,Gun,HS}. In \cite{Gun} some interesting connections were made between this problem and problems arising from analogue to digital conversion with background noise. In this paper we focus on the following two properties which measure the complexity of sequences.

Given $\b\in\D^k$ and $\a\in \D^{\mathbb{N}},$ we define the $\mathbf{b}$-frequency of $\a$ to be $$\textrm{freq}_{\b}(\a):=\lim_{m\to\infty}\frac{\#\{1\leq j \leq m:a_j\cdots a_{j+k-1}=\b\}}{m},$$ whenever the limit exists. Given $k\in \mathbb{N}$ we say that $\a$ is \emph{$k$-simply normal} if $\textrm{freq}_{\mathbf{b}}(\a)=(n+1)^{-k}$ for all $\mathbf{b}\in\D^k$. Essentially a sequence $\a$ is $k$-simply normal if each word of length $k$ occurs within $\a$ with the same likelihood. We emphasise at this point that $\D:=\{0,\ldots,n\}$ and so consists of $n+1$ digits. In this paper we study the following set: $$X_k:=\{\x\in X: \Sigma_{\Phi}(\x) \textrm{ contains a } k\textrm{-simply normal sequence}\}.$$ Another notion which describes the complexity of a sequence is that of universality. We call a sequence $\a\in \D^{\mathbb{N}}$ \emph{universal} if each element of $\D^{*}$ appears in $\a$, i.e., $\a$ contains all finite words. We will also study the set 
$$X_{uni}:=\{\x\in X:\Sigma_{\Phi}(\x) \textrm{ contains a universal sequence}\}.$$ Universal codings were originally introduced by Erd\H{o}s and Komornik in \cite{EK} in the setting of expansions in non-integer bases. For codings of self-similar sets they were studied by the first author in \cite{Bak4}.
 
The topic of digit frequencies and the complexity of codings is classical. It has strong connections with Ergodic Theory, Fractal Geometry, and Transcendental Number Theory. It has its origins in the pioneering work of Borel \cite{Bor} and Eggleston \cite{Egg}. For some more recent contributions on this topic we refer the reader to \cite{AB,BCH,HocShm} and the references therein. What distinguishes this work from much of what has appeared previously is the fact we are working in a setting where an $\x$ may have many codings.

In this paper we study the sets $X_k$ and $X_{uni}$ for the following parameterised families of IFSs. Given a set  $F:=\{\p_i\}_{i\in \D}$ consisting of vectors in $\mathbb{R}^d$, one can define a family of IFSs by defining for each $i\in D$ the similitude
\begin{equation}
\label{similarity}
S_i(\x)=\lambda_i \x +(1-\lambda_i)\p_i,
\end{equation} where for each $i\in \D$ we have $\lambda_i\in(0,1)$. Hiding the dependence upon $F$ and the contraction ratios we let $\Phi=\{S_i\}_{i\in \D}$ denote the IFS generated by these similitudes. In what follows, unless specified, we will always assume that $\Phi$ is an IFS consisting of similarities of the form given by \eqref{similarity}. We will also always have the underlying assumption that 
$F$ is not contained in a  $(d-1)$-dimensional affine subspace of $\mathbb{R}^d.$ 
If $F$ was contained in such a subspace then we could project to a lower dimensional Euclidean space where such a condition held. As such there is no loss of generality. If there exists $\lambda\in(0,1)$ such that $\lambda_i=\lambda$ for all $i\in \D$ we say that $\Phi$ is \emph{homogeneous.} We refer to the elements of $F$ as the fixed points of our IFS.

The following theorems are the main results of this paper. 

\begin{theorem}
	\label{k normal theorem}
	For any $F$ and $k\in\mathbb{N}$ there exists $\delta_{k}:=\delta_k(F)>0,$ such that if $\Phi$ is homogeneous and $\lambda\in(1-\delta_k,1),$ then $X_{k}=int(X)$.
	\end{theorem}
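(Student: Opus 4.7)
My plan is to combine a Champernowne-type construction of a $k$-simply normal infinite word with an Erd\H{o}s--Komornik style coding argument that exploits the massive cylinder overlap available when $\lambda$ is close to $1$. Throughout, for $\a \in \D^*$ I write $T_\a := S_{a_1}\circ\cdots\circ S_{a_{|\a|}}$; a finite word $\a$ is a valid prefix of a coding of $\x$ exactly when $T_\a^{-1}(\x) \in X$, and any infinite coding of $\x$ is the limit of a nested sequence of valid prefixes.

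\textbf{Champernowne template.} First, for each $N \geq 1$, I let $\w_N \in \D^{N(n+1)^N}$ be the concatenation of every element of $\D^N$ in some fixed order, and set $\W = \w_1 \w_2 \w_3 \cdots$. A routine count---each $\b \in \D^k$ occurs at $(N-k+1)(n+1)^{N-k}$ within-block positions of $\w_N$ and at most $O((n+1)^N)$ further junction positions---shows that the frequency of $\b$ in $\w_1\cdots\w_N$ equals $(n+1)^{-k} + O(1/N)$. Hence $\W$ is $k$-simply normal, and more generally any sequence agreeing with $\W$ on a set of positions of density $1$ remains $k$-simply normal.

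\textbf{Inductive coding of $\x$.} Fix $\x \in \operatorname{int}(X)$, set $r := \tfrac{1}{2}\operatorname{dist}(\x,\partial X)$ and let $Y_r := \{\y \in X : \operatorname{dist}(\y,\partial X) \geq r\}$. For $\delta_k$ small enough one has $\x \in Y_r$ uniformly for $\lambda \in (1-\delta_k,1)$, since $X$ varies continuously with $\lambda$ and converges in Hausdorff metric to $\operatorname{conv}(F)$ as $\lambda \uparrow 1$ (which has non-empty interior by the spanning hypothesis on $F$). Choose any initial valid prefix $\a^{(0)}$ with $T_{\a^{(0)}}^{-1}(\x) \in Y_r$, and define inductively $\a^{(j)} = \a^{(j-1)} \widetilde{\w}_j \c_j$, where $\widetilde{\w}_j$ is obtained from $\w_j$ by altering only $o(|\w_j|)$ digits so that $\a^{(j-1)} \widetilde{\w}_j$ remains valid at every intermediate stage, and $\c_j \in \D^{\leq M}$ is a bounded-length correction that restores the tail state to $Y_r$. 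The limit $\a = \lim_j \a^{(j)}$ is then a coding of $\x$.

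\textbf{Key lemma and main obstacle.} The technical engine of the whole scheme is a uniform short-correction/modification lemma: there exist $M \in \mathbb{N}$ and $\delta_k > 0$ (depending only on $F$, $k$, $r$) such that, for every $\lambda \in (1-\delta_k,1)$, every valid tail state $\y \in Y_r$, and every long word $\w$, one can alter $\w$ in at most $o(|\w|)$ positions to obtain $\widetilde{\w}$ whose successive tail states all lie in $X$, and then find $\c \in \D^{\leq M}$ with the final tail state in $Y_r$; in particular $\bigcup_{|\c|\leq M}T_\c(Y_r) = X$. Granting this, the lengths $|\w_j| = j(n+1)^j$ dwarf the total number of altered and correction positions accumulated through stage $j$, so these positions have asymptotic density zero in $\a$, and the Champernowne counts carry over to give $\operatorname{freq}_\b(\a) = (n+1)^{-k}$ for every $\b \in \D^k$; thus $\a$ is a $k$-simply normal coding of $\x$, proving $\operatorname{int}(X) \subset X_k$. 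The reverse inclusion $X_k \subset \operatorname{int}(X)$ follows from a short argument that codings of points in $\partial X$ are too rigid (concentrating at a single fixed point) to admit uniform block frequencies. The hardest part is the short-correction/modification lemma: establishing it uniformly in $\lambda$ and in the tail state requires a quantitative description of how the cylinder structure of $X$ fills out $\operatorname{conv}(F)$ as $\lambda \uparrow 1$, and is where the homogeneity of $\Phi$ and the full-dimensionality of $F$ play a decisive role.
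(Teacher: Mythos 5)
There is a genuine gap: your ``uniform short-correction/modification lemma'' is exactly where the entire difficulty lives, and you leave it unproven. The claim that a long word $\w$ can always be altered in only $o(|\w|)$ positions so that all intermediate tail states stay in $X$ is far from obvious --- modifications propagate forward (changing one digit changes every subsequent tail state), so there is no a priori reason the number of forced alterations stays sublinear, and establishing this uniformly in $\lambda$ and in the tail state would itself be the whole theorem. You also need the second half of the lemma, $\bigcup_{|\c|\le M} T_\c(Y_r) = X$, uniformly in $\lambda$; this compactness-type step is plausible but again not trivial when $\lambda$ and hence $X$ vary. As written, your plan reduces Theorem \ref{k normal theorem} to a harder, unestablished statement.

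The paper takes a route that sidesteps the modification problem entirely. Instead of the long Champernowne word $\w_1\w_2\w_3\cdots$, it builds $n+1$ \emph{fixed-length} blocks $\W_0,\dots,\W_n$, each of length $k(n+1)^k$ and each a cyclic rotation of the lexicographic list of all length-$k$ words prefixed so as to begin with $0^{k-1}$. Lemma \ref{missing zeros} shows every infinite concatenation from $\{\W_0,\dots,\W_n\}^{\mathbb N}$ is already $k$-simply normal --- no digit ever needs to be altered. Crucially, the maps $S_{\W_0},\dots,S_{\W_n}$ form a sub-IFS $\Phi_k$ of the same homogeneous form, with a common contraction ratio, so for $\lambda$ close to $1$ its attractor $X_{\Phi_k}$ equals the convex hull of its fixed points and has non-empty interior (Lemma \ref{no holes lemma 2}, Proposition \ref{generic k}). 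Thus $X_k$ contains the dense open set $\bigcup_\a S_\a(\operatorname{int}(X_{\Phi_k}))$ with no corrections required. The remaining work --- and it is substantial --- is Theorem \ref{universal theorem}: every interior point has a \emph{universal} coding, proved by an Erd\H{o}s--Komornik argument using Feng's theorem on $L_n(q)$ together with delicate Carath\'eodory-type geometry. Given a universal coding of $\x$, one applies $T_{a_1\cdots a_j}$ to land in the dense open set, and the $k$-simply normal tail takes over. In short, your plan bundles the normality engineering and the ``navigate from $\x$'' step into a single unproven lemma, whereas the paper decouples them: a modification-free sub-IFS handles normality, and the universal-coding theorem handles navigation.
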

\begin{theorem}
	\label{universal theorem}
	For any $F$ there exists $\delta_{uni}:=\delta_{uni}(F)>0,$ such that if $\Phi$ is homogeneous and $\lambda\in(1-\delta_{uni},1),$ then $X_{uni}=int(X)$.
\end{theorem}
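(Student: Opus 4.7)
The plan is to construct, for each $x \in \mathrm{int}(X)$, a coding of $x$ in which every finite word occurs as a subword, by extending a prefix one block at a time. First I would enumerate $\D^* = \{w^{(1)}, w^{(2)}, \dots\}$ and inductively build a nested sequence of words $v_0, v_1, v_2, \dots \in \D^*$, each a prefix of the next, satisfying the invariants (i) $x \in S_{v_k}(X)$, so that $v_k$ is a prefix of some coding of $x$; (ii) each of $w^{(1)},\dots,w^{(k)}$ appears as a subword of $v_k$; and (iii) the state $y_k := (S_{v_k})^{-1}(x) \in X$ lies in a fixed open ``safe region'' $U \subset \mathrm{int}(X)$. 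Because $\mathrm{diam}(S_{v_k}(X)) = \lambda^{|v_k|}\,\mathrm{diam}(X) \to 0$, the infinite word $\a := \lim_{k} v_k \in \D^{\mathbb{N}}$ is a coding of $x$ by (i), and (ii) makes it universal.

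The central step is to prove an \emph{insertion lemma}: for $\lambda$ close enough to $1$ there exists a nonempty open $U \subset \mathrm{int}(X)$ such that for every word $w \in \D^*$ and every $y \in U$ one can find $u \in \D^*$, of length bounded in terms of $|w|$ alone, containing $w$ as a subword and satisfying both $y \in S_u(X)$ and $(S_u)^{-1}(y) \in U$. Granted this lemma, at stage $k$ one applies it with $y = y_k$ and $w = w^{(k+1)}$ and sets $v_{k+1} = v_k u$; all three invariants are preserved. The target shape for $u$ is $\beta w \gamma$, where $\beta$ is a correction prefix ensuring $y \in S_{\beta w}(X)$, and $\gamma$ is a short suffix steering the new state $(S_u)^{-1}(y)$ back into $U$.

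The existence of the correction $\beta$ should come from the heavy overlap of $\Phi$ when $\lambda$ is close to $1$. The cylinders $\{S_\beta(X):\beta\in\D^m\}$ cover $X$ with large redundancy, and the fixed points $p_\beta$ of $S_\beta$ form a net of density $O(\lambda^m)$ in the convex hull of $F$. Since $F$ is not contained in a proper affine subspace of $\mathbb{R}^d$, these fixed points spread out in all directions, and for $\lambda$ close enough to $1$, the union $\bigcup_{\beta\in\D^m} S_{\beta w}(X)$ covers a uniform neighborhood of every point of $U$ once $m$ is chosen as a suitable function of $|w|$. This is a variation of Champernowne's construction: rather than concatenating all words of each length into a single normal number, I would concatenate ``approach $+$ target $+$ return'' triples tracking the fixed target $x$, with the correction $\beta$ playing the role of Champernowne's padding and the Erd\H{o}s--Komornik method supplying the device that keeps the state in the safe region.

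The main obstacle will be the quantitative form of the geometric covering statement in the previous paragraph: one must identify a uniform open $U \subset \mathrm{int}(X)$ and show that for every $w$ there is an explicit $m = m(|w|)$ for which $\bigcup_{\beta\in\D^m} S_{\beta w}(X)$ covers $U$ with a definite margin, and moreover that the return word $\gamma$ can be kept short enough to bring $(S_u)^{-1}(y)$ into $U$. The homogeneity hypothesis is essential here because it makes all level-$m$ cylinders $S_\beta(X)$ similar copies at the common scale $\lambda^m$, allowing the density of the fixed points $\{p_\beta\}$ to be converted into a uniform covering. The constant $\delta_{uni}(F)$ is then defined as the threshold on $\lambda$ beyond which this geometric lemma holds, and the theorem follows from the inductive construction outlined above.
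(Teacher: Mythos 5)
Your high-level plan — enumerate $\D^*$, inductively extend a prefix of the coding by a block $\beta\,w^{(k)}\,\gamma$ that inserts the next target word while returning the ``state'' to a fixed safe region — does match the skeleton of the paper's Proposition \ref{d+1 prop}. But there are two substantial gaps between the sketch and a proof.

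First, the covering claim that drives the insertion lemma is asserted, not proved, and the heuristic you offer for it is false. You claim that because the cylinders $\{S_\beta(X):\beta\in\D^m\}$ overlap heavily, the fixed points $\pi(\beta^\infty)$ form an $O(\lambda^m)$-net, so $\bigcup_{\beta\in\D^m}S_{\beta w}(X)$ covers a neighbourhood of every $y\in U$. Heavy overlap of the cylinders does not give this: the quantity that controls the gaps in $\bigcup_\beta S_\beta(\cdot)$ at scale $\lambda^m$ is essentially the separation of the finite-sum set $Z^d(\lambda)=\{(1-\lambda)\sum_j\e_{a_j}\lambda^{-j}\}$, i.e.\ the quantity $L_n(q)$ of Erd\H{o}s--Komornik, and this can be bounded away from zero when $\lambda^{-1}$ (or a power of it) is a Pisot number. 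The paper does not escape this by a soft overlap argument; it invokes Feng's theorem (Theorem \ref{Feng thm}) that $L_n(q)=0$ for $q\in(1,\sqrt{1.3247\ldots})$, which is what makes the approximation in Lemma \ref{half plane dense} possible and forces $\delta_{uni}$ to be small enough that no Pisot obstruction arises. You name Erd\H{o}s--Komornik in passing, but you treat it as a device for the return step $\gamma$, whereas it is the entire content of finding the correction $\beta$. Without it the insertion lemma simply is not established.

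Second, and more structurally, you state the insertion lemma directly for the full alphabet $\D$ and an arbitrary $F$, but the density machinery the paper actually proves (Lemma \ref{half plane dense}) is only for the normalised $(d+1)$-map IFS with fixed points $\e_0,\ldots,\e_d$, because the proof decomposes $Z^d(\lambda)$ coordinate-wise into $d$ copies of a one-dimensional sum set and applies Feng's theorem to each. Proposition \ref{d+1 prop} therefore only handles $\#F=d+1$. Passing from this to general $F$ is not a formality: when $\#F>d+1$ a point $\x\in int(X)$ need not lie in $int(\textrm{conv}(B))$ for any $(d+1)$-subset $B$ of $F$ (the paper's own example $F=\{(0,0),(1,0),(0,1),(1,1)\}$, $\x=(1/2,1/2)$ shows this), so you cannot simply restrict to a simplex and apply the $(d+1)$-map result. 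The paper handles this with a strengthened Carath\'eodory lemma (Lemma \ref{strong caratheodory}) and an induction on dimension in the proof of Theorem \ref{universal theorem} that carefully pushes the orbit off lower-dimensional faces; this occupies a large part of Section \ref{proofs} and has no counterpart in your outline. If you want to avoid that reduction you would need to prove a version of the density lemma for arbitrary $F$, which the paper's coordinate-decomposition argument does not give and which you would have to supply.
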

Here and hereafter we let $int(X)$ denote the interior of $X$. Theorems \ref{k normal theorem} and   \ref{universal theorem} are both existence results. In Section \ref{final discussion} we obtain explicit lower bounds for $\delta_k$ and $\delta_{uni}$ for certain classes of $F$. In particular when $d=1$ we obtain an explicit lower bound for $\delta_k$.

Earlier work on this topic appeared in \cite{BakD,BakE,BakKong}. In \cite{BakD,BakKong} we studied a family of homogeneous IFSs acting on $\mathbb{R}$ for which $X$ was an interval. Amongst other results we determined the optimal set of $\lambda$ for which we have $int(X)=X_1$. In \cite{BakE} the first author studied a more general family of homogeneous IFSs acting on $\mathbb{R}$. In this paper he showed that for any $\x\in int(X)$ the set of vectors $\{(\textrm{freq}_i(\a))_{i\in \D}: \a\in\Sigma_{\Phi}(\x)\}$ filled out the simplex of probability vectors on $n+1$ digits in a uniform way as $\lambda$ approached $1$.

At this point we contrast the arguments used in this paper with the arguments used in \cite{BakD,BakE,BakKong}. The arguments used in \cite{BakD,BakE,BakKong} made use of the obvious fact that if in a sequence $\a$ it is the case that $a_j=b$, then this does not impose any restrictions on the adjacent digits appearing within $\a$. This made controlling the quantity $\#\{1\leq j\leq m:a_j=b\}$ reasonably straightforward for certain codings that were constructed. Such a property does not hold for longer blocks. If $a_j\cdots a_{j+k-1}=\b$ for $k\geq 2$, then this clearly imposes some restrictions on what blocks of length $k$ can occur nearby. Consequently, the methods of \cite{BakD,BakE,BakKong} do not allow us to construct codings over which we have sufficient control over the quantity $\#\{1\leq j \leq m:a_j\cdots a_{j+k-1}=\b\}.$ In \cite{BakD,BakE,BakKong} we also made use of some dynamical arguments. These arguments were particularly effective because for the IFSs we were studying the corresponding self-similar set was an interval,  and so the geometry in this case was particularly simple. Working in an arbitrary Euclidean space we no longer have the same dynamical tools. To prove Theorem \ref{k normal theorem} and Theorem \ref{universal theorem} we will make use of a more combinatorial approach.

The rest of the paper is arranged as follows. In Section \ref{Sec2} we establish some notation and prove several technical results. In particular we generalise a construction of Champernowne to construct a large structured subset of $\D^{\mathbb{N}}$ consisting of $k$-simply normal sequences. The second half of Section \ref{Sec2} is concerned with deriving conditions for guaranteeing that the self-similar set of our IFS $\Phi$ is the convex hull of its fixed points. In Section \ref{sec3} we apply the results of Section \ref{Sec2} to prove various results of the form: if the contraction ratios appearing in $\Phi$ are sufficiently close to $1$, then $X_k$ is an open dense subset of $X$ of full Lebesgue measure. Some of the results of Section \ref{sec3} apply without the assumption $\Phi$ is homogeneous. In Section \ref{sec4} we generalise an argument of Erd\H{o}s and Komornik \cite{EK} to prove that $X_{uni}=int(X)$ when $\Phi$ is homogeneous and consists of $d+1$ maps with contraction ratios sufficiently close to $1$. In Section \ref{proofs} we use this result to prove Theorem \ref{universal theorem}. Theorem \ref{k normal theorem} will then follow as a corollary of Theorem \ref{universal theorem} and the results of Section \ref{sec3}. In Section \ref{final discussion} we give general conditions under which one can derive lower bounds for $\delta_k$ and $\delta_{uni}$. We apply this result to the study of expansions in non-integer bases. We also pose some open questions.

\section{Notation and preliminaries}
\label{Sec2}
\subsection{Notation}
Given a finite word $\mathbf{a}:=(a_j)_{j=1}^m$ let $S_{\mathbf{a}}:=S_{a_1}\circ \cdots \circ S_{a_m},$ let $|\a|$ denote the length of $\a$, and let $\mathbf{a}^{\infty}\in\D^{\mathbb{N}}$ denote the infinite concatenation of $\mathbf{a}$ with itself. For $\a,\b\in\D^{\mathbb{N}}$ we write $\a\prec \b$ if $\a$ is lexicographically strictly less than $\b$. Recall that  $\a$ is strictly less than $\b$ with respect to the lexicographic ordering if $a_1<b_1,$ or if there exists $l\in\mathbb{N}$ such that $a_j=b_j$ for all $1\leq j\leq l$ and $a_{l+1}<b_{l+1}$. We can extend the lexicographic ordering to elements of $\D^*$ by writing $\a\prec \b$ if $\a 0^{\infty}\prec \b 0^{\infty}$. Given $\mathbf{a}\in\D^*$ such that $\a\neq n^{|\a|},$ we let $\mathbf{a}^+$ be the lexicographically smallest word of length $|\a|$ that is strictly larger than $\a.$  Similarly, if $\a\neq 0^{|\a|}$ we let $\a^-$ be the lexicographically largest word of length $|\a|$ that is strictly smaller than $\a.$

\subsection{Preliminaries}

\subsubsection{A variation on the construction of Champernowne} A sequence $\a$ is called \emph{normal} if $\a$ is $k$-simply normal for all $k\in \mathbb{N}$. A construction of Champernowne \cite{Cha} gave the first explicit example of a normal sequence in $\{0,\ldots,9\}^{\mathbb{N}}$. More specifically, he proved that the sequence obtained by listing all the natural numbers in increasing order is normal, i.e., $$0\,1\,2\,3\,4\,5\,6\,7\,8\,9\,10\,11\,12\,13\,14\ldots
$$ was normal. Inspired by Champernowne's approach, in this section we devise a method for constructing a large structured set of $k$-normal sequences in $\D^\mathbb N$.

Let us denote the elements of $\D^k$ written in increasing lexicographic order by $\{\w_l\}_{l=0}^{{(n+1)^k}-1}.$ So $\w_{m}\prec\w_{m'}$ whenever $m<m'$. For the purpose of exposition we state here some terms in $\{\w_l\}_{l=0}^{{(n+1)^k}-1}:$
$$\w_0=0^k, \w_1=0^{k-1}1, \ldots, \w_{n}=0^{k-1}n, \w_{n+1}=0^{k-2}10, \ldots, \w_{(n+1)^k - 1}=n^k.$$ We make use of the notation $\w_l:=w_{1,l}\cdots w_{k,l}$. Using the $\{\w_l\}$ we now define the following collection of words:
\begin{align*}
\W_0&= \w_0 \w_1 \cdots \w_{(n+1)^k -1}\\
\W_1&= \w_1 \w_2 \cdots \w_{(n+1)^k - 1}\w_0\\
\W_2&=\w_2 \w_3 \cdots \w_{(n+1)^k -1}\w_0\w_1\\
\cdots&\\
\cdots&\\
\W_{n}&=\w_{n} \w_{n+1}\cdots \w_{(n+1)^k -1 }\w_0\cdots \w_{n-1}.
\end{align*}  
We emphasise here that each $\W_i$ has length $k\cdot (n+1)^k$ and begins with $0^{k-1}.$  For example, when $n=1$ and $k=2$ we have $$\W_0= 00011011 \textrm{ and } \W_1=01101100.$$

\begin{lemma}
	\label{missing zeros}
To any $\W_i$ associate the word $\mathbf{c}:=\W_i 0^{k-1}$. For any $\w_l\in \D^k$ we then have $$\#\{1\leq j \leq k\cdot (n+1)^k:c_j\cdots c_{j+k-1}=\w_l\}= k.$$
\end{lemma}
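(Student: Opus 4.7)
\medskip

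\noindent\textbf{Proof plan.}

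The main idea is to reinterpret the counting problem cyclically. Since $\W_i$ has length $k(n+1)^k$ and begins with $0^{k-1}$, the appended $0^{k-1}$ at the end of $\mathbf{c}=\W_i 0^{k-1}$ exactly mimics what we would see by wrapping around to the start of $\W_i$. Consequently, for each position $j\in\{1,\ldots,k(n+1)^k\}$ the window $c_j\cdots c_{j+k-1}$ coincides with the length-$k$ window of the cyclic word $\widetilde{\W}_i$ starting at position $j$. Since any cyclic rotation of $\W_0$ produces the same multiset of length-$k$ cyclic windows as $\W_0$ itself, it suffices to prove the lemma for $\W_0=\w_0\w_1\cdots\w_{(n+1)^k-1}$ viewed cyclically.

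I would then partition the positions $j\in\{1,\ldots,k(n+1)^k\}$ by the residue $r:=(j-1)\bmod k\in\{0,1,\ldots,k-1\}$, obtaining $(n+1)^k$ positions for each value of $r$. For $r=0$, the window $c_j\cdots c_{j+k-1}$ is exactly one of the blocks $\w_m$, so each $\w_l\in\D^k$ arises exactly once. For $r\in\{1,\ldots,k-1\}$, the window straddles a block boundary and has the form
\[
\bigl(\text{last } k-r \text{ letters of } \w_m\bigr)\;\bigl(\text{first } r \text{ letters of } \w_{m+1}\bigr),
\]
where the index $m+1$ is taken mod $(n+1)^k$.

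To finish, I would use the fact that $\w_0,\w_1,\ldots,\w_{(n+1)^k-1}$ are precisely the base-$(n+1)$ representations of $0,1,\ldots,(n+1)^k-1$ padded to length $k$. Writing a target word $\w_l=w_1\cdots w_k$ and setting
\[
b:=\sum_{i=1}^{k-r}w_i(n+1)^{k-r-i},\qquad a:=\sum_{i=1}^{r}w_{k-r+i}(n+1)^{r-i},
\]
the straddling window equals $\w_l$ precisely when $m\equiv b\pmod{(n+1)^{k-r}}$ and $m+1\in[a(n+1)^{k-r},(a+1)(n+1)^{k-r})$ modulo $(n+1)^k$. The interval in the second condition has length $(n+1)^{k-r}$ and therefore contains exactly one representative of every residue class mod $(n+1)^{k-r}$, so there is precisely one admissible $m$ (cyclically). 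Hence each $\w_l$ arises exactly once for each of the $k$ values of $r$, giving the total count of $k$.

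The main obstacle, and the step deserving most care, is the straddling analysis for $r\in\{1,\ldots,k-1\}$: one must set up the base-$(n+1)$ bookkeeping correctly and, crucially, handle the wrap-around at $m=(n+1)^k-1$ (where $\w_{m+1}$ is interpreted as $\w_0$) so that the residue-plus-interval argument yields exactly one $m$ in every case. The cyclic reformulation enabled by the leading $0^{k-1}$ of $\W_i$ is what makes this clean; without it, boundary effects would have to be tracked manually.
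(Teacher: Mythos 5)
Your proof is correct and takes a genuinely different route from the paper's. The paper proves the lemma by a direct case analysis: for $\W_0$ it splits into three subcases depending on whether the prefix $\w_{pre,r}$ equals $n^{k+1-r}$ and the suffix $\w_{suf,r}$ equals $0^{r-1}$, and for $\W_i\neq\W_0$ it handles five subcases (additionally distinguishing $\w_{pre,r}=0^{k-r}(i-1)$), in each case explicitly exhibiting the unique pair of consecutive blocks $\w_p\w_{p+1}$ (or the terminal segment of $\mathbf{c}$) containing $\w_l$ at the given offset. Your argument instead observes first that appending $0^{k-1}$ makes the linear window count coincide with the cyclic window count of $\widetilde{\W}_i$ (because $\W_i$ begins with $0^{k-1}$), and that all $\W_i$ are cyclic rotations of $\W_0$ by multiples of $k$, so the count is independent of $i$; it then exploits the identification of $\{\w_m\}$ with padded base-$(n+1)$ representations to reduce the straddling-window condition to a single residue constraint on $m$ together with a single interval constraint on $m+1$, whose compatibility yields exactly one admissible $m$ per offset $r$. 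This replaces the paper's eight enumerated subcases (and the explicit handling of the $\w_{i-1}$ boundary inside $\W_i$) with one uniform arithmetical argument. The trade-off is that the paper's version is elementary and self-contained while yours requires the cyclic reformulation and the base-$(n+1)$ dictionary, but your version is shorter, treats all $\W_i$ at once, and makes transparent why the appended $0^{k-1}$ is exactly the right padding.
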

\begin{proof}
In what follows $\w_l\in\D^k$ is fixed. We remark that any $1\leq j \leq k \cdot (n+1)^k$ can be uniquely expressed as $j=m\cdot k +r$ for some $0\leq m <(n+1)^k$ and $1\leq r \leq k$. As such to prove our result it suffices to show that for each $1\leq r \leq k$ there exists a unique $0\leq m <(n+1)^k$ such that the corresponding $j=m\cdot k +r$ satisfies $c_j\cdots c_{j+k-1}=\w_l$. This will be our strategy of proof. It is convenient to split our argument into the following two cases.
\\

\noindent \textbf{Case 1. $\W_i=\W_0$.}  When $r=1$ it is immediate from the definition of $\W_0$ that the unique $j=m\cdot k+1$ such that $c_j\cdots c_{j+k-1}=\w_l$ is when $j=l\cdot k +1$. Now let us fix $r>1.$ We introduce the notation $\w_{pre,r}:=w_{1,l}\cdots w_{k+1-r,l}$ for the first $k+1-r$ digits of $\w_l,$ and   $\w_{suf,r}:=w_{k+2-r,l}\cdots w_{k,l}$ for the last $r-1$ digits of $\w_l$. There are three subcases to consider.
\begin{itemize}
	\item Suppose $\w_{pre,r}\neq n^{k+1-r}$ so $\w_{pre,r}^+$ is well defined. Using the fact that $\W_0$ is all of the elements of $\D^k$ written in increasing order, we can deduce that there exists a unique $\w_p$ such that 
	\begin{equation}
	\label{1a}
	\w_p\w_{p+1}=\w_{suf,r}\w_{pre,r}\w_{suf,r}\w_{pre,r}^+=\w_{suf,r}\w_{l}\w_{pre,r}^+.
	\end{equation}
	\item Suppose $\w_{pre,r}= n^{k+1-r}$ and $\w_{suf,r}\neq 0^{r-1}$ so $\w_{suf,r}^{-}$ is well defined. Using the fact that $\W_0$ is all of the elements of $\D^k$ written in increasing order, we can deduce that there exists a unique $\w_p$ such that \begin{equation}
	\label{1b}
	\w_p\w_{p+1}=\w_{suf,r}^{-}\w_{pre,r}\w_{suf,r}0^{k+1-r}=\w_{suf,r}^{-}\w_{l} 0^{k+1-r}.
	\end{equation}
	\item Suppose $\w_{pre,r}=n^{k+1-r}$ and $\w_{suf,r}=0^{r-1}.$ Then the only position where these words can occur in succession is at the end of $\mathbf{c}$ where we have
	\begin{equation}
	\label{1c}\w_{(n+1)^k-1}0^{k-1}=n^{r-1}\w_{pre,r} \w_{suf,r} 0^{k-r}=n^{r-1}\w_{l} 0^{k-r}. 
	\end{equation}
\end{itemize} Equations \eqref{1a}, \eqref{1b}, and \eqref{1c} uniquely determine our value of $m$ for each of these three subcases. This completes our proof for the case $\W_i=\W_0$. 
\\

\noindent \textbf{Case 2. $\W_i\neq \W_0$.}  As in the case where $\W_i=\W_0,$ when $r=1$ there is obviously a unique $j=m\cdot k+1$ such that $c_j\cdots c_{j+k-1}=\w_l.$ Now let us fix $r>1$. We see from the construction of $\mathbf{c}$ that a block $\w_l$ is followed by the next lexicographically largest block $\w_{l+1}$ unless $\w_l=\w_{(n+1)^k -1}=n^k$ or $\w_l=\w_{i-1}$. We will use this fact implicitly in our deductions below. We now proceed via a case analysis. There are five subcases to consider.
\begin{itemize}
	\item Suppose $\w_{pre,r}\neq n^{k+1-r}$ and $\w_{pre,r}\neq 0^{k-r}(i-1).$ Then there exists a unique $p$ such that $\w_p \w_{p+1}$ appears as two successive block in $\mathbf{c}$ and 
	\begin{equation}
	\label{2a}
	\w_{p}\w_{p+1}=\w_{suf,r}\w_{pre,r}\w_{suf,r}\w_{pre,r}^+=\w_{suf,r}\w_{l}\w_{pre,r}^+.
	\end{equation}
	\item Suppose $\w_{pre,r}=n^{k+1-r}$ and $\w_{suf,r}\neq 0^{r-1}.$ Then there exists a unique $p$ such that $\w_p \w_{p+1}$ appears as two successive blocks in $\mathbf{c}$ and  
	\begin{equation}
	\label{2b}
	\w_p\w_{p+1}=\w_{suf,r}^{-}\w_{pre,r}\w_{suf,r}0^{k+1-r}=\w_{suf,r}^{-}\w_{l}0^{k+1-r}. 
	\end{equation}
	\item Suppose $\w_{pre,r}=n^{k+1-r}$ and $\w_{suf,r}= 0^{r-1}$. Then the only position where $\w_l$ can occur is when
	\begin{equation}
	\label{2c}\w_{(n+1)^k -1}\w_0=n^{r-1}\w_{pre,r}\w_{suf,r}0^{k+1-r}=n^{r-1}\w_{l}0^{k+1-r}. 
	\end{equation}
	\item Suppose $\w_{pre,r}= 0^{k-r}(i-1)$ and $\w_{suf,r}\neq 0^{r-1}$. Then there exists a unique $p$ such that $\w_p \w_{p+1}$ appears as two successive blocks in $\mathbf{c}$ and  \begin{equation}
	\label{2d}
	\w_p\w_{p+1}=\w_{suf,r}\w_{pre,r}\w_{suf,r}\w_{pre,r}^+=\w_{suf,r}\w_{l}\w_{pre,r}^+. 
	\end{equation}  
	\item Suppose $\w_{pre,r}= 0^{k-r}(i-1)$ and $\w_{suf,r}=0^{r-1}.$ Then the only position where these words can occur in succession is at the end of $\mathbf{c}$ where we have 
	\begin{equation}
	\label{2e}
	\w_{i-1}0^{k-1}=0^{r-1}0^{k-r}(i-1)0^{r-1}0^{k+1-r}=0^{r-1}\w_{l}0^{k+1-r}.
	\end{equation} 
	\end{itemize}
Equations \eqref{2a}, \eqref{2b}, \eqref{2c}, \eqref{2d}, and \eqref{2e} uniquely determine our value for $m$ in each of the five subcases. This completes our proof when $\W_i\neq \W_0$. 
\end{proof}

\begin{proposition}
	\label{k normal prop}
Every element of $\{\W_0,\ldots, \W_{n}\}^{\mathbb{N}}$ is $k$-simply normal.	
\end{proposition}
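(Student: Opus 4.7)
The plan is to reduce the claim to Lemma \ref{missing zeros} by exploiting the fact that every $\W_i$ begins with the block $0^{k-1}$. Fix $\a=\W_{i_1}\W_{i_2}\W_{i_3}\cdots\in\{\W_0,\ldots,\W_n\}^{\mathbb N}$ and fix a target word $\w_l\in\D^k$. Set $L:=k\cdot(n+1)^k$, the common length of each $\W_i$, and for each block index $j\geq 1$ let $I_j:=\{(j-1)L+1,\ldots,jL\}$ be the set of starting positions lying in the $j$-th block of $\a$.

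The key observation is that, because $\W_{i_{j+1}}$ begins with $0^{k-1}$, the finite word of length $L+k-1$ obtained by reading the $j$-th block of $\a$ together with the next $k-1$ symbols of $\a$ is exactly $\W_{i_j}0^{k-1}$. Applying Lemma \ref{missing zeros} to $\mathbf{c}=\W_{i_j}0^{k-1}$ therefore gives
\[
\#\set{p\in I_j:a_p\cdots a_{p+k-1}=\w_l}=k
\]
for every $j\geq 1$, with no contribution lost or double counted at the block boundaries.

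Now I would sum over blocks. For any $m$ of the form $m=NL$ we immediately obtain
\[
\#\set{1\leq p\leq m:a_p\cdots a_{p+k-1}=\w_l}=Nk,
\]
and hence the ratio equals $Nk/(NL)=1/(n+1)^k$. For general $m$, write $m=NL+s$ with $0\leq s<L$; the count differs from $Nk$ by at most $L$, a constant independent of $m$, while $m\to\infty$ forces $N\to\infty$. Therefore
\[
\textrm{freq}_{\w_l}(\a)=\lim_{m\to\infty}\frac{\#\set{1\leq p\leq m:a_p\cdots a_{p+k-1}=\w_l}}{m}=\frac{1}{(n+1)^k}.
\]
Since $\w_l\in\D^k$ was arbitrary, $\a$ is $k$-simply normal.

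The only nontrivial step is the boundary observation, which is really the whole point of the choice of $\W_0,\ldots,\W_n$: cyclically shifting the list of all length-$k$ words so that each rotation still begins with $0^{k-1}$ is precisely what makes the junction $\W_{i_j}\W_{i_{j+1}}$ agree, in its first $L+k-1$ coordinates, with the padded word $\W_{i_j}0^{k-1}$ handled in Lemma \ref{missing zeros}. Everything else is bookkeeping.
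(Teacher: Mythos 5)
Your argument is the same as the paper's: you apply Lemma \ref{missing zeros} block by block, using the fact that every $\W_i$ begins with $0^{k-1}$ so the padded word $\W_{i_j}0^{k-1}$ matches the actual first $L+k-1$ symbols at each block boundary, and then sum. You spell out a little more detail than the paper does (explicitly defining the index sets $I_j$ and bounding the discrepancy for $m$ not a multiple of $L$), but the idea and structure are identical.
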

\begin{proof}
 Let $\a\in \{\W_0,\ldots, \W_n\}^{\mathbb{N}}$ and $\w_l\in \D^k$ be arbitrary. Note that each $\W_i$ begins with $0^{k-1}$. Therefore by an application of Lemma \ref{missing zeros} we have $$\#\{1\leq j \leq k\cdot (n+1)^k:a_j\cdots a_{j+k-1}=\w_l\}= k.$$ More generally, by Lemma \ref{missing zeros} we see that for any $p\in\mathbb{N}$ we have $$\#\{p k\cdot (n+1)^k +1\leq j \leq (p+1) k\cdot (n+1)^k:a_j\cdots a_{j+k-1}=\w_l\}= k.$$
 Therefore for any $p\in\mathbb{N}$ we have 
$$\#\{1\leq j \leq p\cdot k\cdot (n+1)^k:a_j\cdots a_{j+k-1}=\w_l\}=p\cdot k.$$  This implies $\textrm{freq}_{\w_l}(\a)=1/(n+1)^k$ as required. Since $\a$ and $\w_l$ were arbitrary our result follows.
\end{proof}

\subsubsection{Self-similar sets with no holes}
In many of our later proofs it will be important to be able to assert that the self-similar set $X$ of $\Phi=\set{\la_i\x+(1-\la_i)\p_i}_{i\in \D}$ equals the convex hull of its fixed points $F=\set{\p_i}_{i\in \D}$, i.e.,
\begin{equation}
\label{no holes}
X=\textrm{conv}(F).
\end{equation} Here and in what follows we use $\textrm{conv}(F)$ to denote the convex hull of a finite set of vectors $F\subseteq \mathbb{R}^d$. In this subsection we give sufficient conditions for \eqref{no holes} to hold. Much of our analysis is a generalisation of results appearing in \cite{BMS} and \cite{Sid3} to the case where our IFS contains similitudes with different rates of contraction. Lemma \ref{no holes lemma 2} also provides a more succinct proof of Proposition $2.4$ from \cite{Sid3}. 

\begin{lemma}
\label{no holes lemma}
Suppose $\Phi=\set{S_i}_{i=0}^d$ consists of $d+1$ maps and $F$ is not contained in a $(d-1)$-dimensional affine subspace. If the contraction ratios satisfy $\sum_{i=0}^d\lambda_i \geq d$, then $X=\textrm{conv}(F).$
\end{lemma}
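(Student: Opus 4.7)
The plan is to work entirely in barycentric coordinates relative to the simplex $\text{conv}(F)$. Since $F=\{\p_0,\ldots,\p_d\}$ is not contained in a $(d-1)$-dimensional affine subspace and consists of $d+1$ points, it forms the vertex set of a non-degenerate $d$-simplex, and every point $\x\in\text{conv}(F)$ is uniquely written as $\x=\sum_{j=0}^d t_j \p_j$ with $t_j\geq 0$ and $\sum_j t_j=1$.

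First I would compute the action of each $S_i$ on barycentric coordinates. A direct calculation from $S_i(\x)=\lambda_i\x+(1-\lambda_i)\p_i$ shows that $S_i$ sends the coordinate vector $(t_0,\ldots,t_d)$ to $(s_0,\ldots,s_d)$ with $s_j=\lambda_i t_j$ for $j\neq i$ and $s_i=\lambda_i t_i + (1-\lambda_i)$. Consequently,
\[
S_i(\text{conv}(F))=\bigl\{(s_0,\ldots,s_d): s_j\geq 0,\ \textstyle\sum_j s_j=1,\ s_i\geq 1-\lambda_i\bigr\}.
\]
In particular $S_i(\text{conv}(F))\subseteq \text{conv}(F)$, so $\text{conv}(F)$ is forward invariant under the Hutchinson operator.

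Next I would show the reverse inclusion $\text{conv}(F)\subseteq \bigcup_{i=0}^d S_i(\text{conv}(F))$. By the preceding description this reduces to showing that, for every barycentric vector $(t_0,\ldots,t_d)$, there is some $i$ with $t_i\geq 1-\lambda_i$. Suppose for contradiction that $t_i<1-\lambda_i$ for all $i$. Summing gives
\[
1=\sum_{i=0}^d t_i < \sum_{i=0}^d (1-\lambda_i) = (d+1)-\sum_{i=0}^d \lambda_i \leq (d+1)-d=1,
\]
a contradiction. Hence $\text{conv}(F)=\bigcup_{i=0}^d S_i(\text{conv}(F))$.

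Finally, I would invoke Hutchinson's uniqueness theorem. The set $\text{conv}(F)$ is non-empty and compact and satisfies the fixed-point equation for the Hutchinson operator of $\Phi$, which has $X$ as its unique non-empty compact fixed point. Therefore $X=\text{conv}(F)$, completing the proof. There is no genuine obstacle here; the main content is simply recognising that the linear inequality $\sum_i \lambda_i\geq d$ is precisely the threshold for the one-step barycentric covering argument to succeed, and after that one just applies the uniqueness of the attractor.
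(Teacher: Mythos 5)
Your proof is correct and takes essentially the same approach as the paper: describe the images $S_i(\mathrm{conv}(F))$ explicitly in a convenient coordinate system and show the covering holds by summing the inequalities $t_i<1-\lambda_i$ to reach a contradiction with $\sum_i\lambda_i\geq d$. The only cosmetic difference is that you use full barycentric coordinates (which treats all indices symmetrically), whereas the paper normalises $\p_0=\mathbf{0}$ and $\p_i=\e_i$ so that $S_0(\Delta)$ is written slightly differently from the other images; both come down to the same one-line inequality.
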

\begin{proof}
By performing a change of coordinates we may assume that $F=\{\p_i\}_{i=0}^d$ where $\p_0=(0,\ldots,0)$ and $\p_i$ is the $i$-th vector in the standard unit basis of $\mathbb{R}^d$ for $1\leq i\leq d$. For these vectors it is straightforward to check that 
$$\textrm{conv}(F) =\Big\{\x=(x_1,\ldots, x_d)\in\mathbb{R}^d: x_j\geq 0\, , \sum_{j=1}^d x_j\leq 1\Big\}.$$ Let $\Delta$ denote the right hand side of the above equation. It is a simple exercise to check that 
$$S_0(\Delta):=\Big\{\x\in\mathbb{R}^d: x_j\geq 0,\,  \sum_{j=1}^d x_j\leq \lambda_0\Big\},$$ and for $1\leq i \leq d$
$$S_i(\Delta):= \Big\{\x\in\mathbb{R}^d: x_j\geq 0,\, x_i\geq  1-\lambda_i,\, \sum_{j=1}^d x_j\leq 1\Big\}.$$
Recall that $X$ is the self-similar set generated by $\Phi=\set{S_i}_{i=0}^d$. If $X\neq \Delta$ then $\Delta\neq\cup_{i=0}^d S_i(\Delta).$ Since $S_i(\Delta)\subseteq \Delta$ for each $i$,  there must exists $\x \in \Delta$ satisfying 
\[\sum_{j=1}^d x_j> \lambda_0,\quad \textrm{and}\quad  x_i< 1-\lambda_i\quad \textrm{for each }1\leq i \leq d.\]
Substituting the second inequality into the first we see that if such an $\x$ exists,  then we must have $d>\sum_{i=0}^d\lambda_i.$ Therefore if $d
 \leq\sum_{i=0}^d\lambda_i$, no such $\x$ can exist. So $\Delta=\cup_{i=0}^d S_i(\Delta)$ and $\Delta$ is the self-similar set for $\Phi$. 
\end{proof}

\begin{lemma}
	\label{no holes lemma 2}
Suppose $\Phi=\set{S_i}_{i\in\D}$ is such that 
\begin{equation}
\label{contraction sum}
\min_{\substack{A\subseteq \D\\ \#A=d+1}}\sum_{i\in A}\lambda_i\geq d.
\end{equation} 
Then $X=\textrm{conv}(F).$
\end{lemma}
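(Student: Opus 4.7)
The plan is to reduce to Lemma \ref{no holes lemma} by selecting, for each point in $\textrm{conv}(F)$, a suitable sub-IFS consisting of exactly $d+1$ maps.

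First I would establish the easy inclusion $X\subseteq \textrm{conv}(F)$: since each $S_i$ is an affine contraction with fixed point $\p_i\in \textrm{conv}(F)$, the set $\textrm{conv}(F)$ is mapped into itself by every $S_i$, so $\cup_{i\in\D}S_i(\textrm{conv}(F))\subseteq \textrm{conv}(F)$. Because $\textrm{conv}(F)$ is non-empty and compact, the uniqueness clause of Hutchinson's theorem forces $X\subseteq \textrm{conv}(F)$.

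For the reverse inclusion, fix an arbitrary $\y\in\textrm{conv}(F)$. By Carath\'eodory's theorem, $\y$ is a convex combination of some affinely independent subset of $F$. I would then invoke the standing assumption that $F$ is not contained in any $(d-1)$-dimensional affine subspace: if the chosen affinely independent set has fewer than $d+1$ elements, then $F$ cannot be contained in its affine span, so one can iteratively append elements of $F$ until the set has exactly $d+1$ affinely independent members; the convex hull only grows under such extensions, so $\y$ remains inside. This produces a subset $A\subseteq \D$ with $\#A=d+1$ such that $\{\p_i:i\in A\}$ is affinely independent and $\y\in\textrm{conv}(\{\p_i:i\in A\})$.

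Next, consider the sub-IFS $\Phi_A=\{S_i\}_{i\in A}$, and let $X_A$ denote its self-similar set. Affine independence of $\{\p_i:i\in A\}$ is exactly the hypothesis that these fixed points are not contained in a $(d-1)$-dimensional affine subspace, and the hypothesis \eqref{contraction sum} gives $\sum_{i\in A}\lambda_i\geq d$. Consequently Lemma \ref{no holes lemma} applies to $\Phi_A$ and yields $X_A=\textrm{conv}(\{\p_i:i\in A\})$. In particular $\y\in X_A$.

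To finish I would observe that $X_A\subseteq X$: every $\x\in X_A$ admits a coding $\a\in A^{\mathbb{N}}\subseteq \D^{\mathbb{N}}$ under $\Phi_A$, and the definition of $\pi$ shows that this same sequence is a coding of $\x$ under $\Phi$, so $\x\in X$. Hence $\y\in X$, and since $\y\in \textrm{conv}(F)$ was arbitrary we conclude $\textrm{conv}(F)\subseteq X$, completing the proof. The only potentially delicate step is the Carath\'eodory/extension argument that produces a $(d+1)$-element affinely independent subset of $F$ whose convex hull still contains $\y$; everything else is essentially bookkeeping and a direct appeal to Lemma \ref{no holes lemma}.
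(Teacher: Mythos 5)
Your proof is correct, but it takes a genuinely different route from the paper's. The paper argues by induction on the ambient dimension $d$: it applies Carath\'eodory's theorem to write $\textrm{conv}(F)$ as a union of convex hulls of $(d+1)$-element subsets $B$, and then splits into two cases depending on whether $B$ spans $\mathbb{R}^d$ (in which case Lemma~\ref{no holes lemma} applies directly) or lies in a lower-dimensional affine subspace (in which case the inductive hypothesis is invoked after identifying that subspace with a lower-dimensional Euclidean space). You avoid the induction entirely: you use the affinely-independent refinement of Carath\'eodory's theorem to place $\y$ in the convex hull of an affinely independent subset of $F$, and then you use the standing assumption that $F$ spans $\mathbb{R}^d$ affinely to \emph{extend} this subset to a full set of $d+1$ affinely independent fixed points, so that Lemma~\ref{no holes lemma} always applies to the resulting sub-IFS with no dimension reduction needed. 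The ``potentially delicate'' extension step you flag is in fact fine: whenever the affinely independent set has fewer than $d+1$ elements, its affine hull has dimension at most $d-1$, so $F$ is not contained in it and one can always append another $\p_j$ preserving affine independence, and passing to a superset only enlarges the convex hull. Your argument is arguably cleaner, since it sidesteps the small informality in the paper's inductive step where $B$ might sit in an affine subspace of dimension strictly below $m$, requiring a further descent that the paper glosses over. The final ingredient in both proofs --- that $X_A\subseteq X$ for any sub-IFS and that $X\subseteq\textrm{conv}(F)$ --- is handled the same way.
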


\begin{proof}

Let us proceed via induction on the dimension $d$. Let $d=1$ and without loss of generality assume $\p_0=\min_{i\in\D}\{\p_i\}$ and $\p_n=\max_{i\in \D}\{\p_i\}.$ We have $S_0([\p_0,\p_n])=[\p_0,\p_0+\lambda_0(\p_n-\p_0)]$ and $S_n([\p_0,\p_n])=[\p_n-\lambda_n(\p_n-\p_0),\p_n].$ By our assumption we know that $\lambda_0+\lambda_n\geq 1$. It follows that $\p_n-\lambda_n(\p_n-\p_0)\leq \p_0+\lambda_0(\p_n-\p_0)$ and so $[\p_0,\p_n]=S_0([\p_0,\p_n])\cup S_n([\p_0,\p_n]).$ Since $S_i([\p_0,\p_n])\subseteq [\p_0,\p_n]$ for all the remaining $i$ we see that $X=[\p_0,\p_n]=\textrm{conv}(F).$

Let us assume the result is true for $d=m$. We now show that the lemma holds when $d=m+1$. To prove our inductive step we make use of a well known theorem of Caratheodory which states that if $F$ is a finite set of points in $\mathbb{R}^{m+1}$, then any point in $\textrm{conv}(F)$ can be expressed as the convex combination of $m+2$ points from $F$ (see \cite{Roc}).

Applying Caratheodory's theorem in $\mathbb{R}^{m+1}$ we have 
$$\textrm{conv}(F)=\bigcup_ {\substack{B\subseteq  F\\ \# B={m+2}}}\textrm{conv}(B).$$
 Since $X\subseteq \textrm{conv}(F)$ it suffices to show that $\textrm{conv}(B)\subseteq X$ for each $B\subseteq F$ consisting of $m+2$ elements. If the elements of $B$ are not contained in a $m$-dimensional affine subspace,  then we can apply Lemma \ref{no holes lemma} to assert that $\textrm{conv}(B)= X_{B}$, where $X_{B}$ is the self-similar set determined by the IFS $\{S_i:\p_i \in B\}$. Since $X_{B}\subseteq X$,  we have the desired inclusion when the elements of $B$ are not contained in a $m$-dimensional affine subspace. If the elements of $B$ are contained in 
such a subspace, we can identify this subspace with $\mathbb{R}^m$, we can then apply our inductive hypothesis when $d=m$ to the IFS determined by $\{S_i:\p_i \in B\}$ acting upon $\mathbb{R}^{m}$. To apply our inductive hypothesis when $d=m$ it only remains to check that 
$$\min_{\substack{A\subseteq \{i:\p_i\in B\}\\ \# A=m+1}}\sum_{i\in A}\lambda_i \geq m.$$ However this holds because we are assuming \eqref{contraction sum} holds when $d=m+1$ and $\lambda_i\in(0,1)$ for all $i\in\D$.
\end{proof}

It follows from the construction of the $\W_i$ that each digit in $\D$ occurs in $\W_i$ exactly $k\cdot(n+1)^{k-1}$ times. Therefore the contraction ratio $\la_{\W_i}$ of each $S_{\W_i}$ is independent of $i$ and equals $\prod_{i\in \D}\lambda_i^{k\cdot(n+1)^{k-1}}.$ Given $k\geq 1$ we let $$\mathcal{P}_k:=\Big\{(\lambda_i)_{i\in \D}:\lambda_i\in(0,1) \textrm{ and } \prod_{i\in \D}\lambda_i^{k\cdot(n+1)^{k-1}}\geq \frac{d}{d+1}\, \Big\}.$$ 
Therefore, $\mathcal{P}_k$ is precisely the set of $(\lambda_i)_{i\in \D}$ such that 
\[
\min_{\substack{A\subseteq \D\\ \#A=d+1}}\sum_{i\in A}\la_{\W_i}\ge d.
\]
So Lemma \ref{no holes lemma 2} can be applied to the IFS $\{S_{\W_i}\}_{i\in\mathcal{D}}.$ In what follows we denote the IFS determined by $\{S_{\W_i}\}_{i\in\mathcal{D}}$ by $\Phi_k,$ and the corresponding self-similar set by $X_{\Phi_k}$.

We now combine Proposition \ref{k normal prop} and Lemma \ref{no holes lemma 2} to give sufficient conditions guaranteeing that $X_k$ contains a metrically and topologically large subset of $X$.  

\begin{proposition}
	\label{generic k}
Let $k\geq 1$ and suppose $(\lambda_i)_{i\in \D}\in\mathcal{P}_k.$ If the set of points $\{\pi(\W_{i}^{\infty})\}_{i\in \D}$ is not contained in a $(d-1)$-dimensional affine subspace of $\mathbb{R}^d,$ then $X_k$ contains an open dense subset of $X$. Moreover, Lebesgue almost every $\x\in X$ is contained in $X_k$. 
\end{proposition}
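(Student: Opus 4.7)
The plan is to exhibit an explicit open dense subset $U^*\subseteq X_k$ built from $\Phi$-cylinders of the attractor $X_{\Phi_k}$ of the sub-IFS $\Phi_k$, and then to derive the Lebesgue-a.e.\ statement via a density argument that exploits the scale-invariance of $\L(X_{\Phi_k})/\L(X)$.

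First I would apply Lemma \ref{no holes lemma 2} to $\Phi_k=\{S_{\W_i}\}_{i\in\D}$; as the paragraph preceding the proposition already observes, $(\la_i)\in\mathcal{P}_k$ is exactly the condition under which the hypothesis of Lemma \ref{no holes lemma 2} holds for $\Phi_k$. Hence $X_{\Phi_k}=\mathrm{conv}(\{\pi(\W_i^\infty)\}_{i\in\D})$, and by the affine independence hypothesis this convex set has nonempty interior in $\R^d$. Set $U^*:=\bigcup_{\b\in\D^*}S_\b(\mathrm{int}(X_{\Phi_k}))$.

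Next I verify $U^*\subseteq X_k$: any $\y\in X_{\Phi_k}$ admits a $\Phi_k$-coding $(i_j)_{j=1}^\infty$, which expanded as a $\Phi$-coding is $\W_{i_1}\W_{i_2}\cdots\in\{\W_0,\ldots,\W_n\}^\N$ and hence $k$-simply normal by Proposition \ref{k normal prop}. For $\x=S_\b(\y)$ with $\y\in\mathrm{int}(X_{\Phi_k})$, prepending $\b$ produces a $\Phi$-coding of $\x$; since $\b$ is finite, the frequency of every length-$k$ block is unchanged, so the coding stays $k$-simply normal and $\x\in X_k$. Openness of $U^*$ in $\R^d$ (hence in $X$) is immediate since each $S_\b$ is a homeomorphism of $\R^d$, and density in $X$ follows because for any $\x\in X$ with coding $\a$, the nonempty open sets $S_{a_1\cdots a_m}(\mathrm{int}(X_{\Phi_k}))\subseteq U^*$ sit inside the cylinders $S_{a_1\cdots a_m}(X)\ni\x$, whose diameters tend to $0$.

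For the Lebesgue statement let $c:=\L(X_{\Phi_k})/\L(X)\in(0,1]$, which is positive because $X_{\Phi_k}$ has interior and finite because $X$ is bounded. Since $S_\b$ is a similitude and $S_\b(X_{\Phi_k})\subseteq X_k$, one has $\L(S_\b(X)\cap X_k)\geq c\,\L(S_\b(X))$ for every $\b\in\D^*$. Suppose for contradiction $E:=X\setminus X_k$ has positive Lebesgue measure and pick a density point $\x_0\in E$. Fix a coding $\a$ of $\x_0$, write $\la_{\min}:=\min_i\la_i>0$ and $D:=\mathrm{diam}(X)$, and for each small $r>0$ let $m(r)$ be the least integer with $\la_{a_1\cdots a_{m(r)}}D\leq r$. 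The step-ratio bound $\la_{a_j}\geq\la_{\min}$ forces $\la_{a_1\cdots a_{m(r)}}D>\la_{\min}r$, so the cylinder $S_{a_1\cdots a_{m(r)}}(X)$ contains $\x_0$, sits inside $B(\x_0,r)$, and satisfies $\L(S_{a_1\cdots a_{m(r)}}(X))\geq\delta\,\L(B(\x_0,r))$ for a constant $\delta>0$ depending only on $\la_{\min}$, $D$, $\L(X)$, $d$. Combining with the $c$-fraction estimate yields
\[
\frac{\L(E\cap B(\x_0,r))}{\L(B(\x_0,r))}\leq 1-c\cdot\frac{\L\bigl(S_{a_1\cdots a_{m(r)}}(X)\bigr)}{\L(B(\x_0,r))}\leq 1-c\delta<1,
\]
contradicting the density-$1$ property of $\x_0$ and forcing $\L(E)=0$.

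The step I expect to require the most care is this Lebesgue density argument: one must ensure that at arbitrarily small scales there is always a $\Phi$-cylinder through $\x_0$ that simultaneously fits inside $B(\x_0,r)$ and fills a uniform positive fraction of it. This is where the boundedness below of the contraction ratios together with $\L(X_{\Phi_k})>0$ are essential.
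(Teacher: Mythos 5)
Your proof is correct and follows essentially the same route as the paper's: apply Lemma \ref{no holes lemma 2} to $\Phi_k$ (noting $\mathcal{P}_k$ is precisely the needed hypothesis), use Proposition \ref{k normal prop} to place $X_{\Phi_k}$ inside $X_k$, take the union of its $S_{\b}$-images for the open dense set, and finish with a Lebesgue density-point argument comparing a cylinder through $\x$ to $B(\x,r)$. The only cosmetic difference is that you phrase the density step via a density point of the complement, whereas the paper bounds the upper density of $X\setminus X_k$ at every point; these are the same estimate.
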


\begin{proof}
Each $S_{\W_i}$ can be written as $$S_{\W_i}(\x)=\prod_{i\in \D}\lambda_i^{k\cdot(n+1)^{k-1}}\cdot\x +\Big(1-\prod_{i\in \D}\lambda_i^{k\cdot(n+1)^{k-1}}\Big)\cdot\pi(\W_{i}^{\infty}).$$ So each $S_{\W_i}$ can be written in the form appearing in \eqref{similarity} where the appropriate fixed point is $\pi(\W_{i}^{\infty}).$ It follows from Lemma \ref{no holes lemma 2} that if  $(\lambda_i)_{i\in \D}\in\mathcal{P}_k$ and the fixed points  $\{\pi(\W_{i}^{\infty})\}_{i\in \D}$ are not contained in a $(d-1)$-dimensional affine subspace of $\mathbb{R}^d$, then $X_{\Phi_k}=\textrm{conv}(\{\pi(\W_{i}^{\infty})\}_{i\in \D})$ and has non-empty interior. Importantly, by Proposition \ref{k normal prop} each element of $X_{\Phi_k}$ has a $k$-normal coding, i.e., $X_{\Phi_k}\subseteq X_k$. 

Consider the set $$X_{pre,\Phi_k}:= \bigcup_{\a\in \D^*}S_{\a}(int(X_{\Phi_k})).$$  Since $X_{\Phi_k}$ has non-empty interior,  it follows that $X_{pre,\Phi_k}$ is an open dense subset of $X$. Moreover, each $\x\in X_{pre,\Phi_k}$ has a coding of the form $\a\b$ where $\a$ is a finite word and $\b$ is a $k$-normal sequence. Since whether a sequence is $k$-normal is independent of an initial block, it follows that every element of $X_{pre,\Phi_k}$ has a $k$-normal coding and therefore $X_k$ contains an open dense subset of $X$.

It remains to prove that $\L(X\setminus X_k)=0.$ Here $\L$ denotes the $d$-dimensional Lebesgue measure. Fix $\x\in X$ and let $\a$ be a coding of $\x$. For any $r>0$ sufficiently small there exists $n\in\mathbb{N}$ such that 
\begin{equation}
\label{density equation}
\lambda_{a_1}\cdots \lambda_{a_n} Diam(X)<r\leq  \lambda_{a_1}\cdots \lambda_{a_{n-1}} Diam(X).
\end{equation} 
Since $\x\in S_{a_{1}\cdots a_n}(X)$,  it follows that $S_{a_{1}\cdots a_n}(X)\subseteq B(\x,r).$ Using \eqref{density equation} it follows that
\begin{align*}
\L(B(\x,r)\setminus X_k)&\leq \L(B(\x,r)\setminus S_{a_{1}\cdots a_n}(X_{\Phi_k}))\\
&= \L(B(0,1))\cdot r^d - (\lambda_{a_1}\cdots \lambda_{a_n})^d \L(X_{\Phi_k})\\
&\leq \L(B(0,1))\cdot r^d- \Big(\frac{ \min_{i\in \D} \lambda_i}{Diam(X)}\Big)^d \L(X_{\Phi_k})r^d\\
&=\L(B(0,1))r^d\Big(1-\Big(\frac{ \min_{i\in \D} \lambda_i}{Diam(X)}\Big)^d \frac{\L(X_{\Phi_k})}{\L(B(0,1))}\Big).
\end{align*}Therefore for all $\x\in X\setminus X_k$ we have $$\limsup_{r\to 0} \frac{\L(B(\x,r)\cap (X\setminus X_k))}{\L(B(\x,r))}<1.$$ Applying the Lebesgue density theorem we may conclude that $\L(X\setminus X_k)=0.$ 
\end{proof}
Proposition \ref{generic k} gives conditions guaranteeing that a typical element of $X$, in the sense of both topology and measure, will be contained in $X_k$. This topological statement should be contrasted with the folklore result that for self-similar sets satisfying the strong separation condition, the set of $\x$ whose unique coding is not $1$-normal contains a dense $G_{\delta}$ set and so is topologically generic. It is also worth commenting on our proof of the measure counterpart of Proposition \ref{generic k}. Typically one would prove a result of this type in one of two ways. One could define a continuous map $T:X\to X$ and study the ergodic $T$-invariant measures. If one of these measures were equivalent to the Lebesgue measure restricted to $X$ then one could hope that $T$ would yield some information about the set of codings for a Lebesgue generic $\x$. Alternatively, one could consider a measure supported on $\D^\mathbb{N}$ and hope that it projects under $\pi$ to a measure which is equivalent to the Lebesgue measure restricted to $X$. Knowledge about the measure supported on $\D^{\mathbb{N}}$ can then be transferred into knowledge about the set of codings for a Lebesgue generic $\x$. Our proof of Proposition \ref{generic k} didn't make use of either of these methods. Our proof instead relied upon constructing a sizeable set of points in $X_k$ and then using the fact that $S_{\a}(X_k)\subseteq X_k$ for all $\a\in \D^*$. The reason we can adopt such an approach is because our IFS contains such significant overlaps.

The problem with Proposition \ref{generic k} is verifying when the set $\{\pi(\W_i^{\infty})\}_{i\in \D}$ is not contained in a $(d-1)$-dimensional affine subspace of $\mathbb{R}^d.$ We concern ourselves with this verification in the next section.

\section{Metric and topological properties of $X_k$}
\label{sec3}
In this section we prove several results which follow from Proposition \ref{generic k}. The proofs of each of these statements rely upon showing that $\{\pi(\W_i^{\infty})\}_{i\in \D}$ is not contained in a $(d-1)$-dimensional affine subspace of $\mathbb{R}^d$ for some appropriate subset of the space of contractions.

For the purposes of exposition in what follows we let $$\mathcal{M}_{k}:=\Big[\Big(\frac{d}{d+1}\Big)^{\frac{1}{k\cdot (n+1)^k}},1\Big).$$ $\mathcal{M}_k$ is simply the set of $\lambda\in(0,1)$ such that $(\lambda,\ldots,\lambda)\in \mathcal{P}_k$.

\begin{proposition}
	\label{prop1}
Let $k\geq 1$ and suppose $\Phi$ is homogeneous. Then for all but at most finitely many $\lambda\in \mathcal{M}_k,$ the set $X_k$ contains an open dense subset of $X$ and Lebesgue almost every $\x\in X$ is contained in $X_k$. In particular, there exists $\delta_{k}':=\delta_{k}'(F)>0$ such that if $\lambda\in(1-\delta_{k}',1),$ then $X_k$ contains an open dense subset of $X$ and Lebesgue almost every $\x\in X$ is contained in $X_k$.
\end{proposition}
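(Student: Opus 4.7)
The plan is to invoke Proposition \ref{generic k}. Observe that $\la\in\mathcal{M}_k$ is by definition the condition that the homogeneous tuple $(\la)_{i\in\D}$ lies in $\mathcal{P}_k$. Hence it suffices to show that, for all but finitely many $\la\in\mathcal{M}_k$, the points $\{\pi(\W_i^\infty)\}_{i\in\D}$ do not lie in any $(d-1)$-dimensional affine subspace of $\R^d$.

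The starting point is the explicit formula, obtained directly from the definition of $\pi$,
$$
\pi(\W_i^\infty)(\la)=\frac{1-\la}{1-\la^L}\sum_{j=1}^L\la^{j-1}\p_{u_{i,j}},\qquad L:=k(n+1)^k,
$$
where $u_{i,1}\cdots u_{i,L}=\W_i$. The potential singularity at $\la=1$ is removable because numerator and denominator each have a simple zero there, so $\la\mapsto\pi(\W_i^\infty)(\la)$ extends to a real-analytic $\R^d$-valued function on an open neighbourhood of $[0,1]$. For each $A\subseteq\D$ with $|A|=d+1$, let $M_A(\la)$ be the determinant of the $(d+1)\times(d+1)$ matrix whose rows are the homogeneous coordinates $(1,\pi(\W_i^\infty)(\la))$, $i\in A$. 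Each $M_A$ is real-analytic on a neighbourhood of $[0,1]$, and $\{\pi(\W_i^\infty)(\la)\}_{i\in\D}$ fails to affinely span $\R^d$ if and only if $M_A(\la)=0$ for every such $A$.

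The crux is to produce an $A$ with $M_A\not\equiv 0$. To this end I would Taylor expand around $\la=1$: writing $\mu:=1-\la$ and using that each digit in $\D$ occurs exactly $k(n+1)^{k-1}$ times in $\W_i$, one obtains
$$
\pi(\W_i^\infty)(\la)=\bar{\p}+\mu\,v_i+O(\mu^2),\qquad \bar{\p}:=\tfrac{1}{n+1}\sum_{l\in\D}\p_l,
$$
with $v_i=\tfrac{L-1}{2}\bar{\p}-\tfrac{1}{L}\sum_{j=1}^{L}(j-1)\p_{u_{i,j}}$. Row reducing against some fixed row indexed by $i_0\in A$ then yields
$$
M_A(\la)=\mu^{d}\det\bigl(v_i-v_{i_0}\bigr)_{i\in A\setminus\{i_0\}}+O(\mu^{d+1}).
$$
Using the cyclic structure of the $\W_i$ (namely that $\W_i$ is the cyclic shift of $\W_0$ by $ki$ positions) together with the standing hypothesis that $F$ affinely spans $\R^d$, I would show that $A$ can be chosen so that $\{v_i-v_{i_0}\}_{i\in A\setminus\{i_0\}}$ is linearly independent in $\R^d$; if the first-order coefficient happened to vanish for every choice of $A$, one would continue the Taylor expansion to the first nonzero order and repeat. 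This identifies some $M_A$ as a nonzero real-analytic function whose zero set in $[0,1]$ is discrete, in particular finite in $\mathcal{M}_k$ and isolated at $\la=1$. Thus some right-neighbourhood $(1-\delta_k',1)$ of $1$ is disjoint from this zero set.

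Applying Proposition \ref{generic k} at every $\la\in\mathcal{M}_k$ outside the finite exceptional set produces the required topological (open dense) and measure-theoretic (full Lebesgue measure) conclusions; the choice of $\delta_k'$ above then supplies the final clause of the statement. The chief technical hurdle is verifying the claimed linear independence of the leading Taylor coefficients $v_i-v_{i_0}$, which reduces to a combinatorial analysis of the Champernowne-style words $\W_i$ combined with the affine independence of $F$.
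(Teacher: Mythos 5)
Your overall strategy is the same as the paper's (reduce to Proposition~\ref{generic k} and show that a determinant built from the $\pi(\W_i^\infty)$ is a nonvanishing rational/analytic function of $\lambda$), but the crucial step --- exhibiting a choice of rows for which the determinant is not identically zero --- is not actually carried out, and this is precisely where the work lies. You Taylor expand at $\lambda=1$ and reduce to showing that some collection $\{v_i-v_{i_0}\}_{i\in A\setminus\{i_0\}}$ of first-order coefficients is linearly independent, acknowledging this as ``the chief technical hurdle''. The fallback you offer (``if the first-order coefficient happened to vanish for every choice of $A$, one would continue the Taylor expansion to the first nonzero order'') is not a proof either: establishing that \emph{some} Taylor coefficient is nonzero is logically equivalent to showing $M_A\not\equiv 0$, which is the thing you set out to prove. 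As written, the argument has a genuine gap at its centre. (The surrounding scaffolding --- the formula for $\pi(\W_i^\infty)$, the removability of the singularity at $\lambda=1$, and the computation of $v_i$ --- is correct, and the observation that $\lambda=1$ is itself a zero of $M_A$ but a discrete zero set still yields a right-neighbourhood $(1-\delta_k',1)$ free of other zeros is fine.)

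The paper sidesteps this difficulty entirely by expanding at $\lambda\to 0$ rather than $\lambda\to 1$. After normalising so that $\p_0=\mathbf{0}$ and $\p_1,\ldots,\p_d$ are linearly independent, one uses the fact that each $\W_i$ begins with $0^{k-1}i$: since $S_0(\x)=\lambda\x$, one has $\pi(\W_i^\infty)=\lambda^{k-1}\pi(i\a^i)$ for some tail $\a^i$, and $\pi(i\a^i)=(1-\lambda)\p_i+\lambda\pi(\a^i)\to\p_i$ as $\lambda\to 0$. Thus $\lambda^{-(k-1)}M(\lambda)$ converges to the matrix with rows $\p_1,\ldots,\p_d$, whose determinant is nonzero by hypothesis; hence the rational function $P(\lambda)=\det M(\lambda)$ is not identically zero and has only finitely many roots. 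No combinatorial analysis of the Champernowne words is needed because at $\lambda=0$ only the \emph{leading} digit of each $\W_i$ survives, and those leading digits (after stripping the common $0^{k-1}$ prefix) are precisely $0,1,\ldots,d$. Your expansion at $\lambda=1$ instead sees an \emph{average} over all digits of $\W_i$ at zeroth order (everything collapses to $\bar\p$) and a weighted average at first order, which entangles the cyclic structure of the $\W_i$ with the geometry of $F$ in a way that is much harder to control. If you want to salvage the $\lambda\to 1$ route, you would have to actually prove the linear-independence claim for some subset $A$, which I expect to be considerably more delicate than the paper's one-line $\lambda\to 0$ limit; switching to the $\lambda\to 0$ expansion closes the gap immediately.
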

\begin{proof}
	 By our underlying assumptions we know that $F=\{\p_0,\ldots,\p_n\}$ is not contained in an $(d-1)$-dimensional affine subspace of $\mathbb{R}^d.$ As such we may assume without loss of generality that $\p_0=\mathbf{0}$ and $\p_1,\ldots, \p_d$ are linearly independent.
	
	 By Proposition \ref{generic k} to prove our result it suffices to show that for all but at most finitely many values of $\lambda \in \mathcal{M}_k$ the set $\{\pi(\W_i^{\infty})\}_{i\in\D}$ is not contained in a $(d-1)$-dimensional affine subspace. Consider the set of fixed points $\{\pi(\W_0^{\infty}),\pi(\W_1^{\infty}),\ldots, \pi(\W_{d}^{\infty})\}.$ To prove $\{\pi(\W_i^{\infty})\}_{i=0}^d$ is not contained in a $(d-1)$-dimensional affine subspace it suffices to show that  the vectors $\{\pi(\W_1^{\infty})-\pi(\W_0^{\infty}),\ldots, \pi(\W_{d}^{\infty})-\pi(\W_0^{\infty})\}$ are linearly independent. Consider the matrix whose rows are made up of these vectors:
	$$M(\lambda):=\begin{pmatrix} 
	\pi(\W_1^{\infty})-\pi(\W_0^{\infty})  \\
	\cdots \\
	\cdots \\
	\pi(\W_{d}^{\infty})-\pi(\W_0^{\infty})
	\end{pmatrix}$$ Consider the function $P(\lambda):=Det(M(\lambda))$. The vectors $\{\pi(\W_1^{\infty})-\pi(\W_0^{\infty}),\ldots, \pi(\W_{d}^{\infty})-\pi(\W_0^{\infty})\}$ are linearly independent if and only if $P(\lambda)\neq 0$. It therefore suffices to show that $P(\lambda)\neq 0$ for all but at most finitely many values of $\lambda \in \mathcal{M}_k$. 
	
	For each $i\in \D$ the vector $\pi(\W_i^{\infty})$ consists of $d$ entries each taking the form $p(\lambda)/q(\lambda)$ for two polynomials $p,q\in\mathbb{R}[x].$ This follows since each entry within $\pi(\W_i^{\infty})$ can be expressed as a geometric series in $\lambda$. Alternatively, one could see this as a consequence of the fact that $\pi(\W_i^{\infty})$ is the unique fixed point of $S_{\W_i}$. It follows from the definition of the determinant that $P(\lambda)=f(\lambda)/g(\lambda)$ for some $f,g\in\mathbb{R}[x].$ Importantly $P(\lambda)=0$ if and only if $f(\lambda)=0$. The polynomial $f$ either has finitely many roots or is the constant function zero. We now show that $f$ is not the constant zero function. 
	
Recall from the definition that $\W_i$ begins with $0^{k-1}i.$ Since we've assumed $\p_0=\mathbf{0}$ it follows from the definition of the coding map $\pi$ that 
$$\lambda^{-(k-1)}M(\lambda)=\begin{pmatrix} 
\pi(1\a^1)-\pi(0\a^0)  \\
\cdots \\
\cdots \\
\pi(d\a^{d})-\pi(0\a^0)
\end{pmatrix}$$ for some infinite sequences $\a^0,\ldots,\a^{d}\in \D^{\mathbb{N}}$. It follows from the definition of $\pi$ that as $\lambda \to 0$ we have $\pi(i\a^{i})\to \p_i,$ for each $i\in \D$. Therefore $$\lambda^{-(k-1)}M(\lambda)\to \begin{pmatrix} 
\p_1 \\
\cdots \\
\cdots \\
\p_d
\end{pmatrix}$$ as $\lambda \to 0$. Since the vectors $\p_1,\ldots,\p_d$ are linearly independent, it follows that 
\[Det(\lambda^{-(k-1)}M(\lambda))=\lambda^{-(k-1)d}P(\lambda)\neq 0\] for all $\lambda$ sufficiently close to $0$. Therefore $f(\lambda)$ is not the constant zero polynomial, and so $P(\la)$ has finitely many roots. This completes our proof.
\end{proof}
Note that Proposition \ref{prop1} is a weak version of Theorem \ref{k normal theorem}. To prove the full theorem we will need Theorem \ref{universal theorem}.

The following theorem applies when our contraction ratios aren't equal. 
\begin{theorem}
	Let $k\geq 1$. Within $\mathcal{P}_k$ there exists an open dense set $\mathcal{O}$ such that for any $(\lambda_0,\ldots,\lambda_n)\in \mathcal{O}$ the set $X_k$ contains an open dense subset of $X$ and Lebesgue almost every $\x\in X$ is contained in $X_k$.
\end{theorem}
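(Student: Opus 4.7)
The plan is to invoke Proposition \ref{generic k} and reduce the theorem to showing that the algebraic condition ``$\{\pi(\W_i^{\infty})\}_{i\in\D}$ is not contained in a $(d-1)$-dimensional affine subspace'' holds on an open dense subset of $\mathcal{P}_k$. After a linear change of coordinates (as in the proof of Proposition \ref{prop1}) I would assume $\p_0=\mathbf{0}$ and that $\p_1,\ldots,\p_d$ are linearly independent, and would form the $d\times d$ matrix
\[
M(\la_0,\ldots,\la_n):=\begin{pmatrix} \pi(\W_1^{\infty})-\pi(\W_0^{\infty})\\ \vdots \\ \pi(\W_d^{\infty})-\pi(\W_0^{\infty})\end{pmatrix}.
\]
Affine independence of $\{\pi(\W_i^{\infty})\}_{i=0}^d$ is equivalent to $P(\la_0,\ldots,\la_n):=\det M(\la_0,\ldots,\la_n)\neq 0$, and affine independence of this subfamily is more than enough to conclude the full family $\{\pi(\W_i^{\infty})\}_{i\in\D}$ is not contained in a $(d-1)$-dimensional affine subspace.

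Next I would observe that each $\pi(\W_i^{\infty})$ is the unique fixed point of the similitude $S_{\W_i}$, and that as in Proposition \ref{prop1} its coordinates are rational functions in the variables $\la_0,\ldots,\la_n$; the denominators are polynomials that do not vanish on $\mathcal{P}_k$ since the fixed points are always well-defined there. Consequently $P$ is a rational function whose numerator is some polynomial $f\in\mathbb{R}[x_0,\ldots,x_n]$, and $P\neq 0$ at a point of $\mathcal{P}_k$ if and only if $f\neq 0$ there.

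The key step is showing that $f$ is not the zero polynomial. This is where I would plug in the already established one-parameter result. Restricting to the diagonal $\la_0=\cdots=\la_n=\la$, the proof of Proposition \ref{prop1} shows that the resulting single-variable rational function $P(\la,\ldots,\la)$ is not identically zero on $\mathcal{M}_k$ (it has only finitely many zeros). Hence $f$ is nonzero on the diagonal and therefore is not the zero polynomial in $\mathbb{R}[x_0,\ldots,x_n]$.

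To conclude, the zero locus $Z:=\{f=0\}$ is a proper algebraic subvariety of $\mathbb{R}^{n+1}$, so it is closed with empty interior and Lebesgue measure zero. Set $\mathcal{O}:=\mathcal{P}_k\setminus Z$. Then $\mathcal{O}$ is relatively open in $\mathcal{P}_k$. For density, note that $\mathcal{P}_k$ is the closure of its interior in $\mathbb{R}^{n+1}$ (since it is defined by a non-strict inequality with a nonempty strict-inequality interior), so every nonempty relatively open subset of $\mathcal{P}_k$ contains a genuine open ball in $\mathbb{R}^{n+1}$, and such a ball must meet the dense open set $\mathbb{R}^{n+1}\setminus Z$. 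For each $(\la_0,\ldots,\la_n)\in\mathcal{O}$, the fixed points $\{\pi(\W_i^{\infty})\}_{i\in\D}$ are not contained in a $(d-1)$-dimensional affine subspace, and Proposition \ref{generic k} gives the desired topological and metric largeness of $X_k$. The only real obstacle is the nontriviality of $f$, and that is already settled by Proposition \ref{prop1}; the rest is routine algebraic and topological bookkeeping.
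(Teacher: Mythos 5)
Your proof is correct, and the overall skeleton is the same as the paper's: reduce to Proposition \ref{generic k}, set $P(\lambda_0,\ldots,\lambda_n)=\det M$ after normalising $\p_0=\mathbf{0}$, and show the non-vanishing locus of $P$ is open and dense in $\mathcal{P}_k$. Openness is by continuity in both cases. Where you genuinely diverge from the paper is the density argument. You observe that $P$ has a polynomial numerator $f\in\mathbb{R}[x_0,\ldots,x_n]$ whose restriction to the diagonal is already known to be nonzero by Proposition \ref{prop1}; hence $f$ is a nonzero multivariate polynomial, so its zero set $Z$ is closed with empty interior (and measure zero), and $\mathcal{O}=\mathcal{P}_k\setminus Z$ does the job. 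The paper instead fixes a target $(\lambda_0,\ldots,\lambda_n)\in\mathcal{P}_k$ and an $\epsilon>0$, chooses an exponent vector $(k_0,\ldots,k_n)$ and an interval $I$ so that the monomial curve $\lambda\mapsto(\lambda^{k_0},\ldots,\lambda^{k_n})$ lands within $\epsilon$ of the target for every $\lambda\in I$, and then reruns the $\lambda\to 0$ limit argument from Proposition \ref{prop1} to see that $P$ restricted to this curve is a nonzero univariate rational function, hence has a nonvanishing point inside $I$. Your approach is arguably more economical: one single restriction (the diagonal) and a standard fact about polynomial zero loci replace a family of ad hoc approximating curves, and you do not need to re-verify the $\lambda\to 0$ computation along each curve. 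The paper's version is more constructive and, incidentally, shows the density can be witnessed by points on explicit monomial curves, but that extra structure is not needed here. One small point worth making explicit in your write-up, which you do gesture at: $\mathcal{P}_k$ is the closure of its interior in $\mathbb{R}^{n+1}$, so ``$\mathcal{O}$ meets every relatively open ball'' indeed reduces to ``$\mathbb{R}^{n+1}\setminus Z$ is dense.''
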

\begin{proof}
	As in the proof of Proposition \ref{prop1} we may assume $\p_0=\mathbf{0}$ and the vectors $\p_1,\ldots,\p_d$ are linearly independent. Let $$P(\lambda_0,\ldots,\lambda_n):=\begin{vmatrix} 
	\pi(\W_1^{\infty})-\pi(\W_0^{\infty})  \\
	\cdots \\
	\cdots \\
	\pi(\W_{d}^{\infty})-\pi(\W_0^{\infty})
	\end{vmatrix}.$$  By Proposition \ref{generic k} and similar arguments to those used in the proof of Proposition \ref{prop1}, it suffices to show that the set of $(\lambda_0,\ldots,\lambda_n)$ such that $P(\lambda_0,\ldots,\lambda_n)\neq 0$ is an open dense subset of $\mathcal{P}_k$. By continuity the set of $(\lambda_0,\ldots,\lambda_n)\in \mathcal{P}_k$ such that $P(\lambda_0,\ldots,\lambda_n)\neq 0$ is an open set.  It remains to show the density part of our result. Fix $(\lambda_0,\ldots,\lambda_n)\in \mathcal{P}_k$ and let $\epsilon>0$ be arbitrary. There exists an interval $I\subset (0,1)$ and integers $\{k_i\}_{i\in \D}$ such that for any $\lambda\in I$ we have 
	\[\lambda^{k_i}\in(\lambda_i-\epsilon,\lambda_i+\epsilon)\quad \textrm{for each }i\in \D.\] Replicating the argument given in the proof of Proposition \ref{prop1}, it can be shown that $P(\lambda^{k_0},\ldots,\lambda^{k_n})=f(\lambda)/g(\lambda)$ for some $f,g\in \mathbb{R}[x]$,  where $f$ is not the constant zero polynomial. Therefore $P(\lambda^{k_0},\ldots,\lambda^{k_n})$ has finitely many zeros and we can find $\lambda_*\in I$ such that $P(\lambda_{*}^{k_0},\ldots,\lambda_{*}^{k_n})\neq 0.$ Since $\lambda_{*}^{k_i}\in(\lambda_i-\epsilon,\lambda_i+\epsilon)$ for each $i\in \D$ and $\epsilon$ is arbitrary, our result follows.
\end{proof}

\begin{theorem}
	\label{d=1 theorem}
	Assume $d=1$ and $k\geq 1$. Then for any $(\lambda_0,\ldots,\lambda_n)\in\mathcal{P}_k$ the set $X_k$ contains an open dense subset of $X$ and Lebesgue almost every $\x\in X$ is contained in $X_k$.
\end{theorem}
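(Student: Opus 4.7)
The strategy is to apply Proposition \ref{generic k}, which reduces the theorem to showing that the points $\{\pi(\W_i^\infty)\}_{i\in\D}$ are not contained in a $0$-dimensional affine subspace of $\mathbb R$, i.e., that they are not all equal. It suffices to prove $\pi(\W_0^\infty)\ne\pi(\W_1^\infty)$, which I would attack by contradiction, combining a cyclic shift identity with the standard series representation of $\pi$.

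The first ingredient is that $\W_1$ is the cyclic shift of $\W_0$ by $k$ positions, so $\W_1^\infty = \sigma^k(\W_0^\infty)$, where $\sigma$ denotes the left shift. Since the first $k$ digits of $\W_0^\infty$ form $\w_0 = 0^k$, this yields $\pi(\W_1^\infty) = S_0^{-k}(\pi(\W_0^\infty))$. If $\pi(\W_0^\infty) = \pi(\W_1^\infty) = c$, then $S_0^k(c)=c$, which pins $c$ to the unique fixed point $\p_0$ of $S_0^k$; in particular $\pi(\W_0^\infty)=\p_0$.

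The second ingredient is the telescoping identity
\[
\pi(\a) = \sum_{j=1}^\infty \la_{a_1}\cdots\la_{a_{j-1}}(1-\la_{a_j})\p_{a_j},
\]
obtained by iterating $S_{a_1}\circ\cdots\circ S_{a_j}(\mathbf 0)$ and passing to the limit. Applied to $\a=\W_0^\infty$ this writes $\pi(\W_0^\infty)$ as a convex combination of $\{\p_i\}_{i\in\D}$ with strictly positive coefficients summing to $1$; each $\p_i$ receives positive total weight because $\W_0$ enumerates all of $\D^k$ and hence contains every digit. After a preliminary permutation of the indices of the IFS so that $\p_0=\min_{i\in\D}\p_i$ while some $\p_j>\p_0$ (possible since $F$ contains at least two distinct points), this convex combination strictly exceeds $\min_i\p_i=\p_0$, contradicting $\pi(\W_0^\infty)=\p_0$.

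The only point requiring genuine care is the label permutation, because the words $\W_i$ are defined through the lexicographic order of $\D^k$ and therefore change under relabeling. Nevertheless, the structural features I rely on — cyclic shift by $k$, $\w_0=0^k$, and every digit appearing within $\W_0$ — persist in any labeling, and $X_k$, $\mathcal P_k$, and the property of being $k$-simply normal are all invariant under a permutation of the alphabet. Once this bookkeeping is in place, the two steps above close the argument and Proposition \ref{generic k} delivers the theorem.
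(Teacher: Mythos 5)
Your proof is correct and takes essentially the same route as the paper: both invoke Proposition \ref{generic k} and reduce to showing $\pi(\W_0^\infty)\neq\pi(\W_1^\infty)$ via the cyclic-shift identity $\W_0^\infty=0^k\W_1^\infty$ (hence $\pi(\W_1^\infty)=T_0^k(\pi(\W_0^\infty))$), combined with the strict positivity of $\pi(\W_0^\infty)-\p_0$ after relabeling so that $\p_0=\min_i\p_i$. The paper argues directly, writing $\pi(\W_1^\infty)=\lambda_0^{-k}\pi(\W_0^\infty)>\pi(\W_0^\infty)$, whereas you frame it as a contradiction and spell out the convex-combination expansion of $\pi$ and the permutation-invariance bookkeeping more explicitly, but the substance is identical.
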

\begin{proof}
Verifying $\{\pi(\W_i^{\infty})\}_{i\in \D}$ is not contained in a $(d-1)$-dimensional affine subspace is much more straightforward when $d=1$. We simply have to prove that there exists $i,j\in\mathcal{D}$ such that $\pi(\W_i^{\infty})\neq \pi(\W_j^{\infty}).$ We may assume without loss of generality that $\p_0=0,$ $\p_i\geq 0$ for all $i\in\mathcal{D}$, and there exists $i\in \D$ such that $\p_i>0$. Since there exists $\p_i>0$ it follows that $\pi(\W_0^{\infty})>0.$ It then follows from the construction of $\W_0$ and $\W_1$ that $\lambda_{0}^{-k}(\pi(\W_{0}^{\infty}))=\pi(\W_{1}^{\infty})$.  Since $\lambda_{0}\in(0,1)$ we must have $\pi(\W_{0}^{\infty})< \pi(\W_{1}^{\infty})$. By Proposition \ref{generic k} our result follows.
\end{proof}

\section{Universal codings}
\label{sec4}
 Universal codings were originally introduced by Erd\H{o}s and Komornik in \cite{EK}. They were interested in $q$-expansions of real numbers. These are defined as follows. Given $q\in (1,n+1]$ we say that $\a\in \{0,\ldots,n\}^{\mathbb{N}}$ is a $q$-expansion of $x$ if $$x=\sum_{j=1}^{\infty}\frac{a_j}{q^j}.$$ An $x$ has a $q$-expansion if and only if $x\in[0,\frac{n}{q-1}]$. Expansions of this type exhibit very different behaviour to the well known binary, ternary, decimal expansions. We refer the reader to the survey \cite{Ko} for more on these expansions. When studying $q$-expansions one naturally ends up studying the IFS $\{\frac{x+i}{q}\}_{i=0}^n$. A sequence $\a$ is a coding of $x$ with respect to this IFS if and only if $\a$ is a $q$-expansion of $x$. As such for this IFS we have $X=[0,\frac{n}{q-1}]$.
 
 Erd\H{o}s and Komornik gave necessary conditions for guaranteeing that every $x\in(0,\frac{n}{q-1})$ has a universal coding (see Theorem \ref{EK} below). To prove this result they studied the following parameterised family of sets. To each $q\in(1,n+1]$ let
$$Z_{n}(q):=\Big\{\sum_{j=0}^{m}a_jq^j: m\in\mathbb{N}, a_j\in\{0,\ldots,n\}\Big\}.$$ 
Since $Z_n(q)$ is a discrete set,  it can be written as $\{y_{l,n}(q)\}_{l=1}^{\infty}$ where $y_{l,n}(q)<y_{l+1,n}(q)$ for all $l\in\mathbb{N}$. To study the distribution of $Z_{n}(q)$ within $\mathbb{R}$ it is natural to consider the quantities:
\begin{align*}
l_n(q)&:=\liminf_{l\to\infty}\left(y_{l+1,n}(q)-y_{l,n}(q)\right)\\
L_n(q)&:=\limsup_{l\to\infty}\left(y_{l+1,n}(q)-y_{l,n}(q)\right).
\end{align*}Much has been written on the quantities $l_n(q)$ and $L_n(q),$ see \cite{AkiKom, EK, Feng, Ko, SidSol} and the references therein. In \cite{EK} it was shown that $l_n(q)>0$ whenever $q$ is a Pisot number. Recall that a Pisot number is an algebraic integer whose Galois conjugates all have modulus strictly less than one. This result gave rise to the conjecture that $l_n(q)>0$ if and only if $q$ is a Pisot number. This conjecture was shown to be true in a recent paper by Feng \cite{Feng}, who built upon previous work of Akiyama and Komornik \cite{AkiKom}. Feng's result also has the following useful implication for the quantity $L_n(q)$.
\begin{theorem}[Feng \cite{Feng}]
	\label{Feng thm}
	If $q\in(1,\sqrt{n+1})$ and $q^2$ is not a Pisot number,  then $L_n(q)=0$. In particular, if $q\in(1,\sqrt{1.3247\ldots})$,  then $L_n(q)=0$.
\end{theorem}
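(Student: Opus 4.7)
The plan is to deduce the theorem from Feng's main result stated just above, namely that $l_n(q)>0$ if and only if $q$ is a Pisot number. First, I would verify that this theorem applies in base $q^2$: the hypothesis $q\in(1,\sqrt{n+1})$ gives $q^2\in(1,n+1)$, so the theory of $q^2$-expansions with digit set $\{0,\ldots,n\}$ is at our disposal, and the assumption that $q^2$ is not Pisot immediately yields $l_n(q^2)=0$. Equivalently, for every $\varepsilon>0$ there exists a nonzero polynomial $R\in\mathbb{Z}[x]$ with coefficients in $\{-n,\ldots,n\}$ such that $0<R(q^2)<\varepsilon$; any such $R$ arises as the digit-difference polynomial of two consecutive elements of $Z_n(q^2)$ whose gap is less than $\varepsilon$.

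Next, I would establish the structural identity
\[ Z_n(q)=Z_n(q^2)+q\cdot Z_n(q^2) \]
by splitting any finite sum $\sum_{j=0}^m a_j q^j$ into its even- and odd-indexed parts: the even part equals $\sum_k a_{2k}(q^2)^k\in Z_n(q^2)$ and the odd part equals $q\sum_k a_{2k+1}(q^2)^k\in q\cdot Z_n(q^2)$. Thus every element of $Z_n(q)$ admits a representation $E+qO$ with $E,O\in Z_n(q^2)$, and conversely every such sum lies in $Z_n(q)$.

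The crux of the argument is then to combine these two facts to upgrade the \emph{liminf}-statement $l_n(q^2)=0$ to the \emph{limsup}-statement $L_n(q)=0$. Fixing $\varepsilon>0$ and a polynomial $R$ with $0<R(q^2)<\varepsilon$ and coefficients in $\{-n,\ldots,n\}$, one observes that for any $z=E+qO\in Z_n(q)$ whose even-indexed digits have enough slack to absorb the coefficients of $R$ without leaving $\{0,\ldots,n\}$, the point $z+R(q^2)$ is again in $Z_n(q)$, so that $z$ has a companion within $\varepsilon$. The hard part will be to convert this abundance of close pairs into the statement that \emph{every} sufficiently large real number lies within $\varepsilon$ of $Z_n(q)$; the intended mechanism is to exploit the rich combinatorial flexibility of the $E+qO$ decomposition, using the (essentially unrestricted) odd-indexed digits $O$ to reach any prescribed large scale while keeping the even-indexed digits $E$ compatible with the correction $R$. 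Upgrading a \emph{liminf}-style small-gap conclusion in $Z_n(q^2)$ uniformly to all sufficiently large scales in $Z_n(q)$ is the principal obstacle.

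Finally, the ``in particular'' clause is immediate: by Siegel's classical theorem, the smallest Pisot number is the plastic number $\rho\approx 1.3247\ldots$, the unique real root of $x^3-x-1$. For $q\in(1,\sqrt\rho)$ we have $q^2<\rho$, so $q^2$ lies below every Pisot number and therefore cannot itself be Pisot; the first part of the theorem then gives $L_n(q)=0$.
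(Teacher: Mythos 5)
This statement is cited in the paper as a theorem of Feng; the paper does not supply a proof, so there is no ``paper's own proof'' for me to compare against. Evaluating your attempt on its own terms: you have the right ingredients, but you have not produced a proof, and you say so yourself.

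Your setup is sensible. You correctly invoke Feng's characterisation $l_n(q)>0\iff q$ Pisot in base $q^2$, verifying that $q\in(1,\sqrt{n+1})$ puts $q^2\in(1,n+1)$ so the characterisation applies and gives $l_n(q^2)=0$. The identity $Z_n(q)=Z_n(q^2)+q\,Z_n(q^2)$ by splitting into even and odd indexed digits is also correct. And the ``in particular'' clause is handled correctly: the smallest Pisot number is the plastic number $\rho\approx 1.3247$, root of $x^3-x-1$ (this is Siegel's theorem), and $q^2<\rho$ forces $q^2$ non-Pisot, with $\sqrt{\rho}<\sqrt{2}\le\sqrt{n+1}$ guaranteeing the dimension condition.

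The gap, however, is genuine and is precisely where you flag it. From $l_n(q^2)=0$ you know only that there exist \emph{some} pairs of consecutive elements of $Z_n(q^2)$ with gap less than $\epsilon$, infinitely often; this does not control what happens between those rare close pairs, where the gaps may remain bounded away from $0$. Your observation that a point $z=E+qO$ with slack in its even digits has a companion $z+R(q^2)$ within $\epsilon$ only produces close pairs in $Z_n(q)$ near certain elements, i.e.\ it again gives a $\liminf$-type conclusion, not the $\limsup$-type conclusion $L_n(q)=0$, which requires that \emph{every} sufficiently large real lies within $\epsilon$ of $Z_n(q)$. The sentence ``using the (essentially unrestricted) odd-indexed digits $O$ to reach any prescribed large scale while keeping the even-indexed digits $E$ compatible with the correction $R$'' gestures at the needed mechanism but does not supply it: you must explain why the union $\bigcup_{y\in Z_n(q^2)}\bigl(y+qZ_n(q^2)\bigr)$ has no large gaps, given only that $Z_n(q^2)$ has infinitely many small gaps (and, separately, bounded gaps since $q^2<n+1$). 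That interleaving argument is the substance of the result attributed to Feng (building on Erd\H{o}s--Komornik and Akiyama--Komornik), and without it the proof is incomplete. To finish, you would need either to import that lemma explicitly (e.g.\ a statement of the form: for $q\in(1,\sqrt{n+1})$, $l_n(q^2)=0$ implies $L_n(q)=0$), or to reconstruct the density argument, typically via a greedy/quasi-greedy expansion in base $q$ whose truncation error is then corrected at even scales by the small polynomial $R$, with care taken that the corrections keep all digits inside $\{0,\dots,n\}$.
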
 Note that $x'=1.3247\ldots$ is the smallest Pisot number. Its minimal polynomial is $x^3-x-1$.

The significance of the quantity $L_n(q)$ for us is demonstrated in the following result of Erd\H{o}s and Komornik. 
\begin{theorem}[Erd\H{o}s and Komornik \cite{EK}]
	\label{EK}If $L_n(q)=0$, then $X_{uni}=(0,\frac{n}{q-1})$. 
\end{theorem}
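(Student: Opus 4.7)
The plan is to fix $x\in(0,n/(q-1))$ and an enumeration $\mathbf{w}_1,\mathbf{w}_2,\ldots$ of all finite words in $\{0,\ldots,n\}^{*}$, and to construct a universal $q$-expansion $\a=a_1a_2\cdots$ of $x$ by appending at each stage $m$ some filler digits followed by $\mathbf{w}_m$, while keeping the scaled residual $\rho_m := q^{N_m}\bigl(x-\sum_{j=1}^{N_m}a_j q^{-j}\bigr)$ strictly inside $(0,n/(q-1))$. The hypothesis $L_n(q)=0$ will enter at the point where one verifies that the required filler digits always exist.

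Writing $\mathbf{w}_m=w_{m,1}\cdots w_{m,k_m}$, $W_m := \sum_{j=1}^{k_m} w_{m,j}\,q^{k_m-j}$ and $Z_n^{(p)}(q):=\{\sum_{i=0}^{p-1}c_i q^i: c_i\in\{0,\ldots,n\}\}\subseteq Z_n(q)$, an immediate rearrangement shows that after appending $p_m$ filler digits $b_1,\ldots,b_{p_m}$ and then $\mathbf{w}_m$, the condition $\rho_m\in(0,n/(q-1))$ is equivalent to
\[
D_m \;:=\; \sum_{j=1}^{p_m} b_j\,q^{p_m-j} \;\in\; Z_n^{(p_m)}(q)\;\cap\;\bigl(q^{p_m}\rho_{m-1}-q^{-k_m}(n/(q-1)+W_m),\;q^{p_m}\rho_{m-1}-q^{-k_m}W_m\bigr).
\]
The target interval has the fixed positive length $q^{-k_m}n/(q-1)$ depending only on $\mathbf{w}_m$, while its centre, of order $q^{p_m}\rho_{m-1}$, drifts to $+\infty$ as $p_m\to\infty$.

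The bridge to the hypothesis is the elementary observation that any $y\in Z_n(q)$ with $y<q^p$ admits a representation of length at most $p$: writing $y=\sum_{j=0}^m a_j q^j$ with $a_m\ge 1$ forces $q^m\le y<q^p$ and hence $m\le p-1$. Consequently $Z_n(q)\cap[0,q^p)\subseteq Z_n^{(p)}(q)$, and when $\rho_{m-1}<1$ the target interval lies inside $[0,q^{p_m})$ for all sufficiently large $p_m$, where the two sets coincide. Invoking $L_n(q)=0$ to obtain a threshold past which consecutive elements of $Z_n(q)$ are separated by less than any prescribed $\eta<q^{-k_m}n/(q-1)$, and taking $p_m$ large enough that the interval's left endpoint exceeds this threshold, one finds an element of $Z_n(q)$, hence of $Z_n^{(p_m)}(q)$, strictly inside the target interval; this determines the filler digits.

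The side condition $\rho_{m-1}<1$ is easily maintained. An initial greedy prelude reduces $\rho_0=x$ into $[0,1)$ in $O\bigl(\log(1/(n/(q-1)-x))\bigr)$ steps, since while $\rho>n/q$ the greedy iterate $\rho\mapsto q\rho-n$ satisfies $\rho_j = n/(q-1) - q^j(n/(q-1)-\rho_0)$ and so drops below $n/q$ geometrically, and one further greedy step pushes $\rho$ into $[0,1)$. At each subsequent stage, among the admissible values of $D_m$ furnished above one selects the one for which the new scaled residual lies in $(0,1)$ again; this is possible because $\rho_m$ depends linearly on $D_m$ and, as $D_m$ ranges over the target interval, $\rho_m$ sweeps out all of $(0,n/(q-1))$, so the restriction $\rho_m\in(0,1)$ only asks $D_m$ to lie in a sub-interval of length $q^{-k_m}$, to which the same gap argument applies. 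Iterating over all $m$ produces the desired $\a\in X_{uni}$. The main obstacle to make rigorous is the reconciliation between $Z_n(q)$, where the gap hypothesis lives, and its subset $Z_n^{(p_m)}(q)$ of elements representable with at most $p_m$ digits, which is what is actually needed to convert a chosen $D_m$ back into a valid filler string; the safe-region argument above is the device that aligns the two sets, after which the construction closes up and $L_n(q)=0$ does all the quantitative work.
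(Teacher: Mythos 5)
The paper cites this theorem from Erd\H{o}s--Komornik without reproducing a proof, so the natural comparison is with the paper's own $d$-dimensional generalisation, Proposition~\ref{d+1 prop}, whose proof follows the same Erd\H{o}s--Komornik template. Your proposal reproduces that template faithfully in the one-dimensional case: reduce to a small residual, then iterate a step that inserts a block drawn from $Z_n(q)$ to absorb the desired word $\mathbf{w}_m$ while keeping the scaled residual in a controlled range. Your lemma $Z_n(q)\cap[0,q^p)\subseteq Z_n^{(p)}(q)$ is precisely the paper's observation that $p<l$ in the proof of Proposition~\ref{d+1 prop}, and your invariant $\rho_m\in(0,1)$ plays the role of the paper's invariant $\x\in(0,1-\la]^d$ coming from Lemma~\ref{greedy}. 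The computation of the admissible interval for $D_m$, the use of $L_n(q)=0$ to find an element of $Z_n(q)$ in that interval once its left endpoint is pushed past the relevant threshold, and the further restriction to a sub-interval of length $q^{-k_m}$ to keep the next residual in $(0,1)$, all check out.

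There is one small repair to make. Your greedy prelude only lands $\rho$ in $[0,1)$, not $(0,1)$: when $x$ has a finite greedy expansion (e.g.\ $x=1/q$) the prelude terminates with $\rho=0$, and then the target interval collapses and the construction cannot continue. You should use a quasi-greedy choice (take the largest digit $a$ with $q\rho-a$ \emph{strictly} positive); this can only get stuck at $\rho=1$ if $q$ is an integer, which is excluded since $L_n(q)=0$ forces $q\notin\mathbb{Z}$. With that patch the argument is sound and is essentially the same route the paper takes in the general $d$ case.
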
 In this section we will always assume that $\Phi$ is homogeneous. Under this assumption it can be shown that the coding map $\pi$ takes the form 
\begin{equation}
\label{sum}
\pi(\a)=(1-\lambda)\sum_{j=1}^{\infty}\lambda^{j-1}\p_{a_j}.
\end{equation} In what follows we make use of the following family of expanding maps. To each $i\in \D$ let $$T_i(\x)=\frac{\x-(1-\lambda)\p_i}{\lambda}.$$ Note that $T_i$ is simply the inverse of $S_i$. Given $\a=(a_i)_{i=1}^j\in \D^{*}$ we let $T_\a$ denote the map $T_{\a_j}\circ \cdots \circ T_{\a_1}$. To each $x\in X$ we associate the set $$\Omega_{\Phi}(\x):=\{\a\in \D^{\mathbb{N}}:(T_{a_j}\circ \cdots \circ T_{a_1})(\x)\in X,\,  \forall j\in\mathbb{N}\}.$$ Adapting the arguments of \cite{BakG} the following lemma can be shown to hold.
\begin{lemma}
\label{bijection lemma}
$\Sigma_{\Phi}(\x)=\Omega_{\Phi}(\x)$
\end{lemma}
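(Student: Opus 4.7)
The plan is to prove the two inclusions separately, with the central observation being that $T_i=S_i^{-1}$ and that $\pi$ intertwines the shift map $\sigma$ on $\D^{\mathbb{N}}$ with the maps $T_i$ in the sense that $\pi(\sigma\a) = T_{a_1}(\pi(\a))$ for every $\a\in \D^{\mathbb{N}}$. The identity $\pi(\sigma\a)=T_{a_1}(\pi(\a))$ follows immediately from \eqref{sum}, since writing out the geometric series gives $\pi(\a)=(1-\lambda)\p_{a_1}+\lambda\pi(\sigma\a)$, and solving for $\pi(\sigma\a)$ produces the definition of $T_{a_1}$ applied to $\pi(\a)$.

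For the inclusion $\Sigma_{\Phi}(\x)\subseteq \Omega_{\Phi}(\x)$, suppose $\a\in\Sigma_{\Phi}(\x)$, so that $\pi(\a)=\x$. Iterating the intertwining identity gives $(T_{a_j}\circ\cdots\circ T_{a_1})(\x)=\pi(\sigma^j\a)$ for every $j\in\mathbb{N}$, and since $\pi(\D^{\mathbb{N}})=X$ the right-hand side always belongs to $X$. Hence $\a\in\Omega_{\Phi}(\x)$.

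For the reverse inclusion $\Omega_{\Phi}(\x)\subseteq \Sigma_{\Phi}(\x)$, fix $\a\in\Omega_{\Phi}(\x)$ and set $\x_j:=(T_{a_j}\circ\cdots\circ T_{a_1})(\x)\in X$. Inverting, we have $\x=(S_{a_1}\circ\cdots\circ S_{a_j})(\x_j)$. Since $\Phi$ is homogeneous, $S_{a_1}\circ\cdots\circ S_{a_j}$ is a similitude with contraction ratio $\lambda^j$, so
\[
|\x-(S_{a_1}\circ\cdots\circ S_{a_j})(\mathbf{0})|=|(S_{a_1}\circ\cdots\circ S_{a_j})(\x_j)-(S_{a_1}\circ\cdots\circ S_{a_j})(\mathbf{0})|=\lambda^j|\x_j|.
\]
Because $X$ is bounded, $\sup_j|\x_j|<\infty$, and $\lambda^j\to 0$ forces $(S_{a_1}\circ\cdots\circ S_{a_j})(\mathbf{0})\to \x$. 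Thus $\pi(\a)=\x$, i.e., $\a\in \Sigma_{\Phi}(\x)$.

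There is no real obstacle here; the lemma is essentially a formal consequence of the fact that $T_i$ inverts $S_i$ together with the boundedness of $X$ and the uniform contraction $\lambda<1$. The only thing to be careful about is that one must verify $\pi(\sigma\a)=T_{a_1}(\pi(\a))$ holds for \emph{every} $\a$, not merely those with $\pi(\a)\in X$, which is the reason the intertwining identity is read off directly from the closed-form expression \eqref{sum} rather than from any dynamical interpretation.
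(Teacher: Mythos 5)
Your proof is correct; note that the paper itself does not prove the lemma but simply cites \cite{BakG}, so your argument supplies a self-contained proof of what the paper leaves to the reader. The conjugacy identity $\pi(\sigma\a)=T_{a_1}(\pi(\a))$, together with the boundedness of $X$ and the uniform contraction $\lambda<1$, is exactly the standard mechanism, and both inclusions go through as you have written them. One minor remark: your closing caveat about verifying the identity for $\a$ with $\pi(\a)\notin X$ is vacuous, since $\pi$ maps $\D^{\mathbb{N}}$ onto $X$ by construction, so $\pi(\a)\in X$ always; the identity is indeed most cleanly read off from \eqref{sum} as you do, but not for the reason you give.
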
 Having the dynamical interpretation of a coding provided by Lemma \ref{bijection lemma} helps simplify certain arguments. The purpose of this section is to prove the following result on the size of $X_{uni}$ which holds for IFSs acting on $\mathbb{R}^d$ consisting of $d+1$ maps.

\begin{proposition}
	\label{d+1 prop}
	Suppose $\Phi=\set{S_i}_{i=0}^d$ consists of $d+1$ maps and the fixed points $\{\p_0,\ldots,\p_d\}$ are not contained in a $(d-1)$-dimensional affine subspace. If $\lambda\in(2^{-1/2d},1)$ and $\lambda^{-2d}$ is not a Pisot number, then $X_{uni}=int(X)$. In particular,  if $\lambda\in(1.3247^{-1/2d},1)$, then $X_{uni}=int(X).$
\end{proposition}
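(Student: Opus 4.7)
The plan is to adapt the $1$-dimensional Erd\H{o}s-Komornik strategy (Theorem \ref{EK}) to the present $d$-dimensional setting. First, one checks elementarily that $2^{-1/(2d)} \ge d/(d+1)$ for every $d\ge 1$ (equivalently $(1+1/d)^{2d}\ge 4\ge 2$), so Lemma \ref{no holes lemma} applied with $\la_i=\la$ for every $i$ gives $X=\mathrm{conv}(F)$: the attractor is the full $d$-simplex spanned by the fixed points, and $int(X)$ is non-empty. Passing to the dynamical viewpoint of Lemma \ref{bijection lemma}, it suffices, for each $\x\in int(X)$, to produce $\a\in\D^{\mathbb{N}}$ with $T_{a_j}\circ\cdots\circ T_{a_1}(\x)\in X$ for every $j$ and such that every $\b\in\D^*$ occurs as a subword of $\a$. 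Enumerating $\D^*=\{\b^{(k)}\}_{k\ge 1}$, I would build $\a=\e^{(1)}\b^{(1)}\e^{(2)}\b^{(2)}\cdots$ iteratively: from a current residue $\y_k\in int(X)$, pick a filler word $\e^{(k+1)}$ with $T_{\e^{(k+1)}}(\y_k)\in int(S_{\b^{(k+1)}}(X))$, then set $\y_{k+1}:=T_{\b^{(k+1)}}(T_{\e^{(k+1)}}(\y_k))\in int(X)$ and iterate. The whole construction reduces to establishing the density claim: \emph{for every $\y\in int(X)$ and every non-empty open $V\subseteq int(X)$, there exists $\e\in\D^*$ with $T_\e(\y)\in V$.}

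The second step is to feed Feng's theorem into a $2$-map, $1$-dimensional sub-IFS. Select two distinct $d$-letter words $\mathbf{u},\mathbf{v}\in\D^d$ with $\pi(\mathbf{u}^{\infty})\ne\pi(\mathbf{v}^{\infty})$---for instance $\mathbf{u}=0^d$ and $\mathbf{v}=10^{d-1}$, which works because $\p_0\ne\p_1$. The composed similitudes $S_\mathbf{u},S_\mathbf{v}$ each have contraction ratio $\la^d$ and act on the line $L$ through their fixed points; since $\la^d>2^{-1/2}>1/2$, the attractor $A$ of this $1$-dimensional homogeneous sub-IFS is the full segment between $\pi(\mathbf{u}^{\infty})$ and $\pi(\mathbf{v}^{\infty})$. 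Setting $q:=\la^{-d}$, the hypothesis $\la\in(2^{-1/(2d)},1)$ is exactly $q\in(1,\sqrt{2})$, and $\la^{-2d}=q^2$ is not Pisot by assumption, so Feng's Theorem \ref{Feng thm} applied with $n=1$ gives $L_1(q)=0$. Theorem \ref{EK} applied to the sub-IFS $\{S_\mathbf{u},S_\mathbf{v}\}$ on $L$ then yields that every interior point of $A$ admits a universal $\{\mathbf{u},\mathbf{v}\}$-coding; equivalently, the $\{T_\mathbf{u},T_\mathbf{v}\}$-orbit of any interior point of $A$ is dense in $A$.

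The main obstacle is the final step, namely lifting this $1$-dimensional density along $L$ to the $d$-dimensional density claim needed above. My plan is to exploit the strong-overlap regime: the hypothesis $\la>2^{-1/(2d)}$ forces $X$ to coincide with the full simplex and makes the $T_i$-action very flexible, so short words over the full alphabet can push a residue through large portions of $X$ while the sub-IFS supplies arbitrarily fine navigation along $L$. Concretely, I would repeat the construction of $(\mathbf{u},\mathbf{v})$ with $d$ different pairs $(\mathbf{u}_i,\mathbf{v}_i)$ whose segments $L_i$ span $\mathbb{R}^d$, and then alternate sub-IFS bursts (moving densely along each $L_i$) with short full-alphabet correction words that rotate between the $L_i$'s, arranging after finitely many alternations for $T_\e(\y)$ to land inside $V$ while each intermediate residue remains in $X$. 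Making this geometric assembly quantitatively effective, and in particular simultaneously controlling position and the ``stay in $X$'' constraint, is where I expect the bulk of the technical difficulty to lie.
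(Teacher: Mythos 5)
Your setup (verifying $2^{-1/2d}\ge d/(d+1)$, invoking Lemma \ref{no holes lemma} and Lemma \ref{bijection lemma}, reducing to a density claim, and applying Feng and Erd\H{o}s--Komornik to a two-map sub-IFS to get one-dimensional density) matches the paper's opening moves and is fine. The gap is in the final lifting step, and it is not merely a technical wrinkle: the proposed ``alternate sub-IFS bursts along $L_1,\dots,L_d$ with short correction words'' cannot work as stated. The maps $T_{\mathbf u_i},T_{\mathbf v_i}$ are expansions by the factor $\lambda^{-d}>1$, so the perpendicular distance from $L_i$ is \emph{multiplied} by $\lambda^{-d}$ at every application; the one-dimensional Erd\H{o}s--Komornik density argument therefore only controls the orbit of points lying \emph{exactly} on $L_i$. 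After a burst along $L_1$ the residue sits on $L_1$, and to start a burst along $L_2$ you would need a finite full-alphabet word mapping a given point of $L_1$ \emph{onto} $L_2$. But the $T_\a$-orbit of a fixed point is only countable, while $L_2$ has measure zero, so generically no such correction word exists. The ``span $\mathbb R^d$ and alternate'' plan thus cannot simultaneously keep the residue on the relevant line and rotate between the lines.

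The paper sidesteps dynamical navigation entirely and works algebraically with partial sums. It introduces
\[
Z^d(i,\lambda)=\Big\{(1-\lambda)\sum_{\substack{1\le j\le m\\ j\equiv i\bmod d}}a_j\e_i\lambda^{-j}:m\in\mathbb N,\ a_j\in\{0,1\}\Big\},
\]
assigning each residue class of positions $j\pmod d$ to a fixed coordinate axis, and notes that $Z^d(i,\lambda)$ is a scaled copy of $\{\sum a_j\lambda^{-dj}\}$. Feng's theorem (with $q=\lambda^{-d}\in(1,\sqrt2)$, $q^2$ not Pisot) gives $\epsilon$-density of this one-dimensional set in a half-line, and the Minkowski sum $Z^d(1,\lambda)+\cdots+Z^d(d,\lambda)\subseteq Z^d(\lambda)$ is then $\epsilon$-dense in $\cap_i\{x_i\ge C\}$ (Lemma \ref{half plane dense}). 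Combined with Lemma \ref{greedy}, which sends $\x$ into $(0,1-\lambda]^d$, one then builds the universal coding by a telescoping approximation: scale by $\lambda^{-l}$, subtract the contribution of the next target block $\mathbf B_q$, approximate the remainder by a point of $Z^d(\lambda)$, and keep the error in a box that guarantees the residue stays in $(0,1-\lambda]^d$. Because the construction is a single algebraic rearrangement of a series, the ``stay in $X$'' constraint is automatic and no dynamical navigation between lines is needed. Replacing your lifting scheme with this partition-of-positions and Minkowski-sum device is what makes the $d$-dimensional argument go through.
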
 Applying a change of coordinates as in the proof of Lemma \ref{no holes lemma}, we see that to prove Proposition \ref{d+1 prop} it suffices to consider the case where $\p_0$ is the $\mathbf{0}$ vector in $\mathbb{R}^d$, and each $\p_i$ is the $i$-th unit vector in the standard basis of $\mathbb{R}^d$. To emphasise when we are dealing with these vectors we denote them by $\mathbf{e}_0,\ldots, \e_d$. The following lemma is the first step towards proving Proposition \ref{d+1 prop}.

\begin{lemma}
	\label{greedy}
	Let $\mathbf{e}_0,\ldots, \e_d$ be the fixed points of $\Phi$ and  $\lambda\in [\frac{d}{d+1},1)$. If $\x\in int(X)$, then there exists $\a\in\D^{*}$ such that $T_{\a}(\x)\in (0,(1-\lambda)]^d.$
\end{lemma}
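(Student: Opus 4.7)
The plan is to construct $\a$ by a greedy algorithm that drives the iterates $\x_k := T_{a_k}\circ\cdots\circ T_{a_1}(\x)$ into the cube $(0,1-\lambda]^d$. First, as in the proof of Lemma \ref{no holes lemma}, I change coordinates so that $\e_0 = \mathbf{0}$ and $\e_i$ is the $i$-th standard basis vector for $1 \leq i \leq d$. The hypothesis $\lambda \geq d/(d+1)$ gives $(d+1)\lambda \geq d$, so Lemma \ref{no holes lemma} yields
\[
X = \Delta := \Big\{\x \in \mathbb{R}^d : x_j \geq 0 \text{ for all }j,\ \textstyle\sum_{j=1}^d x_j \leq 1\Big\},
\]
and in these coordinates $T_0(\x) = \lambda^{-1}\x$ while $T_i(\x) = \lambda^{-1}(\x-(1-\lambda)\e_i)$ for $1 \leq i \leq d$. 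Put $\x_0 := \x$. The greedy rule is: for each $k \geq 1$, if $(x_{k-1})_j \leq 1-\lambda$ for every $j \in \{1,\dots,d\}$ then halt and set $m := k-1$; otherwise pick $a_k \in \{1,\dots,d\}$ achieving $\max_j (x_{k-1})_j$ and set $\x_k := T_{a_k}(\x_{k-1})$.

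The first thing to verify is that, inductively, each iterate stays in $int(X)$. Writing $s_k := \sum_{j=1}^d (x_k)_j$, suppose $\x_{k-1} \in int(X)$, so all its coordinates are positive and $s_{k-1}<1$. Since the greedy step is taken precisely when some coordinate of $\x_{k-1}$ exceeds $1-\lambda$, and since the sum of non-negative coordinates dominates each single coordinate, we have $s_{k-1} > 1-\lambda$; hence $s_k = (s_{k-1}-(1-\lambda))/\lambda$ lies in $(0,1)$. The updated coordinate $(x_k)_{a_k} = ((x_{k-1})_{a_k}-(1-\lambda))/\lambda$ is positive by the choice of $a_k$, and the remaining coordinates are merely rescaled by $1/\lambda$ and so remain positive, confirming $\x_k \in int(X)$.

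For termination, iterating the linear recursion on the sum yields $s_k = 1 - (1-s_0)\lambda^{-k}$, which tends to $-\infty$ because $\lambda \in (0,1)$ and $s_0 < 1$. Consequently there is a smallest index $m$ with $s_m \leq 1-\lambda$, and at that step the non-negativity of the coordinates forces $(x_m)_j \leq s_m \leq 1-\lambda$ for every $j$, triggering the stopping condition. Setting $\a := a_1\cdots a_m \in \D^*$ then gives $T_\a(\x) = \x_m \in (0,1-\lambda]^d$, as required.

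The one subtle point is reconciling the stopping condition with keeping the iterates inside $X$: the sum $s_k$ must cross the threshold $1-\lambda$ strictly before it crosses $0$. This is the main obstacle to overcome in the write-up, but it is ultimately automatic, since $1-\lambda > 0$ and $s_k - s_{k-1} = (1-\lambda)(s_{k-1}-1)/\lambda < 0$ shows $s_k$ is strictly decreasing, so it must enter $(0,1-\lambda]$ before it could enter $(-\infty,0)$.
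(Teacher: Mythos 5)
Your proof is correct. The approach is in the same greedy spirit as the paper's argument (both drive $\x$ toward the corner by applying non-zero maps $T_i$ whenever some coordinate exceeds $1-\lambda$, and both establish that such maps cannot be applied forever), but your termination argument is different and more self-contained. The paper's proof sets up a general quasi-greedy algorithm that also prescribes what to do on the boundary and when $\x$ already lies in the target cube (including a verification, using $\lambda\ge d/(d+1)$, that applying $T_0$ from the cube keeps one in $X$ -- a step not actually needed for the lemma itself), and then rules out infinitely many non-zero applications by invoking Lemma \ref{bijection lemma} together with the algebraic fact that a coding containing no zeros forces $\sum_i x_i = 1$, placing $\x$ on the boundary. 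You instead make a concrete choice (always cancel the largest coordinate), track the coordinate sum $s_k$ explicitly, and observe from $s_k = 1 - (1-s_0)\lambda^{-k}$ that the sum would eventually drop below zero if the algorithm never halted, contradicting positivity of the iterates -- a direct analytic contradiction that avoids Lemma \ref{bijection lemma} entirely and even gives an explicit bound on the number of steps. This is a cleaner route to the same conclusion. The only expository wrinkle is the clause ``there is a smallest index $m$ with $s_m\le 1-\lambda$'': the algorithm may actually halt earlier than that $m$ (if all coordinates dip below $1-\lambda$ while the sum is still above it), but this does not affect correctness -- the point is simply that the algorithm must halt at some finite step, and at that step all coordinates are in $(0,1-\lambda]$ by the halting condition together with the inductive positivity, so it would be cleaner to phrase termination purely as ``the algorithm cannot run forever.''
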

\begin{proof}
We start by remarking that by Lemma \ref{no holes lemma} we know that 
$$X=\Big\{\x\in\mathbb{R}^d: x_i\geq 0\, , \sum_{i=1}^d x_i\leq 1\Big\}.$$ To prove our lemma we devise an algorithm for constructing a coding. This algorithm is similar in spirit to the quasi-greedy algorithm from expansions in non-integer bases (see \cite{Ko}). 

We construct a coding in keeping with the following rules. Fix $\x\in X$.

\begin{enumerate}
	\item If $\x\in int(X)$ and there exists $i\neq 0$ such that $T_i(\x)\in int(X),$ apply one of these $T_i$. 
	\item If $\x\in int(X)$ and there exists no $i\neq 0$ such that $T_i(\x)\in int(X)$ apply $T_0$.
	\item If $\x\in \partial X$ choose $T_i$ arbitrarily so that $T_i(\x)\in X$. 
\end{enumerate} To check that repeatedly applying these rules yields an element of $\Omega_{\Phi}(\x),$ we have to check that for any $\x\in X$ our rules yield a map $T_i$ such that $T_i(\x)\in X$. For the first and third rule this is obviously true. It remains to check the second rule. If $\x\in int(X)$ is such that there exists no $i\neq 0$ such that $T_i(\x)\in int(X),$ then it can be shown that $\x\in (0,(1-\lambda)]^d$. Applying $T_0$ we obtain $T_0((0,(1-\lambda)]^d)=(0,(1-\lambda)/\lambda]^d.$ To see that $(0,(1-\lambda)/\lambda]^d \subseteq X$ it suffices to check $d\cdot \frac{1-\lambda}{\lambda}\leq 1$. However this follows from our assumption $\lambda\in[\frac{d}{d+1},1)$. Therefore the second rule yields a map satisfying $T_i(\x)\in X$, and our algorithm yields an element of $\Omega_{\Phi}(\x)$ for each $\x\in X$.

We remark here that our algorithm has the property that if we apply a map determined by rule $1$, then it has to be followed by a map determined by either rule $1$ or rule $2$. We also remark that we only apply rule $2$ when $\x\in  (0,(1-\lambda)]^d.$ 

Now we apply our algorithm to construct our desired sequence $\a\in \D^{*}$. If $\x\in (0,(1-\lambda)]^d$ then there is nothing to prove. Let us assume $\x\in int(X)\setminus (0,(1-\lambda)]^d.$ By our above remark we see that it suffices to show that we eventually apply a map corresponding to rule $2$, since the previous maps determined by our rules must have mapped $\x$ into $(0,(1-\lambda)]^d.$ Since $ \x\in int(X)\setminus (0,(1-\lambda)]^d$ we must first of all apply a map corresponding to rule $1$. Since a rule $1$ map must be followed by either a rule $1$ map or a rule $2$ map, it suffices to show that we cannot apply the maps generated by rule $1$ indefinitely. By construction a map corresponding to rule $1$ cannot equal $T_0$. Therefore if we were able to apply rule $1$ indefinitely, Lemma \ref{bijection lemma} would imply that $\x$ has a coding containing no zeros. It can be shown that any such $\x=(x_1,\ldots,x_d)$ must satisfy $\sum_{i=1}^d x_i=1,$ and therefore must be contained in the boundary of $X$. This contradicts our assumption $\x\in int(X)$. Therefore we must eventually apply a rule $2$ map and $\x$ must eventually be mapped into $(0,(1-\lambda)]^d.$
\end{proof}

For our purposes we need the following analogue of $Z_{n}(q)$:
$$Z^{d}(\lambda):=\Big\{(1-\lambda)\sum_{j=1}^m \e_{a_j}\lambda^{-j}:m\in\mathbb{N},\, (a_j)\in\set{0,1,\ldots,d}^{\mathbb{N}}\Big\}.$$ 
\begin{lemma}
	\label{half plane dense}
Suppose $\lambda\in(2^{-1/2d},1)$ and $\lambda^{-2d}$ is not a Pisot number. Then for any $\epsilon>0$ there exists $C>0$ such that $Z^d(\lambda)$ is $\epsilon$-dense in $\cap_{i=1}^d\{\x:x_i\geq C\}.$
\end{lemma}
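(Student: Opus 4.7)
The plan is to reduce the $d$-dimensional density question for $Z^d(\lambda)$ to $d$ independent one-dimensional questions about $Z_1(\lambda^{-d})$, and then apply Feng's theorem (Theorem \ref{Feng thm}) in each of them.

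First, set $q := \lambda^{-d}$. The hypotheses $\lambda \in (2^{-1/2d},1)$ and $\lambda^{-2d}$ not Pisot translate to $q \in (1,\sqrt{2})$ and $q^2$ not Pisot, so Theorem \ref{Feng thm} (with $n=1$) applies and gives $L_1(q)=0$. Consequently, for any $\eta > 0$ there is an $N_0 \in \mathbb{N}$ with $y_{l+1,1}(q)-y_{l,1}(q) < \eta$ for every $l \geq N_0$; since $\sup Z_1(q) = \infty$, writing $C' := y_{N_0,1}(q)$, the set $Z_1(q)$ is $\eta$-dense in $[C', \infty)$.

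Next, I decouple the coordinates using residue classes modulo $d$. Partition $\mathbb{N}$ into classes $A_1,\ldots,A_d$ with $A_i := \{dm+i : m \geq 0\}$ for $1 \leq i \leq d-1$ and $A_d := \{dm : m \geq 1\}$. I restrict to sequences $(a_j)$ satisfying $a_j \in \{0,i\}$ whenever $j \in A_i$. Under this restriction the $i$-th coordinate of $(1-\lambda)\sum_{j=1}^{M}\e_{a_j}\lambda^{-j}$ decouples from the others, and one computes
\begin{equation*}
x_i \;=\; (1-\lambda)\lambda^{-i}\sum_{m\geq 0} b_{m,i}\, q^m \qquad (1 \leq i \leq d-1),
\end{equation*}
where $b_{m,i} \in \{0,1\}$ records whether $a_{dm+i}=i$ or $a_{dm+i}=0$; an analogous formula (with an extra factor of $q$) holds for $i=d$. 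Thus, as the digits in class $A_i$ vary over finite sequences in $\{0,1\}^*$, the value of $x_i$ ranges over $(1-\lambda)\lambda^{-i}\cdot Z_1(q)$ (up to an obvious rescaling when $i=d$).

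Given a target $\y = (y_1,\ldots,y_d)$ with each $y_i \geq C$, I apply the one-dimensional $\eta$-density statement separately in each coordinate to produce the digit sequences $(b_{m,i})_m$ that realise $x_i$ within $(1-\lambda)\lambda^{-i}\eta$ of $y_i$. Choosing $\eta$ so that $\max_{1\leq i\leq d}(1-\lambda)\lambda^{-i}\eta < \epsilon$, and setting $C := \max_{1\leq i\leq d}(1-\lambda)\lambda^{-i} C'$ (with the obvious adjustment for $i=d$), guarantees that the hypothesis $y_i \geq C$ places the rescaled targets above the threshold $C'$ where the one-dimensional density is valid, and that the resulting errors are dominated by $\epsilon$. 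Assembling all digit sequences into a single finite word $(a_j)_{j=1}^{M}$ (padding with zeros so that every class uses the same length) produces an element of $Z^d(\lambda)$ within $\epsilon$ of $\y$, which yields the desired $\epsilon$-density on $\bigcap_{i=1}^{d}\{\x : x_i \geq C\}$.

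The substantive ingredient is Feng's theorem; once $L_1(q)=0$ is in hand, the residue-class decoupling is essentially bookkeeping. The only delicate point is calibrating the constants so that (i) the scaled lower bounds $(1-\lambda)\lambda^{-i}C'$ are all dominated by the single constant $C$, and (ii) the one-dimensional tolerance $\eta$ is chosen small enough that, after multiplication by the worst case factor $(1-\lambda)\lambda^{-i}$, the coordinate-wise error stays below $\epsilon$ in whichever metric on $\mathbb{R}^d$ one prefers.
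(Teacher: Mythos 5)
Your proof is correct and takes essentially the same route as the paper's: both decouple coordinates by restricting the digit positions to residue classes modulo $d$, reduce each coordinate to the one-dimensional set $Z_1(\lambda^{-d})$, and invoke Theorem \ref{Feng thm} to get $L_1(\lambda^{-d})=0$ and hence $\epsilon$-density on a half-line. The paper packages the same idea slightly differently by defining the sets $Z^d(i,\lambda)$ and observing the inclusion $Z^d(1,\lambda)+\cdots+Z^d(d,\lambda)\subseteq Z^d(\lambda)$, but the underlying decomposition, the appeal to Feng's theorem, and the constant calibration are the same as yours.
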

\begin{proof}
Fix $\lambda\in(2^{-1/2d},1)$ such that $\lambda^{-2d}$ is not a Pisot number. For each $1\leq i \leq d$ let $$Z^d(i,\lambda):=\Big\{(1-\lambda)\sum_{\substack{1\leq j \leq m\\ j=i \mod d}} a_j\e_i\lambda^{-j}:m\in\mathbb{N},\, a_j\in\{0,1\}\Big\}.$$ Since the elements of $Z^d(i,\lambda)$ consist of sums of scaled copies of a single $\e_i$, the set $Z^d(i,\la)$ is a subset of the axis spanned by $\e_i$. Note that we have the inclusion 
\begin{equation}
\label{sum inclusion}
Z^d(1,\lambda)+Z^d(2,\lambda)+\cdots + Z^d(d,\lambda)\subseteq Z^d(\lambda).
\end{equation} Applying Theorem \ref{Feng thm} we know that for any $\epsilon>0$ there exists $C_1>0$ such that $\{\sum_{j=0}^m a_j \lambda^{-dj}:m\in \mathbb{N}, a_j\in\{0,1\}\}$ is $\epsilon$-dense in $[C_1,\infty).$ Importantly each $Z^d(i,\lambda)$ is simply a copy of $\{\sum_{j=0}^m a_j \lambda^{-dj}:m\in \mathbb{N}, a_j\in\{0,1\}\}$ that has been scaled by a power of $\lambda$ and then rotated to align with the $i$-axis. Therefore we may conclude that for any $\epsilon>0$ there exists $C>0$ such that $Z^d(i,\lambda)$ is $\epsilon$-dense in $\{\x:x_i\geq C,\, x_k=0 \textrm{ for } k\neq i\}$ for any $1\leq i\leq d$. Our result now follows from \eqref{sum inclusion}.
\end{proof}

Before moving on to our proof of Proposition \ref{d+1 prop} we make a simple observation. By Lemma \ref{no holes lemma 2} for $\lambda$ sufficiently close to $1$ we have $X=\textrm{conv}(F)$. Therefore if $\x$ is contained in the boundary of $X$ for $\lambda$ sufficiently close to $1,$ it must be contained in a bounding hyperplane of $\textrm{conv}(F)$ of dimension $d-1$. Call this hyperplane $V$. Since $F$ is not contained in any $(d-1)$-dimensional affine subspace, there must exist $\p_i\in F$ such that $\p_i\notin V.$ One can then show by a simple argument that since $\x\in V$ it cannot have a coding containing the digit $i$. As such we automatically have the inclusions 
\[X_k\subseteq int(X)\quad \textrm{and}\quad X_{uni}\subseteq int(X).\]
 Therefore to prove Proposition \ref{d+1 prop}, and later Theorems \ref{k normal theorem} and \ref{universal theorem}, it will be sufficient to show that the opposite inclusions holds for $\lambda$ sufficiently close to $1$. Equipped with this observation and the lemmas above we are now in a position to prove Proposition \ref{d+1 prop}.

\begin{proof}[Proof of Proposition \ref{d+1 prop}]
As previously remarked upon, by a change of coordinates we may assume without loss of generality that our fixed points are $\e_0,\ldots,\e_d$. Let us now fix $\lambda$ satisfying the hypothesis of our proposition. It can be shown that $2^{-1/2d}> \frac{d}{d+1}$ for all $d\geq 1,$ therefore by Lemma \ref{no holes lemma} we know that $X=\textrm{conv}(\{\e_0,\ldots,\e_d\})$. By the above remark it now suffices to show that $int(X)\subseteq X_{uni}$.

Since $2^{-1/2d}> \frac{d}{d+1}$ for all $d\geq 1$,  we can apply Lemma \ref{greedy}. As such for any $\x\in int(X)$ there exists $\a$ such that $T_{\a}(\x)\in (0,1-\lambda]^d$. Therefore, we see by Lemma \ref{bijection lemma} that there is no loss of generality in assuming to begin with that $\x\in (0,1-\lambda]^d.$ Let us now fix $\x\in (0,1-\lambda]^d$ and let $\mathbf{B}_1,\mathbf{B}_2,\ldots$ be an enumeration of all the elements of $\D^{*}=\set{0,1,\ldots,d}^*.$ 

Suppose $\mathbf{B}_1=b_1\ldots b_{k}$. Consider the vector $$\x\cdot \lambda^{-l}-(1-\lambda)\sum_{j=1}^{k}\e_{b_j}\lambda^{j-1}.$$ Since $\x\in (0,1-\lambda]^d$, we have that for any $C>0$ this vector is contained in $\cap_{i=1}^d\{\x:x_i\geq C\}$ for $l$ sufficiently large. Applying Lemma \ref{half plane dense} for an appropriate choice of $\epsilon,$ we see that for $l$ sufficiently large there exists $c_1\cdots c_p\in \D^*$ such that $c_p\neq 0$ and
\begin{equation}
\label{step1}
\x\cdot \lambda^{-l}-(1-\lambda)\sum_{j=1}^{k}\e_{b_j}\lambda^{j-1}\in (1-\lambda)\sum_{j=1}^{p}\e_{c_j}\lambda^{-j} + (0,(1-\lambda)\lambda^{k}]^d.
\end{equation}  Rewriting \eqref{step1} we obtain 
\begin{equation}
\label{step2}\x\in  (1-\lambda)\sum_{j=1}^{p}\e_{c_j}\lambda^{-j+l}+(1-\lambda)\sum_{j=1}^{k}\e_{b_j}\lambda^{j+l-1}+(0,(1-\lambda)\lambda^{k+l}]^d.
\end{equation}
The two summations appearing in \eqref{step2} share no common powers of $\lambda.$ What is more, since $\x\in (0,1-\lambda]^d,$ none of the coordinates of $\x \cdot \lambda^{-l}$ can exceed $(1-\lambda)\cdot \lambda^{-l}$. This implies that $p<l$.
Combining these two facts with \eqref{step2} we see that there exists $m_0=k+l$ and a word $\a_0=a_{1,0}\ldots a_{m_0,0}$ such that $\a_0$ contains $\mathbf{B}_1$ as a subword and   
$$\x\in(1-\lambda)\sum_{j=1}^{m_0}\e_{a_{j,0}}\lambda^{j-1}+(0,(1-\lambda)\lambda^{m_0}]^d.$$ Let $\x_1$ be such that $\x_1\in (0,1-\lambda]^d$ and 
\begin{equation}
\label{sub1}
\x=(1-\lambda)\sum_{j=1}^{m_0}\e_{a_{j,0}}\lambda^{j-1}+\x_1\cdot \lambda^{m_0}.
\end{equation}Replacing $\x$ with $\x_1$ and $\mathbf{B}_1$ with $\mathbf{B}_2$ we can repeat the argument above to show that there exists a word $\mathbf{d}_1\in \D^{*}$ such that $\mathbf{d}_1$ contains $\mathbf{B}_2$ as a subword and  \begin{equation}
\label{sub2}
\x_1\in(1-\lambda)\sum_{j=1}^{|\mathbf{d}_1|}\e_{d_j}\lambda^{j-1}+(0,(1-\lambda)\lambda^{|\mathbf{d}_1|}]^d.
\end{equation}
Let $\a_1:=\a_0\mathbf{d_1}=a_{1,1}\ldots a_{m_1,1}$ with $m_1=m_0+|\mathbf{d}_1|$. Then  $\a_1$ contains $\mathbf{B}_1$ and $\mathbf{B}_2$ as subwords. Substituting \eqref{sub2} into \eqref{sub1} we obtain 
$$\x\in(1-\lambda)\sum_{j=1}^{m_1}\e_{a_{j,1}}\lambda^{j-1}+(0,(1-\lambda)\lambda^{m_1}]^d.$$ 
 We can repeat this step indefinitely and show that for any $q\in\mathbb{N}$ there exists a sequence $\a_q=a_{1,q}\ldots a_{m_q,q}$ containing $\mathbf{B}_1,\ldots,\mathbf{B}_{q+1}$ as subwords and satisfying 
\begin{equation}
\label{laststep}
\x\in(1-\lambda)\sum_{j=1}^{m_q}\e_{a_{j,q}}\lambda^{j-1}+(0,(1-\lambda)\lambda^{m_q}]^d. 
\end{equation}It follows from our construction that for any $q_1<q_2$ the word $\a_{q_1}$ is a prefix of $\a_{q_2}.$ It follows that the infinite sequence $\a_{\infty}$ obtained as the component-wise limit of the $\a_q$ is well defined. Moreover $\a_{\infty}$ contains all finite blocks and by \eqref{laststep} satisfies $$\x=(1-\lambda)\sum_{j=1}^{\infty}\e_{a_{j,\infty}}\lambda^{j-1}.$$ Appealing to the formulation of a coding provided by \eqref{sum} we see that $\a_{\infty}$ satisfies the desired properties. 
\end{proof}
In the proof of Proposition \ref{d+1 prop} we've made no effort to optimise the quantities appearing in its statement. It is likely that one can improve upon these estimates.

\section{Proofs of Theorems \ref{k normal theorem} and \ref{universal theorem}}
\label{proofs}
In this section we prove Theorems \ref{k normal theorem} and \ref{universal theorem}. For our proofs it is useful to have the following lemma.

\begin{lemma}
	\label{dense}
If $\a\in\D^{\mathbb{N}}$ is a universal coding for $\x,$ then $\{T_{a_1\ldots a_j}(\x):j\geq 1\}$ is dense in $X.$ 
\end{lemma}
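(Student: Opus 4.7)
The plan is to use the dynamical interpretation of codings provided by Lemma \ref{bijection lemma}, together with the universality of $\a$, to approximate every point in $X$ by an element of the orbit $\set{T_{a_1\ldots a_j}(\x):j\geq 1}$.

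First I would observe that since $\pi(\a)=\x$, the relation $\x=S_{a_1}(\pi(\si(\a)))$ rearranges to $T_{a_1}(\x)=\pi(\si(\a))$, where $\si$ denotes the shift on $\D^{\mathbb N}$. Iterating this identity yields
$$T_{a_1\ldots a_j}(\x)=\pi(\si^j(\a))=\pi(a_{j+1}a_{j+2}\ldots)\quad\text{for every }j\geq 1.$$
Thus the orbit of $\x$ under the maps $T_{a_1\ldots a_j}$ is precisely the set of $\pi$-images of the shifts of $\a$.

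Next I would fix $\y\in X$ and $\ep>0$, and choose a coding $\b\in\D^{\mathbb N}$ of $\y$, which exists by surjectivity of $\pi$. Select $m$ large enough that $\la^m\, Diam(X)<\ep$. Universality of $\a$ guarantees that the prefix $b_1\ldots b_m$ occurs in $\a$, and in fact occurs infinitely often: writing $\mathbf c:=b_1\ldots b_m$, universality forces each concatenation $\mathbf c^N$ to appear somewhere in $\a$, so $\mathbf c$ appears at least $N$ times for every $N$. Pick any $j\geq 1$ with $a_{j+1}\ldots a_{j+m}=b_1\ldots b_m$.

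Both $T_{a_1\ldots a_j}(\x)=\pi(\si^j(\a))$ and $\y=\pi(\b)$ then lie inside $S_{b_1\ldots b_m}(X)$, a set of diameter $\la^m\, Diam(X)<\ep$, whence $|T_{a_1\ldots a_j}(\x)-\y|<\ep$. Since $\y$ and $\ep$ were arbitrary, density of the orbit in $X$ follows. There is no serious obstacle to this argument; the only point worth flagging is the upgrade from "each word appears at least once" to "each word appears infinitely often", which is what permits the requirement $j\geq 1$ rather than merely $j\geq 0$.
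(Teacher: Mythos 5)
Your proof is correct. The paper omits the proof of this lemma as ``straightforward,'' and your argument---the identity $T_{a_1\ldots a_j}(\x)=\pi(\sigma^j(\a))$ combined with choosing a prefix $b_1\ldots b_m$ of a coding of $\y$ long enough that $\lambda^m\,Diam(X)<\ep$, then locating that prefix inside $\a$---is surely the intended one; the upgrade from ``each word appears'' to ``each word appears infinitely often'' (via the occurrence of $\c^N$ for all $N$) is a valid and worthwhile point to flag, since it guarantees an occurrence with $j\geq 1$ as required.
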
 The proof of Lemma \ref{dense} is straightforward and therefore omitted.

\begin{proposition}
	\label{universal prop}
	Suppose $\Phi$ is homogeneous,  $\lambda\in(2^{-1/2d},1)$ and $\lambda^{-2d}$ is not a Pisot number. If $\x\in int(\textrm{conv}(B))$ for some $B\subseteq F$ consisting of $d+1$ fixed points which are not contained in any $(d-1)$-dimensional affine subspace, then $\x\in X_{uni}$.
\end{proposition}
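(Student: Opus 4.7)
My plan is to deduce Proposition \ref{universal prop} by adapting the proof of Proposition \ref{d+1 prop} almost verbatim, working inside the sub-simplex $\textrm{conv}(B)$ but allowing the target digits to range over the full alphabet $\D$.

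I would first change coordinates exactly as in the proof of Proposition \ref{d+1 prop}: since the $d+1$ points in $B$ are affinely independent, I may assume $B = \set{\e_0, \e_1, \ldots, \e_d}$ where $\e_0 = \mathbf{0}$ and $\e_i$ is the $i$-th standard basis vector, so that $\textrm{conv}(B) = \set{\x : x_i \ge 0, \, \sum x_i \le 1}$; the remaining fixed points $\set{\p_i}_{i \in \D \setminus B}$ are now arbitrary vectors in $\R^d$. Our hypothesis $\la \in (2^{-1/2d}, 1)$ is comfortably greater than $d/(d+1)$, so by Lemma \ref{no holes lemma 2} we have $X \supseteq \textrm{conv}(B)$, and Lemma \ref{greedy} applied to the sub-IFS $\Phi_B = \set{S_i : \p_i \in B}$ together with Lemma \ref{bijection lemma} allows me to reduce to the case $\x \in (0, 1-\la]^d$.

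With this setup in place, I would enumerate $\D^* = \set{\mathbf{B}_1, \mathbf{B}_2, \ldots}$ over the full alphabet and run the iterative construction of Proposition \ref{d+1 prop} with one modification: the target digits $b_1, \ldots, b_k$ of $\mathbf{B}_k$ are now allowed to lie in $\D$ rather than only in $B$. For $l$ sufficiently large the vector
\[
\x \la^{-l} - (1-\la) \sum_{j=1}^k \p_{b_j} \la^{j-1}
\]
lies in $\bigcap_{i=1}^d \set{\y : y_i \ge C}$ for any prescribed $C$, since the subtracted term is uniformly bounded in $l$ while $\x\la^{-l}$ grows without bound. Lemma \ref{half plane dense} then supplies filling digits $c_1, \ldots, c_p \in \set{0, 1, \ldots, d}$ with $c_p \neq 0$ placing the above vector in $(1-\la) \sum_{j=1}^p \e_{c_j} \la^{-j} + (0, (1-\la)\la^k]^d$. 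The key point is that the filling digits are necessarily drawn from $B$ (Lemma \ref{half plane dense} is a statement about $Z^d(\la)$, which is built only from $\set{\e_0, \ldots, \e_d}$), whereas the target digits $b_j$ may lie anywhere in $\D$. Rearranging exactly as in the proof of Proposition \ref{d+1 prop} yields a finite word $\alpha_0 = 0^{l-p} c_p c_{p-1} \cdots c_1 \, b_1 \cdots b_k$ containing $\mathbf{B}_k$ as a subword and satisfying $T_{\alpha_0}(\x) \in (0, 1-\la]^d$. Iterating with $T_{\alpha_0}(\x)$ in place of $\x$ and the next enumerated word in place of $\mathbf{B}_k$ produces an ascending chain of coding prefixes, and their componentwise limit is the desired universal $\Phi$-coding of $\x$.

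The main obstacle I expect is the bookkeeping when the target digits correspond to fixed points $\p_{b_j}$ outside $B$: such $\p_{b_j}$ need not lie in the positive orthant in our new basis, so one must check that this does not spoil the estimate used to conclude $p \le l$ (which places the leading block of zeros in $\alpha_0$) or the claim that the residual lies in $(0, (1-\la)\la^{l+k}]^d$. Because these fixed points enter only through the uniformly bounded correction $(1-\la) \sum_{j=1}^k \p_{b_j} \la^{j-1}$, the inequality $p \le l$ survives up to an additive constant controlled by $\max_{i\in\D} \|\p_i\|$, which is absorbed by choosing $l$ large enough at each stage. The positivity and upper bound on the residual come directly from the $(0, (1-\la)\la^k]^d$-statement produced by Lemma \ref{half plane dense}, irrespective of the signs of the coordinates of $\p_{b_j}$. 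Finally, since $(0, 1-\la]^d \subseteq \textrm{conv}(B) \subseteq X$, every intermediate state of the construction lies in $X$, so by Lemma \ref{bijection lemma} the concatenated sequence is indeed a coding of $\x$, and $\x \in X_{uni}$.
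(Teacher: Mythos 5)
Your proof is correct, but it takes a genuinely different route from the paper's.

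The paper keeps Proposition~\ref{d+1 prop} as a sealed black box for the restricted alphabet $\{i:\p_i\in B\}$, and instead bolts on Lemma~\ref{dense}: it fixes an ``anchor'' word $\mathbf{A}\in\D^*$ with $S_{\mathbf{A}}(X)\subseteq int(\textrm{conv}(B))$, and then alternates between two steps --- (i) use the density of the $B$-coding expanding orbit (Lemma~\ref{dense} applied to a universal $B$-coding of the current point) to land inside $S_{\mathbf{A}\mathbf{B}_j\mathbf{A}}(X)$, and (ii) apply $T_{\mathbf{A}\mathbf{B}_j}$ to emit the target block $\mathbf{B}_j$ over the full alphabet and return to $S_{\mathbf{A}}(X)\subseteq int(\textrm{conv}(B))$, so the process can be restarted. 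Your approach instead reopens the internal machinery of Proposition~\ref{d+1 prop}: you observe that in the inductive construction the ``target'' digits $b_1,\ldots,b_k$ enter only through the uniformly bounded subtraction $(1-\la)\sum_j\p_{b_j}\la^{j-1}$, while the ``filling'' digits come from Lemma~\ref{half plane dense} and are therefore automatically in $B$. This lets you run the same Erd\H{o}s--Komornik style iteration with $\mathbf{B}_j\in\D^*$ arbitrary, at the cost of checking that the side constant $\max_{i\in\D}\|\p_i\|$ can be absorbed by choosing $l$ large enough to preserve the bound $p\le l$, which you correctly flag and which does hold (if $p\ge l+1$, then $\la^{-p}-\la^{-l}\ge\la^{-l}(1-\la)/\la\to\infty$, overwhelming the fixed additive error). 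Both proofs ultimately lean on Lemma~\ref{half plane dense}. The paper's argument is more modular --- the approximation-by-$Z^d(\la)$ technology stays encapsulated in Proposition~\ref{d+1 prop} --- and the anchor-word trick is reused later in the proof of Theorem~\ref{universal theorem}; your argument is more direct and dispenses with Lemma~\ref{dense} entirely, at the price of re-deriving and slightly generalising the internal estimates. One small caveat: when justifying that the limiting sequence is a coding, the clean route is to appeal directly to the algebraic identity~\eqref{sum} for the homogeneous $\pi$ (as the paper does at the end of the proof of Proposition~\ref{d+1 prop}), rather than to Lemma~\ref{bijection lemma} together with the containment $(0,1-\la]^d\subseteq\textrm{conv}(B)\subseteq X$; the latter statement is true but it does not by itself show that all intermediate images $T_{a_1\cdots a_j}(\x)$ lie in $X$, whereas the algebraic identity sidesteps the question altogether.
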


\begin{proof}
Let us start by fixing $d+1$ fixed points $B$ that are not contained in any $(d-1)$-dimensional affine subspace. Let $\mathbf{B}_1,\mathbf{B}_2,\ldots$ be an enumeration of the elements of $\D^*.$ We emphasise here that $\D$ is a potentially larger digit set than $\{i:\p_i\in B\}$. We now also fix $\textbf{A}\in \D^*$ such that $S_{\mathbf{A}}(X)\subseteq int(\textrm{conv}(B))$. It is useful to remark at this point that for any $\a\in \D^*$ the set $S_{\a}(X)$ has non-empty interior and  $$S_{\a}(X)=\{\x\in X:T_{\a}(\x)\in X\}.$$
 Let us now fix $\x\in int(\textrm{conv}(B))$. By Proposition \ref{d+1 prop} we know that $\x$ has a universal coding for the restricted digit set $\{i:\p_i\in B\}.$ Consider the set $S_{\mathbf{A}\mathbf{B}_1\mathbf{A}}(X)$. Since $S_{\mathbf{A}}(X)\subseteq int(\textrm{conv}(B))$, we also have $S_{\mathbf{A}\mathbf{B}_1\mathbf{A}}(X)\subseteq int(\textrm{conv}(B))$. Therefore, by Lemma \ref{dense}, there exists $\a\in \cup_{j=0}^{\infty}\{i:\p_i\in B\}^j$ such that $T_{\a}(\x)\in S_{\mathbf{A}\mathbf{B}_1\mathbf{A}}(X)$. Therefore $T_{\a \mathbf{A}\mathbf{B}_1}(\x)\in S_{\mathbf{A}}(X).$ By construction $S_{\mathbf{A}}(X)\subseteq int(\textrm{conv}(B)),$ therefore 
 \[T_{\a \mathbf{A}\mathbf{B}_1}(\x)\in int(\textrm{conv}(B)).\]
Note by Proposition \ref{d+1 prop} that $T_{\a \mathbf{A}\mathbf{B}_1}(\x)$ has a universal coding for the digit set $\{i:\p_i\in B\}$. As such there exists $\a_1\in \cup_{j=0}^{\infty}\{i:\p_i\in B\}^j$ such that $T_{\a \mathbf{A}\mathbf{B}_1\a_1}(\x)\in S_{\mathbf{A}\mathbf{B}_2\mathbf{A}}(X).$ Which by the above implies 
\[T_{\a \mathbf{A}\mathbf{B}_1\a_1 \mathbf{A}\mathbf{B}_2}(\x)\in int(\textrm{conv}(B)).\]
Therefore by Proposition \ref{d+1 prop} $T_{\a \mathbf{A}\mathbf{B}_1\a_1 \mathbf{A}\mathbf{B}_2}(\x)$ has a universal coding for the digit set $\{i:\p_i\in B\}$.
 
  Clearly one can repeat the above step indefinitely for successive $\mathbf{B}_k$'s. This yields an element of $\Omega_{\Phi}(\x)$ which contains every element of $\D^*$ as a subword. By Lemma \ref{bijection lemma} $\x$ has a universal coding for digit set $\D$. 
\end{proof}
We also require the following strengthening of Caratheodory's theorem. 

\begin{lemma}
\label{strong caratheodory}
Let $B\subset \mathbb{R}^d$ be a finite set of points not contained in any $(d-1)$-dimensional affine subspace. For any $\x\in \textrm{conv}(B),$ there exists $B'\subseteq B$ such that $B'$ consists of $d+1$ extremal points of $\textrm{conv}(B),$ $\x\in \textrm{conv}(B'),$ and $B'$ is not contained in any $(d-1)$-dimensional affine subspace.
\end{lemma}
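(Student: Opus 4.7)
The plan is to pass to the extreme points of $\textrm{conv}(B)$ and then exhibit $\x$ as a point of a non-degenerate $d$-simplex whose vertices are all vertices of the polytope. First, let $E\subseteq B$ denote the set of extreme points of $\textrm{conv}(B)$. Since every point of $B$ lies in $\textrm{conv}(E)$ (a standard consequence of the fact that $\textrm{conv}(B)$ is the convex hull of its vertices for a polytope generated by finitely many points), we have $\textrm{conv}(B)=\textrm{conv}(E)$. The hypothesis that $B$ is not contained in any $(d-1)$-dimensional affine subspace forces $\textrm{conv}(B)$ to have non-empty interior in $\mathbb{R}^d$, and hence the same holds for $\textrm{conv}(E)$, so $E$ itself cannot be contained in any $(d-1)$-dimensional affine subspace.

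The heart of the argument is the following observation: the $d$-dimensional polytope $\textrm{conv}(E)$ admits a triangulation into $d$-dimensional simplices whose vertices all belong to $E$. I would prove this by induction on $d$. In the inductive step one fixes a vertex $\mathbf{v}\in E$, applies the inductive hypothesis to each facet of $\textrm{conv}(E)$ that does not contain $\mathbf{v}$ (these are $(d-1)$-dimensional polytopes whose vertex sets are subsets of $E$), and then takes the cones from $\mathbf{v}$ over the resulting $(d-1)$-simplices. The union of these cones covers $\textrm{conv}(E)$ because every point lies on a ray from $\mathbf{v}$ which meets a facet not containing $\mathbf{v}$, and each cone is a genuine $d$-simplex because $\mathbf{v}$ does not lie in the affine hull of a facet that omits it.

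With such a triangulation in hand, given any $\x\in\textrm{conv}(B)=\textrm{conv}(E)$ we simply choose a $d$-simplex $\Delta$ of the triangulation that contains $\x$ and set $B'$ to be the vertex set of $\Delta$. Then $|B'|=d+1$, every element of $B'$ lies in $E$ so is an extreme point of $\textrm{conv}(B)$, the set $B'$ is affinely independent (as the vertex set of a non-degenerate $d$-simplex) and hence is not contained in any $(d-1)$-dimensional affine subspace, and $\x\in\Delta=\textrm{conv}(B')$, which is exactly what the lemma requires.

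The only step that is not completely routine is the existence of a triangulation with the stated properties, but this is a classical fact about convex polytopes; the cone-over-facets construction sketched above provides a self-contained argument that fits naturally with the inductive flavour already used elsewhere in the paper (compare Lemma \ref{no holes lemma 2}). If one prefers to avoid invoking triangulations explicitly, the same conclusion can be reached by a direct induction on $d$: apply the inductive hypothesis in the affine hull of a facet hit by a ray from a suitably chosen vertex through $\x$, and then adjoin that vertex to obtain the required $d+1$ affinely independent extreme points.
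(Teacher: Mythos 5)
Your proof is correct, but it takes a genuinely different route from the paper's. Both proofs begin identically: pass to the extreme points $B_{ext}$ via Krein--Milman and note that $\textrm{conv}(B_{ext})=\textrm{conv}(B)$ is full-dimensional. From there the paper applies Carath\'eodory to obtain $d+1$ extreme points $B_1$ with $\x\in\textrm{conv}(B_1)$; if $B_1$ is degenerate it descends into the affine hull of $B_1$ (dimension $\le d-1$), applies Carath\'eodory again, and so on until it reaches an affinely independent set $B_*$ with $\x\in\textrm{conv}(B_*)$ (possibly $\x$ itself). It then \emph{pads} $B_*$ with additional extreme points to bring the count back up to $d+1$ while keeping affine independence, which is possible precisely because $B_{ext}$ is full-dimensional. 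You instead invoke the existence of a vertex-only triangulation of the polytope $\textrm{conv}(B_{ext})$ into non-degenerate $d$-simplices (the cone-over-facets, or ``pulling,'' triangulation) and simply take the vertex set of a simplex containing $\x$. Your approach is conceptually cleaner and avoids the fiddly descent-then-pad bookkeeping, at the cost of relying on a global structural result (the triangulation lemma) that itself needs an inductive proof and some care around boundary points whose exit ray lands on a facet through the apex vertex. The paper's approach is more local and self-contained, using only Carath\'eodory and explicit patching. Both are of comparable length and both are valid; you should just make sure, if you write the triangulation argument out in full, to handle the case where $\x$ lies on a face containing the chosen apex by recursing into that face rather than coning.
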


\begin{proof}

Let $B_{ext}\subseteq B$ denote the set of extremal points of $\textrm{conv}(B).$ By the Krein-Milman theorem (see \cite{DS}) we have 
\begin{equation}
\label{extreme}
\textrm{conv}(B_{ext})=\textrm{conv}(B).
\end{equation}
Since $B$ is not contained in any $(d-1)$-dimensional affine subspace, we also have that $B_{ext}$ is not contained in any $(d-1)$-dimensional affine subspace. 

Let us recall here Caratheodory's theorem which states that if $B$ is a finite set of points in $\mathbb{R}^{d}$, then any point in $\textrm{conv}(B)$ can be expressed as the convex combination of $d+1$ points from $B$ (see \cite{Roc}). Combining Caratheodory's theorem applied to $B_{ext}$ with \eqref{extreme}, we see that for any $\x\in \textrm{conv}(B)$ there exists $B_1\subseteq B_{ext}$ such that $\# B_1=d+1$ and $\x\in \textrm{conv}(B_1)$. If the elements of $B_1$ are not contained in a $(d-1)$-dimensional affine subspace we are done.
 If not, then $\textrm{conv}(B_1)$ is contained in a $(d-1)$-dimensional affine subspace $V_1$ that is contained in $\mathbb{R}^d$. Identifying $V_1$ with $\mathbb{R}^{d-1}$ we can apply Caratheodory's theorem again to assert that there exists $B_2\subset B_1$ such that $\#B_2=d$ and $\x\in \textrm{conv}(B_2)$. If the elements of $B_2$ are not contained in a $(d-2)$-dimensional affine subspace of $V_1,$ then we pick $\p\in B_{ext}$ such that $\p\notin V_1$. In which case $B'=B_2\cup \{\p\}$ satisfies the desired properties. Such a $\p$ exists since $B_{ext}$ is not contained in a $(d-1)$-dimensional affine subspace. Suppose the alternative holds and $B_2$ is contained in a $(d-2)$-dimensional affine subspace of $V_1$ which we call $V_2$. Identifying $V_2$ with $\mathbb{R}^{d-2}$ and applying Caratheodory's theorem, we may assert that there exists $B_3\subset B_2$ such that $\#B_3=d-1$ and $\x\in\textrm{conv}(B_3)$.
 
Repeating the above steps we can conclude that eventually one of two outcomes occurs. Either there exists a set $B_*\subseteq B_{1}$ such that $\# B_*\geq 2,$ the elements of $B_*$ are not contained in a $(\# B_*-2)$-dimensional affine subspace and $\x\in \textrm{conv}(B_*),$ or alternatively $\x\in B_{ext}$. In the former case we may then choose $\p_1,\ldots, \p_{d+1-\# B_*}\in B_{ext}$ such that $B'=B_*\cup \{\p_1,\ldots, \p_{d+1-\#B_*}\}$ is not contained in a $(d-1)$-dimensional affine subspace. In the latter case we choose  $\p_1,\ldots, \p_{d}\in B_{ext}$ such that $B'=\set{\x}\cup \{\p_1,\ldots,\p_{d}\}$ is not contained in a $(d-1)$-dimensional affine subspace. The fact that these vectors exist follows because the elements of $B_{ext}$ are not contained in a $(d-1)$-dimensional affine subspace. In either case the constructed $B'$ has the desired properties.
\end{proof}
With Lemma \ref{dense}, Proposition \ref{universal prop}, and Lemma \ref{strong caratheodory} we can now prove Theorem \ref{universal theorem}. 

\begin{proof}[Proof of Theorem \ref{universal theorem}]
By the remarks preceding the proof of Proposition \ref{d+1 prop}, it suffices to show that $int(X)\subseteq X_{uni}$ for $\lambda$ sufficiently close to $1$. We prove this inclusion via induction on the dimension $d$ of the Euclidean space $\Phi$ is acting upon. Let us start with the case $d=1$.

Suppose $F \subset \mathbb{R}.$ Without loss of generality we may assume that $\p_0=\min F$ and $\p_n=\max F.$ Therefore $\p_0<\p_n$ and $X=\textrm{conv}(\{\p_0,\p_n\})$  for $\la$ sufficiently close to $1$. It follows from Theorem \ref{Feng thm}, Theorem \ref{EK} and a simple scaling argument, that if $\lambda\in(1.3247^{-1/2},1)$ then every $\x\in int(\textrm{conv}(\{\p_0,\p_n\}))$ has a universal coding for the digit set $\{0,n\}$. By Proposition \ref{universal prop} it follows that every $\x\in int(\textrm{conv}(\{\p_0,\p_n\}))=int(X)$ has a universal coding for our original digit set $\D$, and therefore $int(X)\subseteq X_{uni}$. This completes the proof when $d=1$.

Now let us assume our result holds for all $d< d^*$. We now show our result is true when $\Phi$ acts upon $\mathbb{R}^{d^*}$. Fix $\x\in int(X)$. Our strategy of proof will be to show that there exists $\a\in \D^*$ such that $T_{\a}(\x)\in int(\textrm{conv}(B))$, where $B\subset F$ consists of $d^*+1$ fixed points not contained in a $(d^*-1)$-dimensional affine subspace. Our result will then follow from Proposition \ref{universal prop}.

By Lemma \ref{strong caratheodory} there exists a set of $d^*+1$ extremal fixed points $B'\subseteq F$ such that $\x\in \textrm{conv}(B')$ and $B'$ is not contained in any $(d^*-1)$-dimensional affine subspace of $\mathbb{R}^{d^*}$. If  $\x\in int(\textrm{conv}(B'))$, then our result follows from Proposition \ref{universal prop}. Suppose not and assume $\x$ is contained in the boundary of $\textrm{conv}(B')$. In which case $\x$ is contained in the convex hull of $d^*$ elements from $B'.$ If $\x$ is in the interior of the convex hull of these $d^*$ elements we stop. Here the topology used to define the interior is that obtained by identifying the convex hull of these $d^*$ elements with a subset of $\mathbb{R}^{d^*-1}$. If $\x$ is not in the interior of the convex hull of these $d^*$ elements, then it must be contained in the convex hull of $d^*-1$ elements from $B'.$ If $\x$ is contained in the interior of the convex hull of these $d^*-1$ elements we stop. If not then $\x$ must be contained in the convex hull of $d^*-2$ elements from $B'$ and so on. Repeating this step must eventually yield $1\leq l\leq d^*-1$ such that $\x$ is contained in the interior of the convex hull of $l+1$ elements from $B'.$ For if not $\x$ would be in the convex hull of a single element of $B',$ and would therefore in fact equal an element of $B'$. This is not possible since each element of $B'$ is an extremal point of $X$ and $\x\in int(X)$. Summarising this argument, we may conclude that if $\x\notin int(\textrm{conv}(B'))$ there exists $1\leq l\leq d^*-1$ and $B_l'\subseteq B'$ such that $\#B_l'=l+1,$ $B_l'$ is not contained in any $(l-1)$-dimensional affine subspace,  and $\x\in int(\textrm{conv}(B_l')).$

The set $\textrm{conv}(B_l')$ is contained in a unique $l$-dimensional affine subspace of $\mathbb{R}^d$ that we denote by $W$. By elementary linear algebra, if $H$ is an $l'$-dimensional affine subspace where $l'\leq l$, it is the case that either $W=H,$ $W\cap H=\emptyset,$ or $W\cap H$ is an affine subspace of dimension strictly less than $l$. This means that if $A\subseteq F$ and $\dim(\textrm{conv}(A))\leq l,$ then one of the following options must hold: 
\[\textrm{conv}(A)\subseteq W,\quad \textrm{conv}(B_l')\cap\textrm{conv}(A)=\emptyset,\quad \textrm{or}\quad \dim(\textrm{conv}(B_l')\cap\textrm{conv}(A))<l.\]
 Here $\dim(Y)$ denotes the topological dimension of the smallest affine subspace containing $Y$ for $Y\subseteq \mathbb{R}^d$. It follows from these facts that if $\lambda$ is chosen to be sufficiently close to $1,$ in a way that depends only upon $F$, then there exists a compact subset $K$ contained in $int(\textrm{conv}(B_l')),$ a digit $i\in \D$, and $r>0$ such that the following properties hold:
\begin{enumerate}
	\item For all $\mathbf{y}\in K$ we have $$\Big(B(\mathbf{y},r)\setminus \textrm{conv}(B_l')\Big)\bigcap  \bigcup_{\substack{A\subseteq F\\ \dim(\textrm{conv}(A))\leq l}}  \textrm{conv}(A)=\emptyset.$$
	\item For all $\mathbf{y}\in K$ we have $$T_i(\mathbf{y})\in B(\mathbf{y},r)\setminus \textrm{conv}(B_l')$$  
	\item $int(K)\neq \emptyset.$
	\item For all $\y\in K$ we have $B(\mathbf{y},r)\subseteq int(X).$
\end{enumerate} 
Note that in item $(2)$ we can simply choose $i\in \D$ such that $\p_i\notin W$.

Since $\x\in int(\textrm{conv}(B_l'))$ and $l<d^*,$ we can apply our inductive hypothesis and Lemma \ref{dense} to assert that if $\lambda$ is sufficiently close to $1$ in a way that depends upon $B_l'$, then there exists a finite word $\a_0\in\cup_{j=0}^{\infty}\{i:\p_i\in B_l'\}^j$ such that $T_{\a_0}(\x)\in K$. Here we used the fact that $int(K)\neq \emptyset$. We then apply $T_{i}$ to $T_{\a_0}(\x)$, where $T_i$ is as in item $(2)$ above. It follows from items $(1)$ and $(4)$ that $T_i(T_{\a_0}(\x))\in int(X)$ and $T_i(T_{\a_0}(\x))\notin \textrm{conv}(A)$ for any $A\subseteq F$ such that $\dim(\textrm{conv}(A))\leq l$. We now apply Lemma \ref{strong caratheodory} again to assert that there exists a set of $d^*+1$ extremal  fixed points $B''\subseteq F$ such that $T_i(T_{\a_0}(\x))\in \textrm{conv}(B'')$ and $B''$ is not contained in a $(d^*-1)$-dimensional affine subspace. If $T_i(T_{\a_0}(\x))\in int( \textrm{conv}(B''))$ then we can apply Proposition \ref{universal prop} to complete our proof. If not, then $T_i(T_{\a_0}(\x))$ is contained in the boundary $\textrm{conv}(B'')$. Since $T_i(T_{\a_0}(\x))\notin \textrm{conv}(A)$ for any $A\subseteq F$ such that $\dim(\textrm{conv}(A))\leq l$, if $T_i(T_{\a_0}(\x))$ is contained in the boundary of $ \textrm{conv}(B'')$ and we repeat the argument given at the start of this proof, this argument will yield $l_1\geq l+1$ and $B_{l_1}''\subset B''$ such that $\#B_{l_1}''=l_1+1$, $B_{l_1}''$ is not contained in any $(l_1-1)$-dimensional affine subspace, and $T_i(T_{\a_0}(\x))\in int(\textrm{conv}(B_{l_1}'')).$ Otherwise we would have $T_i(T_{\a_0}(\x))\in \textrm{conv}(A)$ for some $A$ with $\dim(\textrm{conv}(A))\leq l$.

Repeating our previous arguments we can define a new compact subset $K$ contained in $int(\textrm{conv}(B_{l_1}'')),$ a digit $i\in \D,$ and $r>0$ such that properties analogous to $(1), (2),$ $(3)$ and $(4)$ hold for the set $B_{l_1}''$ when $\la$ is sufficiently close to $1$ in a way that depends only upon $F$. By an analogous argument to that following the statement of these properties, it follows that $T_i(T_{\a_0}(\x))$ can either be mapped into the interior of the convex hull of $d^*+1$ extremal fixed points that are not contained in any $(d^*-1)$-dimensional affine subspace, or $T_i(T_{\a_0}(\x))$ can be mapped into the interior of the convex hull of at least $l_2+1$ extremal fixed points that are not contained in any $(l_2-1)$-dimensional affine subspace, where $l_2\geq l_1+1$. In the first case we can apply Proposition \ref{universal prop} to complete our proof. If we are in the latter case and $T_i(T_{\a_0}(\x))$ has been mapped into the interior of the convex hull of $l_2+1$ extremal fixed points, we may again repeat the above step and define new analogues of $K$, $i$, and $r$. 

These steps cannot be repeated indefinitely. As such we may conclude that eventually either $\x$ is mapped into the interior of the convex hull of $d^*+1$ extremal fixed points that are not contained in any $(d^*-1)$-dimensional affine subspace, or $\x$ is mapped into the interior of the convex hull determined by $d^*$ extremal fixed points that are not contained in any $(d^*-2)$-dimensional affine subspace. In the former case we can apply Proposition \ref{universal prop} to complete our proof. In the latter case, repeating the above argument, we see that we can map this image of $\x$ outside of the convex hull of these $d^*$ fixed points in such a way that it is mapped into $int(X),$ and this new image of $\x$ is not contained in $\textrm{conv}(A)$ for any $A\subseteq F$ with $\dim(\textrm{conv}(A))\leq d^*-1$. Applying Lemma \ref{strong caratheodory} we see that $\x$ must have been mapped into the interior of the convex hull determined of $d^*+1$ extremal fixed points that are not contained in a $(d^*-1)$-dimensional affine subspace. In which case we can apply Proposition \ref{universal prop}. This completes our proof. 
\end{proof}

Theorem \ref{k normal theorem} now follows almost immediately from Theorem \ref{universal theorem} and Proposition \ref{prop1}.

\begin{proof}[Proof of Theorem \ref{k normal theorem}]
By the remarks preceding the proof of Proposition \ref{d+1 prop} it suffices to show that $int(X)\subset X_k$. Let $\delta_k:=\delta(k,F)>0$ be such that if $\lambda\in(1-\delta_k,1)$ then $X_{uni}=int(X)$ and $X_{k}$ contains an open dense subset of $X$. Such a $\delta_k$ exists by Theorem \ref{universal theorem} and Proposition \ref{prop1}. Let us call this open dense subset $O$. Fix $\x\in int(X).$ Then $\x$ has a universal coding. By Lemma \ref{dense} there exists $\a\in \D^{*}$ such that $T_{\a}(\x)\in O$. It follows from Lemma \ref{bijection lemma} and the fact that whether a sequence is $k$-simply normal does not depend on the initial block that $\x\in X_k$. Since $\x$ was arbitrary, this completes our proof. 
\end{proof}

\section{Final discussion}
\label{final discussion} Theorem \ref{k normal theorem} asserts that for any $F$ and $k\in\mathbb N$ there exists $\delta_k>0$ depending upon $F$ and $k$ such that if $\lambda\in(1-\delta_k,1),$ then $X_k=int(X)$. Similarly, Theorem \ref{universal theorem} asserts that for any $F$ there exists $\delta_{uni}>0$ depending upon $F$ such that if $\lambda\in(1-\delta_{uni},1),$ then $X_{uni}=int(X).$ We expect that one can reduce this dependence and conjecture that the following statements are true:
\begin{itemize}
	\item There exists $\delta'>0$ depending only upon $k\in\mathbb N$ and the dimension of the Euclidean space $\Phi$ acts upon such that if $\lambda\in(1-\delta',1),$ then $X_k=int(X)$.
	\item There exists $\delta''>0$ depending only upon the dimension of the Euclidean space $\Phi$ acts upon such that if $\lambda\in(1-\delta'',1),$ then $X_{uni}=int(X)$.
\end{itemize}Unfortunately, due to the delicate geometric arguments used in the proof of Theorem \ref{universal theorem} and the non effectiveness of Proposition \ref{prop1}, we are currently unable to provide a solution to either of these conjectures. Fortunately we can prove that both of these statements hold when $d=1$. 
\begin{theorem}
	\label{explicit universal thm}
	Assume $d=1$ and $\Phi$ is homogeneous. If $\lambda\in(2^{-1/2},1)$ and $\lambda^{-2}$ is not Pisot, then $int(X)=X_{uni}$. In particular if $\lambda\in(1.3247^{-1/2},1)$ then $int(X)=X_{uni}$.
\end{theorem}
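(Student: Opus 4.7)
The proof is essentially a one-dimensional application of Proposition \ref{universal prop}, once the hypotheses are matched up. I first record the standard reduction: by the remarks preceding the proof of Proposition \ref{d+1 prop}, the inclusion $X_{uni}\subseteq int(X)$ always holds, so it suffices to establish the reverse inclusion $int(X)\subseteq X_{uni}$.

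Without loss of generality, relabel the fixed points so that $\p_0=\min F$ and $\p_n=\max F$ (with $\p_0<\p_n$). Since $\lambda>2^{-1/2}>1/2$, in particular $\lambda_0+\lambda_n=2\lambda\ge 1$, so the hypothesis of Lemma \ref{no holes lemma 2} (with $d=1$) is verified and we obtain
\[
X=\textrm{conv}(F)=[\p_0,\p_n].
\]
Hence $int(X)=(\p_0,\p_n)=int(\textrm{conv}(B))$ for the two-element set $B:=\{\p_0,\p_n\}\subseteq F$. Note that $B$ consists of $d+1=2$ fixed points, and they are not contained in any $(d-1)$-dimensional affine subspace of $\mathbb{R}$ (i.e.\ they are distinct), so $B$ is a valid choice in the sense required by Proposition \ref{universal prop}.

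The hypotheses of Proposition \ref{universal prop} in the case $d=1$ read: $\lambda\in(2^{-1/2},1)$ and $\lambda^{-2}$ is not a Pisot number. These are exactly the hypotheses of the current theorem. Therefore, for every $\x\in int(\textrm{conv}(B))=int(X)$, Proposition \ref{universal prop} yields a universal coding of $\x$ with respect to the full digit set $\D$, giving $\x\in X_{uni}$. Combined with the reverse inclusion already noted, this proves $int(X)=X_{uni}$.

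For the "in particular" clause, recall that the smallest Pisot number is $x'=1.3247\ldots$, the real root of $x^3-x-1$. Thus any real number in $(1,1.3247\ldots)$ fails to be Pisot. If $\lambda\in(1.3247^{-1/2},1)$, then in particular $\lambda>2^{-1/2}$ (since $1.3247^{-1/2}>2^{-1/2}$ because $1.3247<2$), and $\lambda^{-2}\in(1,1.3247\ldots)$, so $\lambda^{-2}$ is not Pisot. The first part of the theorem then applies to yield $int(X)=X_{uni}$. Since all the ingredients (Lemma \ref{no holes lemma 2} and Proposition \ref{universal prop}) are already established, there is no real obstacle here; the statement is essentially a clean packaging of Proposition \ref{universal prop} in one dimension, and no separate delicate argument is needed.
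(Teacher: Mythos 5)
Your proof is correct and takes essentially the approach the paper has in mind: reduce to $int(X)\subseteq X_{uni}$, verify $X=\textrm{conv}(F)$ via Lemma \ref{no holes lemma 2}, and apply Proposition \ref{universal prop} with $B=\{\p_0,\p_n\}$, whose hypotheses in dimension $d=1$ coincide exactly with those of the theorem. The paper's citation of Theorem \ref{Feng thm} and Theorem \ref{EK} refers to the ingredients already baked into Proposition \ref{universal prop} via Proposition \ref{d+1 prop}, so your streamlined invocation of Proposition \ref{universal prop} alone is the same route with the redundancy removed.
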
 Theorem \ref{explicit universal thm} is a consequence of Theorem \ref{Feng thm}, Theorem \ref{EK}, and Proposition \ref{universal prop}. We leave the details to the interested reader.

\begin{theorem}
	\label{explicit thm}
	Assume $d=1,$ $k\geq 1$ and $\Phi$ is homogeneous. Then for any $\lambda \in (\max\{(\frac{1}{2})^{\frac{1}{k\cdot (n+1)^k}},1.3247^{-1/2}\},1)$ we have $X_{k}=int(X)$.
\end{theorem}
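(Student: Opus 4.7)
The plan is to combine Theorem \ref{d=1 theorem} with Theorem \ref{explicit universal thm}, following the same template as the proof of Theorem \ref{k normal theorem} but keeping the thresholds explicit. In the homogeneous $d=1$ setting, the condition $(\lambda,\ldots,\lambda)\in\mathcal{P}_k$ is equivalent to $\lambda\in\mathcal{M}_k=[(1/2)^{1/(k\cdot(n+1)^k)},1)$. Hence for $\lambda>(1/2)^{1/(k\cdot(n+1)^k)}$ the set $X_k$ contains an open dense subset $O$ of $X$ by Theorem \ref{d=1 theorem}. Separately, for $\lambda>1.3247^{-1/2}$ Theorem \ref{explicit universal thm} guarantees $X_{uni}=int(X)$. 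Both hypotheses are simultaneously in force exactly when $\lambda$ exceeds the maximum of the two thresholds appearing in the statement.

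Next I would fix an arbitrary $\x\in int(X)$ and exploit universality. By the previous paragraph $\x$ admits a universal coding $\a$, and by Lemma \ref{dense} the orbit $\{T_{a_1\ldots a_j}(\x):j\geq 1\}$ is dense in $X$. Since $O$ is open and non-empty there exists $j_0$ with $T_{a_1\ldots a_{j_0}}(\x)\in O\subseteq X_k$, so the image point has some $k$-simply normal coding $\b$. By Lemma \ref{bijection lemma} the concatenation $a_1\ldots a_{j_0}\b$ lies in $\Sigma_{\Phi}(\x)$, and because $k$-simple normality of a sequence is unaffected by prepending a finite word, this concatenation is itself $k$-simply normal. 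Thus $\x\in X_k$, giving the inclusion $int(X)\subseteq X_k$. The reverse inclusion $X_k\subseteq int(X)$ is the content of the remarks preceding the proof of Proposition \ref{d+1 prop}, using that $F$ is not contained in a lower-dimensional affine subspace.

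No genuine obstacle arises: the theorem is essentially a quantitative packaging of the proof of Theorem \ref{k normal theorem} in the one-dimensional homogeneous case. The qualitative constant $\delta_{uni}(F)$ is replaced by the explicit value $1-1.3247^{-1/2}$ coming from Feng's theorem (via Theorem \ref{Feng thm} and Theorem \ref{EK}), and the qualitative constant $\delta_k'(F)$ is replaced by the explicit value $1-(1/2)^{1/(k\cdot(n+1)^k)}$ coming from the convexity criterion of Lemma \ref{no holes lemma 2} applied to the induced IFS $\Phi_k$. The only mild point to keep in mind is that one must verify that both lower bounds hold simultaneously, which is precisely what the $\max$ in the statement enforces; after that, the dynamical matching argument via the universal coding transfers a $k$-simply normal coding from a point in $O$ to the arbitrary point $\x\in int(X)$.
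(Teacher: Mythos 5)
Your proof is correct and follows the same route as the paper: instantiate Theorem \ref{d=1 theorem} (via $\mathcal{M}_k$) and Theorem \ref{explicit universal thm} at their explicit thresholds, take the maximum, and then run the universality-plus-density argument from the proof of Theorem \ref{k normal theorem} to transport a $k$-simply normal coding from the open dense set $O$ to an arbitrary $\x\in int(X)$. The paper's own proof is just a compressed version of what you wrote.
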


\begin{proof}
Write $\la_k:=\max\{(\frac{1}{2})^{\frac{1}{k\cdot (n+1)^k}},1.3247^{-1/2}\}$. By Theorem \ref{d=1 theorem} we know that for any $\lambda\in (\la_k,1)$ the set $X_k$ contains an open dense subset. By Theorem \ref{explicit universal thm} we know that for $\lambda\in (\la_k,1)$ we have $int(X)=X_{uni}$. Making use of Lemma \ref{dense} we can now argue as in the proof of Theorem \ref{k normal theorem} to show that $X_k=int(X)$.
\end{proof}

We can extend Theorem \ref{explicit universal thm} under an additional assumption to higher dimensions. The following theorem is an immediate corollary of Proposition \ref{universal prop}.

\begin{theorem}
	\label{explicit universal thm2}
	Assume $\Phi$ is homogeneous,  $\lambda\in(2^{-1/2d},1)$ and $\lambda^{-2d}$ is not a Pisot number. If every $\x\in int(X)$ is in the interior of $\textrm{conv}(B)$ for some $B$ consisting of $d+1$ fixed points that are not contained in any $(d-1)$-dimensional affine subspace, then $int(X)=X_{uni}$. 
\end{theorem}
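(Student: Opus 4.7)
The plan is to observe that this is essentially a direct application of Proposition \ref{universal prop}, so the proof reduces to combining two inclusions.

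First, I would recall the automatic inclusion $X_{uni}\subseteq int(X)$, which is explained in the remarks preceding the proof of Proposition \ref{d+1 prop}: for $\lambda$ close enough to $1$ we have $X=\textrm{conv}(F)$ by Lemma \ref{no holes lemma 2}, and any $\x$ on the boundary of $\textrm{conv}(F)$ lies in some bounding hyperplane $V$ of dimension $d-1$; since $F$ is not contained in any $(d-1)$-dimensional affine subspace, some $\p_i\in F$ lies outside $V$, and then $\x$ cannot have any coding that uses the digit $i$, in particular no universal coding. Since $2^{-1/2d}>d/(d+1)$, this applies under our hypothesis on $\lambda$.

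For the reverse inclusion $int(X)\subseteq X_{uni}$, I would simply fix an arbitrary $\x\in int(X)$. The added hypothesis of the theorem provides a subset $B\subseteq F$ with $\#B=d+1$, not contained in any $(d-1)$-dimensional affine subspace, such that $\x\in int(\textrm{conv}(B))$. Since $\lambda\in(2^{-1/2d},1)$ and $\lambda^{-2d}$ is not a Pisot number, the hypotheses of Proposition \ref{universal prop} are met for this choice of $B$, and that proposition gives $\x\in X_{uni}$ directly. Combining the two inclusions yields $int(X)=X_{uni}$.

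There is no real obstacle here: the delicate combinatorial construction has already been carried out in the proof of Proposition \ref{universal prop}, and the extra hypothesis of Theorem \ref{explicit universal thm2} is exactly what is needed to bypass the inductive descent on dimension and the Krein--Milman/Caratheodory machinery used in the proof of Theorem \ref{universal theorem}. In particular, no appeal to Lemma \ref{strong caratheodory} is required, since the theorem's hypothesis furnishes the required $(d+1)$-element subset $B$ for free.
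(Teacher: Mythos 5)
Your proposal is correct and matches the paper exactly: the paper states that Theorem \ref{explicit universal thm2} is an immediate corollary of Proposition \ref{universal prop}, which is precisely your argument once the automatic inclusion $X_{uni}\subseteq int(X)$ (from the remarks preceding Proposition \ref{d+1 prop}) is invoked. Your observation that $2^{-1/2d}>d/(d+1)$ guarantees $X=\textrm{conv}(F)$ via Lemma \ref{no holes lemma 2} is also the same justification the paper uses.
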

We emphasise here that there are examples of $X$ such that there exists $\x\in int(X)$ and $\x$ is not in the interior of $\textrm{conv}(B)$ for any $B$ consisting of $d+1$ fixed points. Consider the case where $F=\{(0,0),(1,0),(0,1),(1,1)\},$ $X=[0,1]\times [0,1],$ and  $\x=(1/2,1/2).$ As an application of Theorem \ref{explicit universal thm2} we consider the following example.

\begin{example}
Let $\{\p_0,\ldots,\p_5\}$ be the vertices of a regular hexagon $X$. Then for any $\lambda\in(2^{-1/4},1)$ such that $\lambda^{-4}$ is not a Pisot number we have $int(X)=X_{uni}$. We can verify that $X$ satisfies the remaining hypothesis of Theorem \ref{explicit universal thm2} by inspection of Figure \ref{fig1}.

\begin{figure}[ht]
	\centering
	\begin{tikzpicture}[x=3.5,y=3.5]
	\path[draw][thick](0,0) -- (40,0) -- (60, 34.64) -- (40, 69.28) -- (0,69.28) -- (-20,34.64) -- (0,0);
	\path[draw][dashed](0,0)--(60,34.64);
	\path[draw][dashed](0,0)--(40,69.28);
	\path[draw][dashed](0,0)--(0,69.28);
	\path[draw][dashed](40,0)--(-20,34.64);
	\path[draw][dashed](40,0)--(0,69.28);
	\path[draw][dashed](40,0)--(40,69.28);
	\path[draw][dashed](60, 34.64)--(-20, 34.64);
	\path[draw][dashed](60, 34.64)--(0, 69.28);
	\path[draw][dashed](40,69.28)--(-20, 34.64);
	\end{tikzpicture}
	\caption{Each $\x\in int(X)$ is contained in $int(\textrm{conv}(B))$ for some $B$ consisting of three vertices of $X$. }
	\label{fig1}
\end{figure}
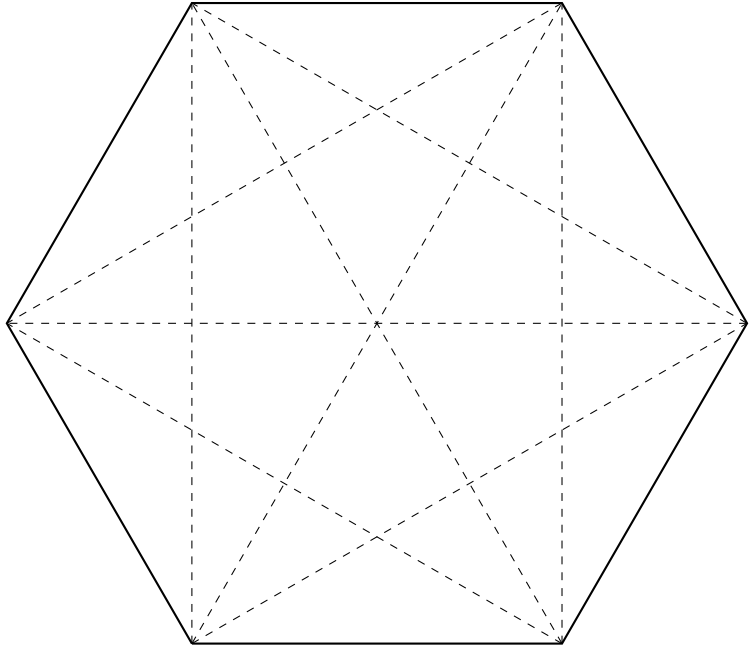
\end{example}

As an application of Theorem \ref{explicit thm} we consider the $q$-expansions studied by Erd\H{o}s and Komornik.

\begin{example}
	Let $q\in (1,2)$. Then for every $x\in [0,\frac{1}{q-1}]$ there exists  $\a\in\{0,1\}^{\mathbb{N}}$ such that $$x=\sum_{i=1}^{\infty}\frac{a_i}{q^i}.$$ Recall that such an $\a$ is called a $q$-expansion of $x$. A sequence $\a$ is a $q$-expansion of $x$ if and only if $\a$ is a coding for $x$ for the IFS $\{\frac{x}{q},\frac{x+1}{q}\}$. Theorem \ref{explicit thm} doesn't immediately apply to this IFS since for this family of IFSs the fixed points vary. However, by a straightforward scaling argument this issue can be overcome and one can prove that if $q\in(1, \min\{2^{\frac{1}{k\cdot 2^k}},1.3248^{1/2}\})$, then every $x\in(0,\frac{1}{q-1})$ has a $k$-normal $q$-expansion. We include a table of values for $\min\{2^{\frac{1}{k\cdot 2^k}},1.3248^{1/2}\}$ for $k\geq 2$ in Figure \ref{fig2}. The optimal parameter space of $q$ for which every $x\in(0,\frac{1}{q-1})$ has a $1$-normal $q$-expansion was determined in \cite{BakD,BakKong}. 
	
\begin{figure}
	\centering
	\begin{tabular}{ |c|c| } 
		\hline
		k & $\min\{2^{\frac{1}{k\cdot 2^k}}, 1.3248^{1/2}\}$  \\
		\hline
		2 & $ 1.0905\ldots$ \\ 
		3 & $1.0293\ldots$ \\ 
		4 & $1.0109\ldots$ \\ 
		5 & $1.0043\ldots$  \\
		6 & $1.0018\ldots$ \\
		7& $1.0008\ldots$ \\
		8& $1.0003\ldots$ \\
		9& $1.0001\ldots$ \\
		\hline
	\end{tabular}
	\caption{A table of values for $\min\{2^{\frac{1}{k\cdot 2^k}}, 1.3248^{1/2}\}$ .}
	\label{fig2}
\end{figure}	
	
\end{example}

It would be interesting to know how optimal the parameter space appearing in Theorem \ref{explicit thm} is. With that in mind we introduce the following, for each $k\geq 1$ and $n\geq 1$ let $$C(k,n):=\sup\{\delta:\textrm{ If } \lambda\in(1-\delta,1) \textrm{ then }X_k=int(X) \textrm{ for any }F\subseteq \mathbb{R} \textrm{ such that }\#F=n+1\}.$$ By Theorem \ref{explicit thm} we know that $C(k,n)\geq 1- \max\{(\frac{1}{2})^{\frac{1}{k\cdot (n+1)^k}},1.3247^{-1/2}\}$. Because of the $(n+1)^{-k}$ term appearing in the exponent of $1/2,$ the right hand side converges to zero very quickly (see Figure \ref{fig2}). It would be interesting to determine whether one could prove that $C(k,n)$ accumulates to zero at a significantly slower rate. More interesting still would be to determine whether in fact $C(k,n)$ decays to zero at all. This gives rise to the following conjectures which we state in arbitrary dimensions:

\begin{itemize}
	\item There exists $\delta_{nor}$ depending only on $d$ such that if $\lambda\in(1-\delta_{nor},1),$ then every $\x\in int(X)$ has a normal coding.
	\item For any $F$ there exists $\delta_{nor}:=\delta_{nor}(F)$ such that if $\lambda\in(1-\delta_{nor},1),$ then every $\x\in int(X)$ has a normal coding.
\end{itemize}
Recall that a coding is normal if it is $k$-simply normal for all $k$. Clearly the second conjecture is weaker than the first. We include it for completion.

It would also be interesting to construct a specific IFS for which every $\x\in int(X)$ had a normal coding. Progress with any of these problems seems well out of reach of our current methods. 

Theorems \ref{k normal theorem} and \ref{universal theorem} are phrased for homogeneous IFSs. One should expect that analogous results hold when our IFS has different rates of contraction. The main difficulty in proving such a result is proving an appropriate analogue of Proposition \ref{d+1 prop}. This proposition relies heavily on the fact the IFS is homogeneous. 

The results of this paper were phrased for IFSs where every similitude was of the form described by \eqref{similarity}. A general similitude can be expressed as $S=\lambda\cdot O+\mathbf{t},$ where $\lambda\in(0,1)$, $O$ is a $d\times d$ orthogonal matrix, and $\mathbf{t}\in\mathbb{R}^d.$ In our results $\Phi$ always consisted of similarities $\{S_i\}$ where the orthogonal matrix appearing in this decomposition was the identity. It would be interesting to extend the results of this paper to allow for non-trivial orthogonal matrices.

\section*{Acknowledgments}

The authors were supported by an LMS Scheme 4 grant. The first author was supported by EPSRC grant EP/M001903/1. The second author was supported by NSFC No. 11401516. He would like to thank the Mathematical Institute of Leiden University.

\end{document}